\title{On the packing dimension of unions and extensions of $k$-planes}
\author{Jacob B. Fiedler}
\address{Department of Mathematics, University of Wisconsin-Madison, Wisconsin 53715}
\email{jbfiedler2@wisc.edu}
\thanks{The author was supported in part by NSF DMS-2037851 and NSF DMS-2246906.}
\subjclass[2020]{28A78, 28A80, 68Q30}	
\newtheorem{thm}{Theorem}
\newtheorem{lem}[thm]{Lemma}
\newtheorem{prop}[thm]{Proposition}
\newtheorem{cor}[thm]{Corollary}
\theoremstyle{remark}
\newtheorem*{remark}{Remark}
\newtheorem*{T1}{Theorem~\ref{thm:planeUnion}}
\newtheorem*{T2}{Theorem~\ref{thm:purePackingBound}}
\newtheorem*{T3}
{Theorem~\ref{thm:basicExtension}}
\newtheorem*{P4}
{Proposition~\ref{prop:trivialRadialSlice}}
\newtheorem*{T5}
{Theorem~\ref{thm:optimalExtension}}
\newtheorem*{T6}
{Theorem~\ref{thm:unionHyperplanes}}
\DeclareMathOperator{\Dim}{Dim}
\newcommand{\R}{\mathbb{R}}
\newcommand{\N}{\mathbb{N}}
\newcommand{\Q}{\mathbb{Q}}
\newcommand{\ve}{\varepsilon}
\newenvironment{proofof}[1]{\begin{trivlist}
		\item[\hskip \labelsep \textit{Proof of #1.}]}{\end{trivlist}}
\begin{document}
\begin{abstract}
    We study the packing dimension of unions of subsets of $k$-planes in $\mathbb{R}^n$ using tools from algorithmic information theory, obtaining an analog of a result of H\'era and a mild generalization of a recent result of Fraser. Along the way, we introduce a notion of effective dimension on the Grassmannian and affine Grassmannian, and we establish several useful algorithmic and geometric tools in this setting. Additionally, we consider how the packing dimension of the union of certain subsets of $k$-planes changes when the subsets are extended to the entire $k$-plane. Finally, we improve the above bounds for unions and extensions in the special case that $k=n-1$.
\end{abstract}
\maketitle

\section{Introduction}

Many problems in geometric measure theory are particular instances of the following general (and somewhat vague) question: 

\noindent \textbf{Question:} How large are unions of subsets of $k$-planes in $\mathbb{R}^n$? More specifically, assuming 
\begin{equation*}   F=\bigcup_{P\in\mathcal{P}}P^\prime
\end{equation*}
where $\mathcal{P}$ is a set of $k$-planes and each $P^\prime\subseteq P$, how can one bound the size of $F$ in terms of the size of $\mathcal{P}$ and some lower bound on the size of each $\mathcal{P}^\prime$?

The conjectures arising from specifying the notions of size involved include:

\begin{itemize}
    \item If $k=1$, $\mathcal{P}$ is large enough that its elements include a line in every direction, and each $\mathcal{P}^\prime$ contains a unit line segment, then the Kakeya conjecture (recently resolved in $\mathbb{R}^3$ by Wang and Zahl \cite{wang2025volumeestimatesunionsconvex}) is that $F$ has Hausdorff dimension $n$.
    \item If $k\geq 2$, $\mathcal{P}$ is large enough that its elements include a translate of every $k$-plane passing through the origin, and each $P^\prime$ contains a unit disc, the $(n, k)$-Besicovitch conjecture is that $F$ has positive measure. 
    \item In $\mathbb{R}^2$, if $\mathcal{P}$ is a set of affine lines with Hausdorff dimension at least $t$, and each $P^\prime$ has Hausdorff dimension at least $s$, the Furstenberg set conjecture (resolved by Orponen-Shmerkin and Ren-Wang \cite{orponen2023projectionsfurstenbergsetsabc, ren2023furstenberg}) asserted that 
    \begin{equation*}
        \dim_H(F)\geq \min\{s+t, s+1, \dfrac{3s+t}{2}\}.
    \end{equation*}
\end{itemize}
In this paper, we consider bounds of this general type that involve \emph{packing} dimension. 

\subsection{Unions of subsets of $k$-planes}

Throughout, $\mathcal{G}(n, k)$ will denote the Grassmannian and $\mathcal{A}(n, k)$ will denote the affine Grassmannian. One of our main theorems is the following Furstenberg-type bound: 

\begin{thm}\label{thm:planeUnion}
    Suppose $\mathcal{P}\subseteq\mathcal{A}(n, k)$ has packing dimension $t>0$. Let $F=\bigcup_{P\in\mathcal{P}}P^\prime$, where each $P^\prime\subseteq P$ has Hausdorff dimension at least $s>0$. Then,
    \begin{equation*}
        \dim_P(F)\geq s + \frac{t - (k - \lceil s\rceil)(n-k)}{\lceil s\rceil +1}.
    \end{equation*}
    In particular, the union of a packing dimension $t$ set of $k$-planes has packing dimension at least $\frac{t}{k+1} + k$.
\end{thm}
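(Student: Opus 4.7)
The plan is to apply the point-to-set principle for packing dimension together with the effective dimension framework on $\mathcal{A}(n, k)$ developed in the preceding sections. Write $\ell = \lceil s \rceil$. By the point-to-set principle it will suffice to show that for every oracle $A$ and every $\epsilon > 0$ there is a point $x \in F$ with effective packing dimension
\[
\Dim^A(x) \;\geq\; s + \frac{t - (k - \ell)(n - k)}{\ell + 1} - \epsilon.
\]

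Fix such an $A$. Since $\dim_P(\mathcal{P}) = t$, the effective point-to-set principle on $\mathcal{A}(n, k)$ will produce a plane $P \in \mathcal{P}$ with $\Dim^A(P) \geq t - \epsilon$, so that $K_r^A(P) \geq (t - \epsilon) r$ at infinitely many precisions $r$. Relativizing to the oracle $(A, P)$ and applying the Hausdorff-dimension point-to-set principle to $P'$ then produces $x \in P' \subseteq F$ with $K_r^{(A, P)}(x) \geq (s - \epsilon)\, r$ at all sufficiently large $r$.

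The remaining task is to convert these two complexity bounds into a $\limsup$ lower bound on $K_r^A(x)/r$. The key lemma I would aim for is a complexity inequality of the rough form
\[
K_r^A(x) \;\geq\; (s - \epsilon)\, r \;+\; \frac{K_r^A(P) - (k - \ell)(n - k)\, r}{\ell + 1} \;-\; o(r),
\]
valid at the scales where $K_r^A(P)$ is large. Heuristically, the first summand on the right reflects the within-plane complexity of $x$, while the second comes from a chain-rule/pivot-point argument exploiting the $(k - \ell)(n - k)$-dimensional Grassmannian of $k$-planes containing a fixed $\ell$-plane; the factor $\ell + 1$ arises because an $\ell$-plane is determined by $\ell + 1$ of its points. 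Evaluated at good scales and pushed through $\limsup$, the inequality yields $\Dim^A(x) \geq s + (t - (k - \ell)(n - k))/(\ell + 1) - O(\epsilon)$, and letting $\epsilon \to 0$ completes the proof.

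The main obstacle will be establishing this inequality with the two lower bounds \emph{adding} rather than merely maximizing: a naive pivot-point count gives only $K_r^A(x) \geq \max\{(s - \epsilon) r,\, (K_r^A(P) - (k - \ell)(n - k) r)/(\ell + 1)\}$, losing up to a factor of two in the final exponent. Extracting additivity will require a careful decomposition separating the ``within $P$'' component of $x$'s complexity from the ``which $P$'' component, presumably via the effective symmetry-of-information identity on $\mathcal{A}(n,k)$ combined with a Lutz--Stull-style optimal-oracle argument at each scale. The ``in particular'' claim then follows by setting $s = k$: since $\lceil s \rceil = k$ and $(k - \lceil s \rceil)(n - k) = 0$, the bound becomes $k + t/(k + 1)$.
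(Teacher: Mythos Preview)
Your high-level strategy and the ingredients you identify (the $(k-\ell)(n-k)$-dimensional family of $k$-planes through a fixed $\ell$-plane, the fact that $\ell+1$ points determine an $\ell$-plane) are exactly right, but the ``key lemma'' as you have stated it is not provable, and this is a genuine gap rather than a technical obstacle.

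The issue is that you commit to a \emph{single} point $x\in P'$ (chosen only so that $K_r^{A,P}(x)\geq (s-\epsilon)r$) and then ask for
\[
K_r^A(x)\;\geq\;(s-\epsilon)\,r+\frac{K_r^A(P)-(k-\ell)(n-k)\,r}{\ell+1}.
\]
A single point on $P$ constrains $P$ only to the $k(n-k)$-dimensional family of $k$-planes through it, so the best symmetry-of-information gives is $K_r^A(x)\geq (s-\epsilon)r+K_r^A(P)-k(n-k)r$, which is strictly weaker than your target whenever $K_r^A(P)<(k+1)(n-k)r$. No ``optimal-oracle'' trick will recover the missing strength: the divisor $\ell+1$ is there precisely because one must spread the information in $P$ across $\ell+1$ points of $P'$, not one. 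The heuristic you mention (``an $\ell$-plane is determined by $\ell+1$ of its points'') cannot be cashed in for a single pre-chosen $x$.

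The paper's proof does not try to exhibit one high-complexity point directly. Instead it fixes a packing oracle $A$ for \emph{both} $F$ and $\mathcal P$, chooses $\ell+1$ points $p_0,\dots,p_\ell\in P'$ spanning an $\ell$-plane $P_s\subset P$ with each $\pi_e p_i$ nearly $(s-\epsilon)$-random relative to $(A,P,p_0,\dots,p_{i-1})$, and pads out with $k-\ell$ further points in $(P-p_0)\cap P_s^\perp$. Lemma~\ref{lem:pointsDeterminePlanes} then gives $K_r^A(P,p_0,\dots,p_k)\leq\sum_{i\leq\ell}K_r^A(p_i)+(k-\ell)(n-\ell)r$, while symmetry of information gives $K_r^A(P,p_0,\dots,p_k)\geq K_r^A(P)+(\ell+1)(s-\epsilon)r+(k-\ell)^2 r$. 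Using $K_r^A(p_i)\leq(\dim_P(F)+\epsilon)r$ (this is where the packing-oracle-for-$F$ hypothesis is spent) and comparing yields the bound on $\dim_P(F)$. If you prefer your ``find a witness for every $A$'' framing, the same two inequalities give $\sum_{i\leq\ell}K_r^A(p_i)\geq K_r^A(P)+(\ell+1)(s-\epsilon)r-(k-\ell)(n-k)r$, and a pigeonhole over the infinitely many scales at which $K_r^A(P)\geq(t-\epsilon)r$ produces one fixed $p_{i_0}$ with $\Dim^A(p_{i_0})$ at least the claimed bound. Either way, the argument is irreducibly about $\ell+1$ points, not one.
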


This is a packing dimension analog of a result of H\'era \cite{Hera2019}, who proved the same bound but with both instances of packing dimension replaced by Hausdorff dimension. Note that for all $E\subseteq\mathbb{R}^n$, $\dim_P(E)\geq \dim_H(E)$. Hence, the novelty in our result is that we bound the packing dimension of the union in terms of the packing dimension of the set of $k$-planes; both our hypothesis and our conclusion are weaker than in H\'era's result. 

H\'era constructed several examples illustrating the sharpness of her bound for certain values of $n, k, s,$ and $t$. In particular, the example given by Proposition 1.13 in \cite{Hera2019} shows that for unions of $k$-planes, the bound $\frac{t}{k+1} + k$ is sharp when $t=i(k+1)$ for $i\in\mathbb{N}$ and is in general close to sharp. We note that this proposition makes no use of any property of Hausdorff dimension that packing dimension lacks, hence its packing dimension analog implies that Theorem \ref{thm:planeUnion} is sharp at the same values. 

For other values of $t$, H\'era conjectured that her example was optimal, generalizing a conjecture of D. Oberlin \cite{doberlin2014kplanes}. This conjecture was resolved in the case of lines by Zahl \cite{zahl2022} and for general $k$ by Gan \cite{gan2023hausdorffdimensionunionskplanes}, but we are unaware of any results on the packing dimension analog of H\'era's conjecture (aside from Theorem \ref{thm:planeUnion}). 

One feature of Theorem \ref{thm:planeUnion} is that it is not a ``pure'' packing dimension bound, in that we consider the \emph{Hausdorff} dimension of subsets of each $k$-plane. In fact, the version of Theorem \ref{thm:planeUnion} that stipulates each $P^\prime$ has packing dimension at least $s$ is false; Fraser gave examples in the case $k=1$ \cite{fraser2024variantsfurstenbergsetproblem}. The reason for this discrepancy is that, intuitively, packing dimension measures the size of a set at the scales at which the set appears the largest, whereas Hausdorff dimension does so for the scales at which it appears the smallest. Since the scales at which the set of $k$-planes is large may differ from the scales at which the subsets of $k$-planes are large, the two packing dimensions may fail to add. Fraser considered this problem for lines in $\mathbb{R}^n$ \cite{fraser2024variantsfurstenbergsetproblem}. In particular, he proved that given $E\subseteq\mathbb{R}^n$, if there is a collection of lines $\mathcal{L}$ of packing dimension $t$ such that each $L\in\mathcal{L}$ intersects $E$ in a set of packing dimension $s$, then
\begin{equation*}
    \dim_P (E) \geq \max\{s, t/2\}.
\end{equation*}
Furthermore, Fraser gave an example showing that this bound is sharp. Our next theorem is a generalization of this lower bound from lines to $k$-planes.

\begin{thm}\label{thm:purePackingBound}
        Suppose $\mathcal{P}\subseteq\mathcal{A}(n, k)$ has packing dimension $t>0$. Let $F=\bigcup_{P\in\mathcal{P}}P^\prime$, where each $P^\prime\subseteq P$ has packing dimension at least $s>0$. Then,
    \begin{equation*}
        \dim_P(F)\geq \max\{s, \frac{t- (k-\lceil s\rceil)(n-k)}{\lceil s\rceil +1}\}.
    \end{equation*}
\end{thm}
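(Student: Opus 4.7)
The lower bound $\dim_P(F) \ge s$ is immediate: $F$ contains each $P'$, and $\dim_P(P') \ge s$ by hypothesis. The substance of the theorem is therefore the structural bound
\begin{equation*}
    \dim_P(F) \ge \frac{t - (k - \lceil s \rceil)(n-k)}{\lceil s \rceil + 1},
\end{equation*}
which I plan to establish by adapting the effective-dimension proof of Theorem~\ref{thm:planeUnion} to the situation in which only the packing (not the Hausdorff) dimension of each $P'$ is known.

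By the point-to-set principle for packing dimension, it suffices to show that for every oracle $A$ there is a point $x \in F$ with $\Dim^A(x)$ at least the claimed quantity. Given such an $A$, I apply point-to-set to $\mathcal{P}$ to produce $P \in \mathcal{P}$ with $\Dim^A(P) \ge t - \varepsilon$, giving an infinite sequence of scales $r_i \to \infty$ along which $K^A_{r_i}(P) \ge (t - \varepsilon) r_i - o(r_i)$. Relativizing to the oracle $A \oplus P$ and applying point-to-set to $P'$, I obtain $x \in P'$ with $\Dim^{A \oplus P}(x) \ge s - \varepsilon$, i.e., $K^{A \oplus P}_{r'_j}(x) \ge (s - \varepsilon) r'_j - o(r'_j)$ on some infinite scale set $\{r'_j\}$. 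The heart of the argument is then to estimate $K^A_{r_i}(x)$ at the packing-$t$ scales of $P$: a symmetry-of-information inequality together with the algorithmic slicing and projection tools on the affine Grassmannian developed earlier in the paper should yield an estimate of the form
\begin{equation*}
    K^A_{r_i}(x) \ge \frac{K^A_{r_i}(P) - (k - \lceil s \rceil)(n-k) r_i}{\lceil s \rceil + 1} - o(r_i),
\end{equation*}
where the $\lceil s \rceil$ in the denominator arises because the packing-$s$ hypothesis on $P'$ forces $x$ to lie in (or near) a generic $\lceil s \rceil$-dimensional affine slice of $P$, and the $(k - \lceil s \rceil)(n-k)$ correction accounts for the fiber dimension of such slices within $\mathcal{A}(n, k)$. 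Inserting $K^A_{r_i}(P) \ge (t - \varepsilon) r_i - o(r_i)$ and letting $\varepsilon \to 0$ gives the desired lower bound on $\Dim^A(x)$, and hence on $\dim_P(F)$.

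The main obstacle is the scale-mismatch between the good scales $\{r_i\}$ of $P$ and the good scales $\{r'_j\}$ of $x$: these need not intersect, whereas in the Hausdorff setting of Theorem~\ref{thm:planeUnion} the lower bound on $x$'s effective complexity holds at \emph{every} sufficiently large scale and so contributes an additional $+s$ to the final bound. I plan to handle this by encoding the good scales of $P$ into the oracle used when applying point-to-set to $P'$, so that the $x$ that is produced has its complexity controlled at infinitely many of the $r_i$. This approach recovers the structural term but not the additive $s$, which is precisely why Theorem~\ref{thm:purePackingBound} takes the $\max$ with the trivial $s$-bound rather than summing the two contributions, and explains the sharpness of Fraser's examples in the $k=1$ case.
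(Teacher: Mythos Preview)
Your central inequality
\[
K^A_{r_i}(x) \ge \frac{K^A_{r_i}(P) - (k - \lceil s \rceil)(n-k) r_i}{\lceil s \rceil + 1} - o(r_i)
\]
for a \emph{single} point $x\in P'$ is unjustified, and in fact cannot hold in general. One point on a $k$-plane does not determine the plane, so there is no mechanism by which the complexity of $P$ must be reflected in $K^A_r(x)$; the symmetry of information gives $K^A_r(x)=K^A_r(P)+K^A_r(x\mid P)-K^A_r(P\mid x)$, and the subtracted term can be as large as $k(n-k)r$ (the dimension of the family of $k$-planes through $x$), swamping any gain. Your heuristic that ``the packing-$s$ hypothesis forces $x$ to lie in a generic $\lceil s\rceil$-dimensional slice'' does not translate into a complexity statement about a single $x$.

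The paper's argument uses the packing-$s$ hypothesis in a purely qualitative way: since $\dim_P(P')\ge s>\lceil s\rceil-1$, the set $P'$ cannot lie in any $(\lceil s\rceil-1)$-dimensional affine flat, so one can choose $\lceil s\rceil+1$ affinely independent points $p_0,\dots,p_{\lceil s\rceil}\in P'\subseteq F$. These, together with $k-\lceil s\rceil$ auxiliary points chosen in $(P-p_0)\cap P_s^\perp$ with high complexity relative to everything prior, determine $P$ via Lemma~\ref{lem:pointsDeterminePlanes}. Taking $A$ to be a packing oracle for \emph{both} $F$ and $\mathcal P$, each $p_j\in F$ satisfies $K^A_r(p_j)\le(\dim_P(F)+\varepsilon)r$ at \emph{every} large $r$, while each auxiliary point contributes at most $(n-\lceil s\rceil)r$ given $P_s$ (Lemma~\ref{lem:pointsOnAPlane}) and at least $(k-\lceil s\rceil)r$ of fresh information. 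Comparing the resulting upper and lower bounds on $K^A_r(P,p_{\lceil s\rceil+1},\dots,p_k)$ at the good scales of $P$ yields the theorem. Note that there is no scale mismatch here: the hypothesis on $P'$ is never used quantitatively at any scale, only to guarantee the spanning property, so your proposed encoding of scales into the oracle is unnecessary once the argument is set up correctly.
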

For the purpose of completeness, we note that we actually prove slightly stronger results than Theorem \ref{thm:planeUnion} and Theorem \ref{thm:purePackingBound} as stated above. If we are able to make additional non-concentration assumptions on the $P^\prime\subset P$, then the lower bounds potentially improve. These conditions and improvements are discussed in the remark at the end of Section 4. 

Now, we briefly describe some of the history of this problem (which has mostly concerned Hausdorff dimension). The aforementioned $(n, k)$-Besicovitch conjecture is an early example this kind of problem; Bourgain proved that the conjecture holds for $k$ above a certain threshold \cite{bourgain1991nkbesicovitch}, and R. Oberlin achieved the same bound with a different proof that also gave more precise information about the associated maximal operator \cite{roberlin2007nkBesicovitch}.  D. Oberlin gave bounds on the Hausdorff dimension of unions of $k$-planes and hyperplanes \cite{doberlin2014kplanes, doberlin2007hyperplanes}. Additional results pertaining to hyperplanes include \cite{falconer2016strong} and \cite{dabrowski2022}. Wolff was the first to consider bounding the size of unions of fractal subsets of lines, introducing the Furstenberg set problem \cite{Wolff1999RecentWC}, and Molter and Rela made explicit the notion of an $(s, t)$-Furstenberg set \cite{molter2012furstenberg}.

For $t\in[0, 1]$, H\'era, Keleti, and Mathe showed \cite{heraKeleti2019hausdorff} that if $\mathcal{P}\subseteq\mathcal{A}(n, k)$ has Hausdorff dimension $t$, and $F=\bigcup_{P\in\mathcal{P}}P^\prime$ where each $P^\prime\subseteq P$ has Hausdorff dimension at least $s$, then,
    \begin{equation*}
        \dim_H(F)\geq 2s - k + \min\{t, 1\}.
    \end{equation*}
This bound and the aforementioned work of H\'era seem to be the first instances of the ``full'' fractal version of Furstenberg set problem in higher dimensions, that is, the $(s, t, k)$-Furstenberg set problem. Most recently, Bright and Dhar bounded the size of ``spread'' Furstenberg sets, where the $k$-planes are required to lie in different directions \cite{bright2025spreadfurstenbergsets}. We especially recommend \cite{bright2025spreadfurstenbergsets} to interested readers; in addition to providing more background on this kind of problem, its approach also draws on several of the different results we have referenced above. 

As previously indicated, Fraser proved sharp bounds on the packing dimension of variants of Furstenberg sets \cite{fraser2024variantsfurstenbergsetproblem} (In private communication, Bushling also noted a short alternative proof of Fraser's lower bound using effective dimension). Aside from Fraser's work, packing dimension does not seem to have featured much in the higher-dimensional version of the Furstenberg set problem. However, we note that packing dimension has occasionally played a more explicit role in the related exceptional set problem for orthogonal projections. For details, see \cite{Bushling24Projections, orponen2015exceptional, FalconerHowroyd1996}.

\subsection{Extensions of subsets of $k$-planes}

Our next results also concern the size of unions of subsets of $k$-planes, but from a somewhat different perspective. Here, we are interested in how the size of the union can \emph{change} when certain subsets of each $k$-plane are extended to the entire $k$-plane. In particular, we establish the following result.

\begin{thm}\label{thm:basicExtension}
Let $E\subseteq\mathbb{R}^n$ and let $F$ denote the union of $E$ with every $k$-plane that intersects $E$ in a set of positive measure. Then either $E=F$, or  

\begin{equation*}
    \dim_P(F)\leq 2 \dim_P(E) - k.
\end{equation*}
\end{thm}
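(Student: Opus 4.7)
The plan is to reduce the problem to bounding the packing dimension of the union of $k$-planes in $\mathcal{P}:=\{P\in\mathcal{A}(n,k):|E\cap P|_k>0\}$, and then to apply the point-to-set principle for effective packing dimension twice. Writing $s=\dim_P E$, note that $E\neq F$ forces $\mathcal{P}$ to be nonempty, which in turn forces $s\geq k$ (some $E\cap P$ has positive $k$-measure). Since packing dimension distributes over finite unions, $\dim_P F=\max\{s,\dim_P(\bigcup_{P\in\mathcal{P}}P)\}$, so it suffices to prove $\dim_P(\bigcup_{P\in\mathcal{P}}P)\leq 2s-k$. Parametrize each $P\in\mathcal{P}$ by its direction $W\in\mathcal{G}(n,k)$ and translate $a\in W^\perp$, and set $\mathcal{W}=\{W:\mathcal{P}_W\neq\emptyset\}$ and $\mathcal{P}_W=\{a\in W^\perp:W+a\in\mathcal{P}\}$.

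The heart of the argument consists of two Fubini-type dimension bounds on these parameter sets: $\dim_P\mathcal{W}\leq s-k$ and $\dim_P\mathcal{P}_W\leq s-k$ for every $W\in\mathcal{W}$. Both exploit the fact that for $P\in\mathcal{P}$, a Lebesgue-random $y\in E\cap P$ has effective packing dimension exactly $k$ conditional on $P$, regardless of the underlying oracle, by the standard algorithmic interpretation of Lebesgue-typicality. For the bound on $\mathcal{P}_W$, fix $W$ and an arbitrary oracle $A$, pick $a\in\mathcal{P}_W$ with $\Dim^A(a)\geq\dim_P\mathcal{P}_W-\ve$, and sample Lebesgue-randomly $y\in E\cap(W+a)$. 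The chain rule for effective dimension yields $\Dim^A(y)\geq \Dim^A(a\mid W)+k\geq\dim_P\mathcal{P}_W+k-\ve$; since this holds for every $A$ with some $y\in E$, the point-to-set principle gives $s\geq\dim_P\mathcal{P}_W+k$. The bound on $\mathcal{W}$ is analogous: a measurable selection $W\mapsto P_W\in\mathcal{P}_W$ lets one choose a near-optimal $W$ for $\Dim^A(\cdot)$ and Lebesgue-random $y\in E\cap P_W$, yielding $\Dim^A(y)\geq\dim_P\mathcal{W}+k-\ve$ and hence $s\geq\dim_P\mathcal{W}+k$.

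To conclude, I fix an oracle $A$ that simultaneously witnesses both Fubini bounds. For any $x\in\bigcup_{P\in\mathcal{P}}P$, write $x=a+w$ where $W$ is the direction of a containing plane $P\in\mathcal{P}$, $a\in\mathcal{P}_W$, and $w\in W$. The subadditive chain rule then gives
\[
\Dim^A(x)\leq \Dim^A(W)+\Dim^A(a\mid W)+\Dim^A(w\mid W,a)\leq (s-k)+(s-k)+k=2s-k,
\]
using the trivial bound $\Dim^A(w\mid W,a)\leq k$ for a point in a $k$-dimensional affine subspace. Taking the supremum over $x$ and the infimum over $A$ gives $\dim_P(\bigcup_{P\in\mathcal{P}}P)\leq 2s-k$ via the point-to-set principle, and combining with $\dim_P E=s\leq 2s-k$ completes the proof.

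The main obstacle will be the uniformity of the oracle in the last step. The individual bounds on $\dim_P\mathcal{W}$ and on each $\dim_P\mathcal{P}_W$ admit their own optimal oracles, but the chain-rule calculation requires a single oracle $A$ for which $\Dim^A(a\mid W)\leq s-k+\ve$ holds uniformly over all $W\in\mathcal{W}$, together with the corresponding uniform bound on $\Dim^A(W)$. I expect this to be achievable by absorbing $E$ itself into $A$, since $\mathcal{P}_W$ is computable from $E$ and $W$, but the measurable selection $W\mapsto P_W$ used in the bound on $\mathcal{W}$ must be arranged compatibly with the effective-dimension framework on the affine Grassmannian developed earlier in the paper, so that the selector does not inflate complexity.
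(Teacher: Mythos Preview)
Your approach has a genuine gap: the claim $\dim_P\mathcal{W}\leq s-k$ is false in general. Take $n=3$, $k=2$, and let $E$ be the closed unit ball. Then every direction $W\in\mathcal{G}(3,2)$ carries a plane meeting $E$ in positive area, so $\mathcal{W}=\mathcal{G}(3,2)$ has dimension $2$, while $s-k=3-2=1$. The error in your derivation of this bound is the inequality $\Dim^A(y)\geq \Dim^A(W)+k-\ve$: symmetry of information only gives
\[
K^A_r(y)\geq K^A_r(W)+K^A_r(y\mid W)-K^A_r(W\mid y)-O(\log r),
\]
and the term $K^A_r(W\mid y)$ is not small, because a single point $y$ lies on a $k(n-k)$-dimensional family of $k$-planes. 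No measurable selection $W\mapsto P_W$ repairs this, since the selector goes in the wrong direction (from $W$ to a plane, not from $y$ back to $W$). In the ball example your inequality would force $\Dim^A(y)\geq 4-\ve$ for a point in $\mathbb{R}^3$.

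The paper avoids parametrizing $\mathcal{P}$ altogether. It fixes a single plane $I$ and a positive-measure subset $S\subseteq I\cap E$, and shows directly that for any $x\in I$ there exist $y,z\in S$ with $K^A_r(x)\leq K^A_r(y)+K^A_r(z)-kr+\ve r$. The mechanism is collinearity: choose $y\in S$ whose first $k$ coordinates are random relative to $(A,x)$; by Fubini the line through $x$ and $y$ meets $S$ in positive measure, so choose $z$ on that line with one coordinate random relative to $(A,x,y)$. Then $x=y+t(z-y)$ is computable from $y,z$ and one real $t$, and the randomness built into $y,z$ recovers the $kr$ saving. Two points determine the line carrying $x$, which is what your single-point argument cannot do for the plane $W$.
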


This is a generalization of Proposition 1.5 in \cite{bushling2025extension} in two senses. First, we generalize from lines to $k$-planes. Second, instead of extending only subsets of $k$-planes with nonempty interior, we extend every subset with positive measure. In other words, we are (potentially) adding more to the original set because in Theorem \ref{thm:basicExtension}, less is required to add a given $k$-plane. Hence, the upper bound says (a small amount) more. Furthermore, assuming $k=1$, we are able to further strengthen this result by proving the same bounds for extensions not just of every positive measure subset of a line, but of every Hausdorff dimension $1$ subset of a line. More precisely,

\begin{prop}\label{prop:trivialRadialSlice}
Let $E\subseteq\mathbb{R}^n$ and let $F$ denote the union of $E$ with every line that intersects $E$ in a set of Hausdorff dimension 1. Then either $E=F$, or   
    \begin{equation*}
    \dim_P(F)\leq 2 \dim_P(E) - 1.
\end{equation*}
\end{prop}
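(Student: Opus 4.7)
The approach is to follow the proof of Theorem~\ref{thm:basicExtension} in the case $k=1$, replacing the Lebesgue density step (available under the positive-measure hypothesis of that theorem) by an application of the point-to-set principle for Hausdorff dimension. To set up: if $E=F$ the claim is trivial; otherwise some line $L_0$ has $\dim_H(L_0\cap E)=1$, so $d:=\dim_P(E)\geq\dim_H(E)\geq 1$ and the target $2d-1\geq d$ is meaningful. Applying the point-to-set principle for packing dimension to both $F$ and $E$ and taking the join of the optimal oracles, I fix one oracle $A$ with $\dim_P(F)=\sup_{x\in F}\Dim^A(x)$ and $\sup_{y\in E}\Dim^A(y)=d$. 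For $\ve>0$, pick $x\in F$ with $\Dim^A(x)\geq\dim_P(F)-\ve$. If $x\in E$ then $\Dim^A(x)\leq d\leq 2d-1$ and I am done; otherwise $x\in L\setminus E$ for some line $L$ with $\dim_H(L\cap E)=1$.

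The key observation is that a witness $y\in L\cap E$ with $\dim^{A,L}(y)\geq 1-\ve$ (and $\Dim^A(y)\leq d$, since $y\in E$) is still available under this weaker Hausdorff-dimension hypothesis: by the point-to-set principle for Hausdorff dimension applied to $L\cap E\subseteq\mathbb{R}^n$ relative to an oracle extending $(A,L)$,
\begin{align*}
\sup_{y\in L\cap E}\dim^{A,L}(y)\geq\dim_H(L\cap E)=1,
\end{align*}
so some such $y$ exists. Once $y$ is in hand, the algorithmic-geometric transfer step from the proof of Theorem~\ref{thm:basicExtension} (in the case $k=1$) applies as a black box, giving $\Dim^A(x)\leq 2d-1+O(\ve)$; taking the supremum over $x$ and sending $\ve\to 0$ then yields $\dim_P(F)\leq 2d-1$.

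The main obstacle, and the only nontrivial technical point, is verifying that the transfer step of Theorem~\ref{thm:basicExtension} really uses only the existence of a single companion $y\in L\cap E$ with $\dim^{A,L}(y)$ close to $1$, rather than a positive-measure-generic collection of such $y$. Because that step bounds the effective dimension of a single fixed point $x$ via a single fixed pair $(x,y)$ on a common line, with the positive-length hypothesis entering only in the selection of $y$, replacing Lebesgue density by the point-to-set principle for Hausdorff dimension should preserve the argument verbatim. Confirming that the existing proof is genuinely pointwise in $y$ is the essential check needed to complete the proof.
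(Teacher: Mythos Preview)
Your overall strategy---replace the Lebesgue/Fubini step in the proof of Theorem~\ref{thm:basicExtension} by an application of the point-to-set principle for Hausdorff dimension---is exactly what the paper does. But you have misread the transfer step: it is not a ``single fixed pair $(x,y)$'' argument. In the $k=1$ case of Theorem~\ref{thm:basicExtension}, the computation is $x = y + t(z-y)$ with \emph{two} companion points $y,z\in S$, and the key estimate pairs
\[
K^A_r(x,\hat y,\hat z)\leq K^A_r(y)+K^A_r(z)+r+O(\log r)
\]
with
\[
K^A_r(x,\hat y,\hat z)\geq K^A_r(x)+K^{A,x}_r(\hat y)+K^{A,x,y}_r(\hat z)-O(\log r).
\]
Both $\hat y$ (random relative to $A,x$) and $\hat z$ (random relative to $A,x,y$) are needed to produce the $-r$ in the final bound. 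A single point $y$ cannot recover the line $L$ from data in $E$, so your black-box invocation does not go through as stated.

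The fix is minor and is precisely what the paper's proof of this proposition does: invoke the point-to-set principle twice on the Hausdorff-dimension-$1$ set $L\cap E$ (projected to a coordinate), choosing $y$ with $\dim^{A,x}(\hat y)\geq 1-\tfrac{\ve}{4}$ and then $z$ with $\dim^{A,x,y}(\hat z)\geq 1-\tfrac{\ve}{4}$. After that the computation is identical to Theorem~\ref{thm:basicExtension}. Note also that your randomness condition is stated relative to $L$ rather than relative to $x$ (and then $x,y$); this turns out to be harmless, but the conditions actually used in the inequality chain are the latter.
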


The extension problem was originally studied in the context of extending line segments to full lines. Keleti showed that the Hausdorff dimension of a collection of line segments in the plane does not increase when replaced with the corresponding full lines, conjectured that this is true in $\mathbb{R}^n$ as well, and established a connection to the Kakeya conjecture \cite{keleti2016lines}. In particular, a resolution of the ``line segment extension conjecture'' in $\mathbb{R}^n$ would prove that Besicovitch sets have Hausdorff dimension at least $n-1$. Later, Keleti and Mathe proved that if the Kakeya conjecture holds in $\mathbb{R}^n$, then the line segment extension conjecture also holds \cite{keleti2022equivalences}. Hence, Wang and Zahl's recent proof of the Kakeya conjecture in $\mathbb{R}^3$ also established the line segment extension conjecture in $\mathbb{R}^3$ \cite{wang2025volumeestimatesunionsconvex}. Falconer and Mattila proved a stronger version of the conjecture for hyperplanes, in the sense that they showed extending every positive measure subset of hyperplanes does not increase the Hausdorff dimension of the original set \cite{falconer2016strong}. Moreover,  \cite{heraKeleti2019hausdorff} implies the same even when extending every full Hausdorff dimension set of hyperplanes. 

Keleti's original paper left open whether the \emph{packing} dimension of a set can increase under line segment extension; Bushling and the author proved that this is impossible even if every Hausdorff dimension $1$ subset of a line is extended \cite{bushling2025extension}.

\subsection{Results for hyperplanes}

In the final section of this paper, we focus on extensions and unions in the special case of hyperplanes. First, we generalize the main result of \cite{bushling2025extension} from lines in $\mathbb{R}^2$ to hyperplanes in $\mathbb{R}^n$. In particular, we show that

\begin{thm}\label{thm:optimalExtension}
    Let $E\subseteq\mathbb{R}^n$ be given, and let $F$ denote the union of $E$ with every $(n-1)$-plane that intersects $E$ in a set of Hausdorff dimension $n-1$. Then, 
    \begin{equation*}
        \dim_P(F)=\dim_P(E).
    \end{equation*}
\end{thm}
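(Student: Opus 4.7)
The bound $\dim_P(F) \geq \dim_P(E)$ is immediate from $E \subseteq F$. For the reverse inequality, the plan is to invoke the point-to-set principle for packing dimension: fix an oracle $A$ for which $\sup_{x \in E} \Dim^A(x) = \dim_P(E)$, and show $\Dim^A(y) \leq \dim_P(E)$ for every $y \in F$. For $y \in E$ this is built into the choice of $A$, so the real work is for $y \in F \setminus E$.

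For such a $y$, let $H = H_y$ be a hyperplane through $y$ with $\dim_H(E \cap H) = n - 1$. Working relative to the augmented oracle $A \oplus H$, encoded using the effective Grassmannian framework developed earlier in the paper, the point-to-set principle for Hausdorff dimension applied to $E \cap H \subseteq H$ furnishes, for each $\epsilon > 0$, a point $x \in E \cap H$ with $K^{A, H}_r(x) \geq (n - 1 - \epsilon) r$ at all large $r$. Combining this with the trivial bound $K^{A, x}_r(H) \leq (n - 1) r + O(\log r)$ (a hyperplane through $x$ is determined by its direction in $\mathcal{G}(n, n-1)$) and the symmetry of information yields
\begin{equation*}
K^A_r(H) \leq K^A_r(x) + \epsilon r + O(\log r).
\end{equation*}
Informally, the extending hyperplane is ``almost encoded'' by any effectively Hausdorff-random point of $E \cap H$.

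It remains to transfer this to a bound on $K^A_r(y)$ along the $\limsup$-scales realizing $\Dim^A(y)$. My approach is to let $x = x_r \in E \cap H$ depend on the scale: at each relevant scale $r$, select $x_r$ effectively close to $y$ inside $H$ (exploiting the full Hausdorff dimension of $E \cap H$) so that the conditional description of $y$ given $x_r$ costs only $O(\log r)$ bits. This would give $K^A_r(y) \leq K^A_r(x_r) + o(r)$ along those scales, and hence $\Dim^A(y) \leq \Dim^A(x) \leq \dim_P(E)$ as required.

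The main obstacle, and the technical heart of the argument, is precisely this scale-dependent choice of $x_r$: the naive decomposition $K^A_r(y) \leq K^A_r(H) + K^{A, H}_r(y) + O(\log r)$ together with the trivial bound $K^{A, H}_r(y) \leq (n - 1) r$ yields only $\Dim^A(y) \leq \Dim^A(x) + (n - 1)$, which is useless. Avoiding this spurious codimension-$1$ loss (and dealing with any $y$ that happens to be separated from $\overline{E \cap H}$) is the natural hyperplane analog of the planar line argument of \cite{bushling2025extension}, and I expect the effective Grassmannian machinery of this paper to be exactly what is required to carry it out in $\mathbb{R}^n$.
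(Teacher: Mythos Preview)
Your reduction and your step 4 inequality $K^A_r(H) \leq K^A_r(x) + \ve r + O(\log r)$ are correct, and you have correctly isolated the difficulty: the naive chain $K^A_r(y) \leq K^A_r(H) + (n-1)r$ loses a full $n-1$. However, your proposed repair --- a scale-dependent $x_r \in E \cap H$ with $K^A_r(y \mid x_r) = O(\log r)$ --- cannot work. Such a bound forces $|x_r - y| \lesssim 2^{-r}$, but $y \in F \setminus E$ is an arbitrary point of the hyperplane, while $E \cap H$ is a fixed set that does not contain $y$. There is no reason for $y$ to lie in $\overline{E \cap H}$: for instance $E \cap H$ could be bounded and $y$ could sit arbitrarily far out on $H$. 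You mention this parenthetically, but it is the generic situation, not an edge case, and it kills the approach outright. No amount of Grassmannian machinery manufactures a point of $E$ within $2^{-r}$ of $y$ when none exists.

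The paper's argument runs in the opposite direction: instead of upper-bounding $K^A_r(y)$, it lower-bounds $K^A_r(x_\ve, a\cdot x_\ve + b)$ for a \emph{single fixed} $x_\ve$ chosen with $\dim^{A,a,b}(x_\ve) \geq n-1-\ve$ (writing the hyperplane as $(a,b)$). Proposition~\ref{prop:everyPrecisionBound} gives, at \emph{every} precision $r$,
\[
K^A_r(x_\ve, a\cdot x_\ve + b) \;\geq\; (n-1)r + K^A_{c_r}(a,b) + (r - c_r) - C\sqrt{\ve}\,r,
\]
where $c_r$ minimizes $K^A_t(a,b) - t$ on $[1,r]$. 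An elementary upper bound (compute $(x, a\cdot x + b)$ from $(x,a,b)$ at precision $c_r$, then pay $n(r-c_r)$ bits to refine) shows the right-hand side dominates $K^A_r(y)$ for \emph{any} $y$ on the hyperplane. Hence $\Dim^A(x_\ve, a\cdot x_\ve + b) \geq \Dim^A(y) - C\sqrt{\ve}$, and since $(x_\ve, a\cdot x_\ve + b) \in E$ with $A$ a packing oracle for $E$, the theorem follows. The substance is all inside Proposition~\ref{prop:everyPrecisionBound}, which requires the intersection-geometry Lemma~\ref{lem:intersectionGeo}, the enumeration Lemma~\ref{lem:enumeration}, and the complexity-lowering oracle of Lemma~\ref{lem:oracleD} --- none of which your outline touches.
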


\noindent Note that this is an improvement over the $n-1$ case of Theorem \ref{thm:basicExtension}; it is also a packing dimension analog of the aforementioned extension results in \cite{falconer2016strong} and \cite{heraKeleti2019hausdorff}. Using much of the same machinery, we are also able to improve upon Theorem \ref{thm:planeUnion} in the case of full-dimension subsets of hyperplanes. Namely, we prove

\begin{thm}\label{thm:unionHyperplanes}
 Suppose $\mathcal{P}\subseteq\mathcal{A}(n, n-1)$ has packing dimension $t>0$. Let $F=\bigcup_{P\in\mathcal{P}}P^\prime$, where each $P^\prime\subseteq P$ has Hausdorff dimension $n-1$. Then,
    \begin{equation*}
        \dim_P(F)\geq n-1 + \frac{nt}{(n-1)t +n}.
    \end{equation*}
\end{thm}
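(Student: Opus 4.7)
The plan is to combine the packing-dimension point-to-set principle with a sharpened codimension-$1$ complexity inequality, reusing the machinery developed for Theorem~\ref{thm:optimalExtension}. Fix $\epsilon>0$. By the packing-dimension point-to-set principle applied to $F$, select an oracle $A$ with $\sup_{x\in F}\Dim^A(x)\leq \dim_P(F)+\epsilon$; applying the same principle to $\mathcal{P}\subseteq\mathcal{A}(n,n-1)$, extract $P\in\mathcal{P}$ with $\Dim^A(P)\geq t-\epsilon$, so that $K^A_{r_m}(P)\geq (t-2\epsilon)\log(1/r_m)$ along some sequence $r_m\downarrow 0$. Then applying the Hausdorff-dimension point-to-set principle to $P'\subseteq P$ relative to the oracle $(A,P)$, select $x\in P'$ with $K^{(A,P)}_r(x)\geq (n-1-2\epsilon)\log(1/r)$ for all sufficiently small $r$. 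The task is now to extract a strong lower bound on $K^A_{r_m}(x)$.

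The main step is to establish, at these ``good'' scales, the inequality
\[
K^A_{r_m}(x)\geq \Bigl(n-1+\frac{nt}{(n-1)t+n}-O(\epsilon)\Bigr)\log(1/r_m),
\]
after which taking $\limsup$ and then sending $\epsilon\to 0$ yields $\dim_P(F)\geq n-1+\frac{nt}{(n-1)t+n}$. A naive application of the chain rule gives $K^A_r(x)\geq K^A_r(P)+K^{(A,P)}_r(x)-K^A_r(P\mid x)$, and the trivial estimate $K^A_r(P\mid x)\leq (n-1)\log(1/r)$ (the family of hyperplanes through $x$ is $(n-1)$-dimensional) only yields $\Dim^A(x)\geq \max\{t,n-1\}$, which recovers nothing beyond Theorem~\ref{thm:planeUnion} in this regime.

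The hard part, and where the codimension-$1$ structure must be exploited, is sharpening this estimate. I expect the improvement to come from a multiscale analysis: the hypothesis $\Dim^A(P)\geq t$ is an infinitely-often statement, while $\dim^{(A,P)}(x)\geq n-1$ is an eventually-true statement, and this asymmetry should be converted, via an averaging or recursive argument similar to the one underlying Theorem~\ref{thm:optimalExtension}, into an improved bound on $K^A_r(P\mid x)$ at the good scales. Balancing two scale regimes---roughly, scales at which $P$ is locally rigid with respect to $x$ versus scales at which the normal direction of $P$ is free---should produce the exponent $\frac{nt}{(n-1)t+n}$ as the result of an explicit optimization. Carrying out this balancing cleanly, and verifying that the optimized exponent is indeed $\frac{nt}{(n-1)t+n}$ rather than some weaker function of $t$ and $n$, is the main technical obstacle.
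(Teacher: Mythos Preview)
Your setup is correct, but the proposal stops precisely where the actual work begins. You write that ``balancing two scale regimes \ldots\ should produce the exponent $\frac{nt}{(n-1)t+n}$'' and that carrying this out is ``the main technical obstacle''; that obstacle is the entire content of the proof, and you have not overcome it. The chain-rule decomposition $K^A_r(x)\geq K^A_r(P)+K^{A,P}_r(x)-K^A_r(P\mid x)$ is valid, but there is no mechanism in your proposal for improving the bound $K^A_{r_m}(P\mid x)\leq (n-1)r_m$ at the scales $r_m$ where $K^A_{r_m}(P)$ is large. Vaguely invoking the method of Theorem~\ref{thm:optimalExtension} does not suffice, because that argument compared two points on the \emph{same} hyperplane and produced no extra gain over $n-1$.

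Concretely, the paper's argument does not try to sharpen $K^A_{r_m}(P\mid x)$ at the good scales $r_m$. Instead it applies Proposition~\ref{prop:everyPrecisionBound} at a \emph{larger} precision $t_i>r_i$, chosen so that $t_i-d_i=K^A_{r_i}(a,b)$ where $n(r_i-d_i)=K^A_{r_i}(a,b)$; this precise arithmetic is what yields $r_i\geq \frac{n}{(n-1)\Dim^A(a,b)+n}\,t_i$ and hence the claimed exponent. To make Proposition~\ref{prop:everyPrecisionBound} effective at $t_i$ one must also introduce an auxiliary oracle $D_i$ (via Lemma~\ref{lem:oracleD}) that flattens $s\mapsto K^{A,D_i}_s(a,b)$ so that the minimizer $c_{t_i}$ is forced close to $t_i$. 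None of these ingredients --- the shift to a larger precision, the oracle $D$, or the explicit relation between $r_i$ and $t_i$ --- appears in your outline, and without them the optimization you allude to cannot be carried out.
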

\noindent Since Theorem \ref{thm:planeUnion} in this setting implies
 \begin{equation*}
     \dim_P(F)\geq n-1 + \frac{t}{n}, 
 \end{equation*}
 Theorem \ref{thm:unionHyperplanes} is an improvement for all $n$ and $0<t<n$. It is worth comparing this result to Corollary 1.4 in \cite{bushling2025extension}, which established an essentially optimal packing dimension bound for certain ``spread'' Furstenberg-type sets in $\mathbb{R}^2$. That is, Corollary 1.4 assumed $\mathcal{P}$ was large as a subset of the Grassmannian, not the affine Grassmannian. Hence, these results do not overlap even in the case $n=2$.  

\subsection{Overview of the paper}

The proofs of our theorems rely on tools from algorithmic information theory, specifically effective dimension. These tools have recently been used to make progress on a number of problems in geometric measure theory \cite{altaf2023distance, lutz2020bounding, stull2022pinned}. By the point-to-set principle of J. Lutz and N. Lutz, bounds on the information content of points in a set imply corresponding bounds on the classical dimension of the set itself \cite{lutz2018algorithmic}. So, for instance, when we show that any point in the $k$-plane extension $F$ of some set $E$ is easily computed given two points in $E$ (and hence that the information content of points in $F$ cannot be too large compared to the information content of points in $E$), this implies that the \emph{dimension} of $F$ cannot be too much larger than the \emph{dimension} of $E$. 

We briefly introduce the necessary information-theoretic preliminaries for points in $\mathbb{R}^n$ in Section 2. Roughly speaking, these preliminaries (usually) turn out to be a sufficient toolkit for our new ``extension'' results. However, because our ``union'' results directly invoke the packing dimension of sets of $k$-planes, it is necessary to develop a notion of effective dimension for the Grassmannian and the affine Grassmannian. This is the goal of Section 3, which unfortunately is necessarily somewhat technical. In particular, we have to define a number of Turing machines suited to the various proofs. However, we outline Section 3 below so that readers can elect to avoid details that are not germane to them, or even to skip over the majority of the section on a first read. 

Section 3 is divided into four subsections. In the first, we define the complexity of elements of the (non-affine) Grassmannian by associating to each $k$-plane through the origin the corresponding orthogonal projection matrix.\footnote{Identifying elements of the Grassmannian with orthogonal projection matrices as a basis to perform practical computations can be suboptimal; in particular, this approach suffers from numerical instability (c.f. \cite{lai2020simplergrassmannianoptimization}). However, this definition is viable for our purposes for a variety of reasons, among them (1) working with rational projection matrices means our Turing machines will typically be performing exact as opposed to approximate computations (2) there is no time or other resource bound on these machines as they perform their computations (3) even a massive fixed loss in precision will not affect our results, since typically the \emph{last} step of our arguments in later sections will be to let the precision parameter go to infinity.} The definition of effective dimension in $\mathbb{R}^n$ uses the complexity of rational points that suitably approximate an arbitrary point, and likewise we proceed on the Grassmannian via ``rational'' projection matrices. The Grassmannian is a metric space, so with our setup, we are able to apply a result of J. Lutz, N. Lutz, and Mayordomo to obtain a point-to-set principle \cite{LuLuMay2023PtS}. 

The second subsection extends the machinery of Section 3.1 from the Grassmannian to the affine Grassmannian by identifying $\mathcal{A}(n, k)$ with $\mathcal{G}(n, k)\times \mathbb{R}^{n- k}$ in a manner that is ``comprehensible'' to our Turing machines and again leads to a point-to-set principle. The third subsection has one overarching goal: showing that the symmetry of information (Proposition \ref{prop:symmetry}) holds in the context of $k$-planes. 

The final subsection is likely the most important for readers, since it is where we collect a number of ``geometric'' lemmas which will be directly employed in the proofs of several of our theorems. However, even in this section, readers are likely to find the particulars of the proofs to be considerably less important than the statements of the lemmas. 

We highlight two recurring ideas that feature in the proofs of some of our main results. 
\begin{itemize}
\item Lemma \ref{lem:pointsDeterminePlanes} -- the most important geometric lemma of Section 3 -- makes rigorous the notion that one should be able to computably approximate a $k$-plane given approximations of $k+1$ points that determine a basis for that plane. This lemma is used throughout Section 4. 
\item Whenever we have the freedom to do so, we will pick the points in our proofs to contain a large amount of information that is unrelated to whatever object we ultimately want to bound the complexity of. The proof of Theorem \ref{thm:basicExtension} gives is a simple example of this idea, but it is used throughout Section 4 and Section 5. 
\end{itemize}

Finally, the goal of Section 6 is the proof of Theorem \ref{thm:optimalExtension} and Theorem \ref{thm:unionHyperplanes}. Both follow from Proposition \ref{prop:everyPrecisionBound}, but this bound requires a new geometric ``intersection'' lemma that works in higher dimensions: Lemma \ref{lem:intersectionGeo}. The proof of this lemma involves the application of a number of Section 3's results to reduce the problem to the two-dimensional case where it is already known; see Lemma 7 of \cite{lutz2020bounding}.

Aside from the role of Lemma \ref{lem:intersectionGeo}, the deduction of Theorem \ref{thm:optimalExtension} is essentially the same as the proof of Theorem 1.3 in Bushling and the author's previous work \cite{bushling2025extension}. However, the proof of Theorem \ref{thm:unionHyperplanes} requires several new ideas as compared to Corollary 1.4 in the same paper. Informally, Proposition \ref{prop:everyPrecisionBound} is especially well-suited to dealing with one dimensional objects like directions $\mathbb{R}^2$, which is the content of Corollary 1.4. For higher-dimensional objects analyzed at arbitrary precisions, it often performs terribly. Hence, the proof of Theorem \ref{thm:unionHyperplanes} requires close study of the complexity of a given hyperplane $(a, b)$ and the careful selection of a suitable sequence of precisions.

\section{Preliminaries}
We provide a brief overview of some of the information theoretic preliminaries; more detailed discussions can be found in works including \cite{lutz2018algorithmic, lutz2020bounding, stull2022pinned}. We also recommend \cite{downey2010} for readers interested in a more thorough treatment of Kolmogorov complexity. 

Let $K^A(\sigma)$ denote the (prefix-free) Kolmogorov complexity of a string $\sigma$ (relative to an oracle $A\subseteq\mathbb{R}^n$), and let $K^A(\sigma\mid\tau)$ denote the Kolmogorov complexity of $\sigma$ given $\tau$. After encoding rational points as binary strings in some fixed manner, we may define the Kolmogorov complexity at precision $r$ of $x\in\mathbb{R}^n$ as
\begin{equation*}
K^A_r(x)=\min\left\{K(p)\,:\,p\in B_{2^{-r}}(x)\cap\Q^m\right\},
\end{equation*}
and the Kolmogorov complexity of $x$ at precision $r$ given $y$ at precision $s$ as 
\begin{equation*}
K^A_{r,s}(x\mid y)=\max\big\{\min\{K_r(p\mid q)\,:\,p\in B_{2^{-r}}(x)\cap\Q^m\}\,:\,q\in B_{2^{-s}}(y)\cap\Q^n\big\}.
\end{equation*}
These are the standard definitions of Kolmogorov complexity at precision $r$, but N. Lutz and Stull showed \cite{lutz2020bounding} that these definitions also agree up to a logarithmic term with 
\begin{equation*}
    K^A(x\upharpoonright r) \qquad \text{and} \qquad  K^A(x\upharpoonright r \mid y\upharpoonright s),
\end{equation*}
respectively. Here, $x\upharpoonright r$ represents the truncation of the binary expansion of $x$ to $r$ digits. The second approximate equality is often especially useful, as the definition of conditional complexity involves several steps and can be unwieldy. 

The next inequalities express that oracle access to the binary expansion of $y$ is at least as helpful as precision $s$ access to $y$ in computing $x$. Precision $s$ access to $y$, in turn, cannot appreciably increase the difficulty of a computation. 
\begin{equation*}
    K_r^{A, y}(x)\leq K_{r, s}^A(x\mid y) + O(\log(r + s)) \leq K_r^A(x) + O(\log (r + s)).
\end{equation*}
A key property of Kolmogorov complexity is the symmetry of information, which N. Lutz and Stull proved that also holds for complexity at precision $r$, as expressed in the following lemma \cite{lutz2020bounding}. 
\begin{lem}\label{lem:soIBasic}
	For every $m,n\in\N$, $x\in\R^m$, $y\in\R^n$, and $r,s\in\N$ with $r\geq s$,

		\begin{equation*}
        \vert K^A_{r, s}(x\mid y)+K^A_s(y)-K^A_{r, s}(x,y)\big\vert \leq O(\log r)+O(\log\log \vert y\vert).
        \end{equation*}
\end{lem}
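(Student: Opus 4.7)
The plan is to establish both directions of the inequality separately, since the statement asserts that $K^A_{r,s}(x\mid y) + K^A_s(y)$ equals $K^A_{r,s}(x,y)$ up to additive error $O(\log r) + O(\log\log|y|)$. This is the continuous analogue of the classical Kolmogorov symmetry of information $K(\sigma,\tau) = K(\tau) + K(\sigma\mid\tau) + O(\log K(\sigma,\tau))$ for finite strings, so both directions should reduce to the string version applied to suitable rational witnesses, combined with a careful accounting of the cost of switching between different rational approximations.

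For the easier direction, $K^A_{r,s}(x,y) \leq K^A_{r,s}(x\mid y) + K^A_s(y) + O(\log r)$, I would choose $q \in B_{2^{-s}}(y)\cap\Q^n$ witnessing the minimum defining $K^A_s(y)$. By the outer maximum in the definition of $K^A_{r,s}(x\mid y)$, there exists $p \in B_{2^{-r}}(x)\cap\Q^m$ with $K_r(p\mid q) \leq K^A_{r,s}(x\mid y)$. Concatenating a shortest program for $q$ with a shortest program for $p$ given $q$, together with an $O(\log r)$-bit self-delimiting prefix encoding where the two programs split, yields a description of the pair $(p,q)$, which approximates $(x,y)$ at the required precisions $r$ and $s$.

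The harder direction, $K^A_{r,s}(x\mid y) + K^A_s(y) \leq K^A_{r,s}(x,y) + O(\log r) + O(\log\log|y|)$, invokes classical symmetry of information on rational witnesses. Let $(p_0, q_0)$ witness $K^A_{r,s}(x,y)$. The string-level symmetry gives $K(q_0) + K(p_0 \mid q_0) \leq K(p_0,q_0) + O(\log K(p_0,q_0))$, and $K^A_s(y) \leq K(q_0)$ is immediate. The task therefore reduces to showing that for every $q \in B_{2^{-s}}(y)\cap\Q^n$ there exists $p \in B_{2^{-r}}(x)\cap\Q^m$ with $K_r(p \mid q) \leq K(p_0 \mid q_0) + O(\log\log|y|)$. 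Given such a $q$, one can recover $q_0$ from $q$ by applying a correction of norm at most $2^{-s+1}$; self-delimitingly encoding this correction and the gross location of $y$ costs $O(\log\log|y|)$ bits. A machine that reads $q$, applies the correction to obtain $q_0$, then runs a shortest program for $p_0$ given $q_0$, outputs the valid approximation $p_0 \in B_{2^{-r}}(x)\cap\Q^m$ with the desired conditional complexity.

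The main obstacle is precisely this universal quantification over rational approximations $q$ to $y$ in the definition of $K^A_{r,s}(x\mid y)$: one must show that \emph{every} admissible $q$ provides information comparable to the optimal $q_0$ witnessing the joint complexity. This accounts for the $O(\log\log|y|)$ overhead, which arises from the self-delimiting cost of specifying $|y|$ (and hence which corrections between admissible rational approximations are valid at scale $2^{-s}$). This is the standard technical wrinkle in transferring discrete Kolmogorov complexity identities to the continuous setting, and it is carried out in detail by N.\ Lutz and Stull in \cite{lutz2020bounding}.
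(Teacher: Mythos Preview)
The paper does not give its own proof of this lemma; it is stated in the preliminaries and attributed directly to N.\ Lutz and Stull \cite{lutz2020bounding}. Your proposal is consistent with this treatment in that you also ultimately defer the details to the same reference, so on that level the approaches agree.

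Your added sketch is largely correct, but there is one imprecise step in the hard direction. You write that given an arbitrary $q\in B_{2^{-s}}(y)\cap\Q^n$ one can recover the specific witness $q_0$ from $q$ by ``applying a correction of norm at most $2^{-s+1}$'' at cost $O(\log\log|y|)$. This is not true as stated: the difference $q_0-q$ is a particular rational vector whose Kolmogorov complexity is not bounded in terms of $s$ or $|y|$, so you cannot in general recover $q_0$ exactly from $q$ cheaply. The actual argument (and the one the paper itself later mirrors on the Grassmannian via the canonical points $d(X,r)$ in Section~3.3) routes through a \emph{canonical} representative such as the dyadic truncation $y{\upharpoonright}s$: from any admissible $q$ one can compute $y{\upharpoonright}s$ using only $s$, an $O_n(1)$-bit index, and the integer-part length $\lceil\log|y|\rceil$; likewise $q_0$ determines $y{\upharpoonright}s$, and it is this common intermediary that lets one transfer the conditional program. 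The $O(\log\log|y|)$ term arises from self-delimiting $\lceil\log|y|\rceil$, not from encoding $q_0-q$. Once this is corrected, your outline matches the Lutz--Stull proof the paper cites.
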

In practice, we will often collapse error terms involving the norm of points into the implicit constants in inequalities. We will extend Lemma \ref{lem:soIBasic} to the Grassmannian in Proposition \ref{prop:symmetry}, which is a major goal of the next section.

A final property of complexity at precision $r$ that we mention is a bound of Case and J. Lutz \cite{case2015dimension} that limits how much $K_r^A(x)$ can increase on intervals of precisions. 
\begin{prop}\label{prop:caseLutz}
For any $A \subseteq \mathbb{N}$, $r,s \in \N$, and $x \in \R^n$,
\begin{equation*}
    K_{r+s}^A(x) \leq K_r^A(x) + ns + O(\log(s + r)).
\end{equation*}
\end{prop}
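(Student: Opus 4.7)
The plan is to refine a near-optimal precision-$r$ rational approximation of $x$ into a precision-$(r+s)$ rational approximation by appending roughly $s$ additional binary digits in each of the $n$ coordinates, and to show that this refinement can be described with only $ns + O(\log(r+s))$ extra bits beyond the original approximation.

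Concretely, I would let $p \in \Q^n$ attain the minimum in the definition of $K_r^A(x)$, so that $p \in B_{2^{-r}}(x)$ and $K^A(p) = K_r^A(x)$. Fix a dimensional constant $c = c(n) \in \N$ large enough that $\frac{\sqrt{n}}{2} \cdot 2^{-c} < 1$, and consider the shifted dyadic grid $\Lambda = p + 2^{-(r+s+c)}\Z^n$. Letting $q$ be the point of $\Lambda$ nearest to $x$, one has $|q - x| \le \frac{\sqrt{n}}{2} \cdot 2^{-(r+s+c)} < 2^{-(r+s)}$, so $q \in B_{2^{-(r+s)}}(x) \cap \Q^n$ is an admissible witness for $K_{r+s}^A(x)$. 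Moreover, since $|p - x| < 2^{-r}$, the integer offset $v = 2^{r+s+c}(q - p) \in \Z^n$ has each coordinate bounded in absolute value by $2^{s+c+1} + 1$, so $v$ admits a fixed-length binary encoding using $n(s+c) + O(1)$ bits.

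To bound $K^A(q)$, I would feed a universal prefix-free Turing machine the shortest $A$-program for $p$ together with a prefix-free encoding of the pair $(r,s)$ and of the coordinate offset $v$; the machine reconstructs $q = p + 2^{-(r+s+c)} v$ and halts. Since $c$ depends only on the ambient dimension, the contribution $nc$ is a constant absorbed into the error term, yielding
\begin{equation*}
    K_{r+s}^A(x) \le K^A(q) \le K^A(p) + n(s+c) + O(\log(r+s)) = K_r^A(x) + ns + O(\log(r+s)).
\end{equation*}
The main obstacle is purely bookkeeping: one must encode the scale parameters $(r,s)$ and the separator between the program for $p$ and the block encoding $v$ in a prefix-free manner so that the universal machine can correctly parse its input. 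This is the standard source of the $O(\log(r+s))$ overhead and is already accounted for in the statement of the bound, so no difficulty beyond this routine encoding should arise.
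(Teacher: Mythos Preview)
Your proof is correct. The paper does not supply its own proof of this proposition; it is quoted as a preliminary result due to Case and J.~Lutz \cite{case2015dimension}, so there is no in-paper argument to compare against. Your approach---taking a $K$-minimizing rational $p\in B_{2^{-r}}(x)$, refining it to the nearest point $q$ of the grid $p+2^{-(r+s+c)}\Z^n$, and observing that the integer offset $v$ fits in $ns+O(1)$ bits once $r$ and $s$ are known---is the standard one and is essentially how the result is established in the cited reference. The bookkeeping you flag (prefix-free encoding of $(r,s)$, parsing the concatenated input) is exactly the source of the $O(\log(r+s))$ term and presents no difficulty.
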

\noindent We will frequently apply this bound in situations where $s$ is essentially constant with respect to $r$, meaning we only incur a logarithmic error moving between precisions. 

Finally, we explain the connection between Kolmogorov complexity and classical dimension. J. Lutz developed a notion of effective fractal dimension using computable betting strategies called gales \cite{Lutz03a, Lutz03b}. Mayordomo proved the equivalence of another definition involving Kolmogorov complexity \cite{Mayordomo02}, which we will utilize in this work. In particular, define
\begin{equation*}
    \dim^A(x):= \liminf_{r\to\infty} \frac{K_r^A(x)}{r} \qquad \text{and} \qquad \Dim^A(x):= \limsup_{r\to\infty}  \frac{K_r^A(x)}{r}.
\end{equation*}
We refer to the first of these as the effective Hausdorff dimension of $x$ relative to $A$ and the second as the effective packing dimension of $x$ relative to $A$. The point-to-set principle of N. Lutz and J. Lutz makes precise the connection between these effective notions and the corresponding classical fractal dimensions \cite{lutz2018algorithmic}. For any $E\subseteq\mathbb{R}^n$, 
\begin{equation*}
    \dim_H(E)=\min_{A\subseteq\mathbb{N}}\sup_{x\in E}\dim^A(x) \qquad \text{and} \qquad \dim_P(E)=\min_{A\subseteq\mathbb{N}}\sup_{x\in E}\Dim^A(x).
\end{equation*}

\section{The Grassmannian and effective dimension}

In order to apply the techniques of algorithmic information theory to $k$-planes, we first need a notion of the complexity of a $k$-plane. In building this notion, there will be two main considerations. On one hand, it is desirable that we work in the framework of \cite{LuLuMay2023PtS}, which established versions of the point-to-set principle in the very general setting of separable metric spaces -- a necessary tool as we study the size of collections of $k$-planes. On the other hand, we need to make our definitions in such a way that Turing machines can ``understand'' the geometry of the $k$-planes, in particular so that we can prove the lemmas of Section 3.4. As mentioned in the introduction, it will be crucial that our notion allows us to use appropriate points on a $k$-plane to determine the $k$-plane, up to some error. Additionally, we would like to be able to compute, say, the orthogonal complement of a given $k$-planes or points that lie on it. We will make this idea more precise with a series of geometric lemmas at the end of this section.  

As a note for readers before proceeding with this section in earnest, its proofs will vary considerably. In particular, some of the proofs in Section 3.1 and Section 3.2 are very straightforward from a computability-theoretic standpoint. However, some readers will likely come from a different background, and we are unaware of an explicit treatment of much of this content. Hence, we typically choose to include more rather than less detail, without claiming that every detail is especially interesting. Reiterating advice from the introduction, we expect that many readers would benefit from skipping to Section 3.4 where the results -- building on the rest of Section 3 -- are perhaps more interesting and in any event are more immediately applicable to our main theorems. 

\subsection{The Grassmannian}
To start, let $\mathcal{G}(n, k)$ denote the Grassmannian, that is, the set of $k$-planes in $\mathbb{R}^n$ that contain the origin. Note that each such plane corresponds to a unique linear operator: orthogonal projection onto that $k$-plane. For $V\in \mathcal{G}(n, k)$, we denote the projection operator $\pi_V$. Hence, we can regard $\mathcal{G}(n, k)$ as a subset of the space of $n\times n$ matrices. We will sometimes use expressions like $Vx$ instead of $\pi_Vx$ in the context of matrix multiplication. 

Associating to each $k$-plane its projection operator also allows us to define a metric on the Grassmannian. For $V_1, V_2\in \mathcal{G}(n, k)$, let
\begin{equation*}
    \rho (V_1, V_2) = \sup_{x\in\mathbb{R}^{n}, \vert x\vert =1}\vert \pi_{V_1}x - \pi_{V_2}x\vert. 
\end{equation*}
Any time we refer to $\mathcal{G}(n, k)$ in the sequel, it will be assumed it is equipped with this metric. Importantly, this metric plays well with other notions of distance between planes. Define the ``maximum distance'' between $V_1\in \mathcal{G}(n, k_1)$ and $ V_2\in \mathcal{G}(n, k_2)$ to be

\begin{equation*}
    m(V_1, V_2) = \sup_{v_1\in V_1,\vert v_1\vert = 1} \inf_{v_2\in V_2, \vert v_2\vert = 1} \vert v_1 - v_2 \vert.
\end{equation*}

It is straightforward to observe that $m$ is a metric when $k_1=k_2$ and that it is equivalent to $\rho$ in the sense that there exists some absolute constant $C_{n, k}$ such that for every $V_1, V_2\in\mathcal{G}(n, k)$,

\begin{equation*}
\frac{1}{C_{n, k}} m(V_1, V_2) \leq \rho(V_1, V_2) \leq C_{n, k} m(V_1, V_2).
\end{equation*}

Recall that the singular values of a real matrix $B$ are the square roots of the eigenvalues of $B^\text{T}B$. Another interpretation is that the singular values are the lengths of the axes of the image of the unit ball under multiplication by $B$. At several points, we will need the following geometric fact,

\begin{lem}\label{lem:singularValue}
    Let $\ve>0$ be given. Assume $v_1, ..., v_k$ is a basis for $V\in\mathcal{G}(n, k)$. Let $v^\prime_i$ be vectors such that $\vert v^\prime_i - v_i\vert <\ve $ and let $V^\prime$ be the space spanned by $v^\prime_1, ..., v^\prime_k$. Then

    \begin{equation*}
    m(V, V^\prime)\leq n \sigma^{-1} \ve,   
    \end{equation*}
    where $\sigma$ is the smallest nonzero singular value of the column matrix formed by $v_1, ..., v_k$.
\end{lem}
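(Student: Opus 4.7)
The plan is to use the basis $v_1, \ldots, v_k$ to parametrize unit vectors in $V$, transfer the parametrization to the perturbed basis $v^\prime_1, \ldots, v^\prime_k$, and control the resulting error via the smallest singular value of the basis matrix. Let $A$ denote the $n \times k$ matrix with columns $v_1, \ldots, v_k$, and let $A^\prime$ be the analogous matrix with columns $v^\prime_1, \ldots, v^\prime_k$. Given a unit vector $v \in V$, there is a unique coefficient vector $a \in \R^k$ with $v = Aa$; I would immediately take $v^\prime := A^\prime a \in V^\prime$ as the candidate approximant.

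The main quantitative step is the bound $|a| \leq \sigma^{-1}$. Since $A^{\text{T}}A$ is a positive definite $k \times k$ matrix whose eigenvalues are precisely the squares of the singular values of $A$, one has $|Aa|^2 = a^{\text{T}}A^{\text{T}}Aa \geq \sigma^2|a|^2$, which combined with $|Aa| = |v| = 1$ gives $|a| \leq \sigma^{-1}$. With this in hand, writing $v - v^\prime = \sum_{i=1}^k a_i(v_i - v^\prime_i)$ and applying the triangle inequality together with the Cauchy--Schwarz estimate $\sum_i |a_i| \leq \sqrt{k}\,|a|$ yields
\begin{equation*}
    |v - v^\prime| \leq \ve \sum_{i=1}^k |a_i| \leq \ve \sqrt{k}\,\sigma^{-1}.
\end{equation*}

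Finally, the definition of $m$ requires a \emph{unit} vector in $V^\prime$, so I would normalize, setting $w^\prime := v^\prime/|v^\prime|$. Provided the desired estimate is nontrivial we have $v^\prime \neq 0$, and a short triangle-inequality calculation using $\bigl||v^\prime| - 1\bigr| = \bigl||v^\prime| - |v|\bigr| \leq |v - v^\prime|$ gives $|v - w^\prime| \leq 2|v - v^\prime|$. Taking the supremum over unit vectors $v \in V$ produces $m(V, V^\prime) \leq 2\sqrt{k}\,\sigma^{-1}\ve$, and since the claim is vacuous when $\mathcal{G}(n, k)$ is a single point and otherwise $1 \leq k \leq n - 1$ with $n \geq 2$ (so that $(n-2)^2 \geq 0$ yields $2\sqrt{k} \leq n$), the stated inequality follows. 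The only substantive point is the singular-value estimate on $|a|$; everything else is routine manipulation with the triangle inequality, and I do not anticipate a significant obstacle.
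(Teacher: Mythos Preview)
Your proof is correct and follows essentially the same route as the paper: parametrize a unit vector $v\in V$ as $Aa$, set $v' = A'a$, bound $|a|\leq\sigma^{-1}$ via the smallest singular value, and control $|v-v'|$ by $\ve\sum_i|a_i|$. The only differences are cosmetic: you use Cauchy--Schwarz ($\sum|a_i|\leq\sqrt{k}\,|a|$) and then explicitly normalize $v'$ to a unit vector (picking up a factor of $2$), whereas the paper uses the cruder bound $\sum|a_i|\leq n\max_i|a_i|\leq n\sigma^{-1}$ and does not carry out the normalization step; your final inequality $2\sqrt{k}\leq n$ then recovers the stated constant.
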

\begin{proof}
Let $x$ be any point on $V$ with norm $1$. $v_1, ..., v_k$ is a basis for $V$, so $x$ can be written uniquely as $x = \sum_{i=1}^k\alpha_i v_i$. Define $x^\prime = \sum_{i=1}^k\alpha_i v^\prime_i$; clearly $x^\prime\in V^\prime$. Then
\begin{equation*}
    \vert x - x^\prime\vert  \leq \ve \sum_{i=1}^k\vert \alpha_i\vert. 
\end{equation*}
Hence, 
\begin{equation*}
    m(V, V^\prime)\leq \ve \sup_{\vert x\vert = 1}\sum_{i=1}^k\vert \alpha_i\vert.
\end{equation*}
Let $A$ denote the matrix with $v_1, ..., v_k$ as its columns and define $\sigma$ to be the smallest nonzero singular value of $A$. Note that $\min\{\vert Ay\vert : \vert y\vert =1\}=\sigma$. Let $x$ of norm 1 be given and let $\alpha$ denote the coordinates of $x$ in the basis $v_1, ..., v_k$. Since $\frac{\alpha}{\vert \alpha \vert}$ has norm $1$, $x$ has norm $1$, and $A\frac{\alpha}{\vert \alpha \vert} = \frac{x}{\vert\alpha \vert} $, we have that  $\max_i\vert \alpha_i\vert\leq \vert \alpha \vert \leq \sigma^{-1}$. Thus, 
\begin{equation*}
    \sup_{\vert x\vert = 1}\sum_{i=1}^k\vert \alpha_i\vert\leq n \sigma^{-1}.
\end{equation*}
and the conclusion follows. 
    
\end{proof}

Let $\mathbb{Q}^{n\times n}$ denote the space of $n\times n$ matrices such that all entries are rational. Fix some standard, computable encoding of the elements of $\mathbb{Q}^{n\times n}$ as natural numbers, and call this encoding $h$. Passing through $h$, it is easy to define a computable encoding of the elements of $\mathbb{Q}^{n\times n}$ as finite binary strings. 

At this point, it is worth taking note of a few things Turing machines can do with these rational matrices. These observations are likely quite obvious to readers with some computability-theoretic background, but we elect to explicitly mention them in keeping with our reasoning at the start of this section. 
\begin{itemize}
    \item Turing machines can calculate sums, products, transposes, and traces of rational matrices. 
    \item Turing machines can calculate the inverse of an invertible rational matrix $B$ by sequentially testing rational matrices $Q$ to see whether $QB=BQ=I$. 
    \item Turing machines can calculate the characteristic polynomial of an $n\times n$ rational matrix (by using any procedure for computing determinants), in the sense that they can output the exact coefficients of the polynomial (which have to be rational). 
    \item Using the characteristic polynomial, Turing machines can decide the rational eigenvalues of a matrix, including their algebraic multiplicity. By employing zero-finding algorithms, Turing machines can also approximate irrational eigenvalues to a desired precision. 
    \item As a consequence of the above, Turing machines can determine the rank of a rational matrix. 
\end{itemize}
These observations illustrate some of the advantage of working with rational matrices. Turing machines can perform basic computations involving rational numbers that can be strung together into calculations involving rational matrices, whereas in general it is not even decidable whether two merely computable numbers are even \emph{equal}. 

With these tools in hand and recalling the connection between $k$-planes and projections, we can move forward with our effectivization of the Grassmannian. $\mathcal{G}(n, k)$ is exactly the set of $n\times n$ matrices $V$ such that $V^2=V=V^{\text{T}}$ and $\text{trace}(V)=k$. Hence, whether a given $n\times n$ matrix consisting only of rational entries is in the Grassmannian is decidable. Our encoding $f_G$ will be as follows: given a finite binary string $\sigma$, convert it to a natural number $m$ in some standard way. Using the ordering of $h$, sequentially check whether the $i$th rational $n\times n$ matrix is in $\mathcal{G}(n, k)$. $f_G(\sigma)$ is the $m$th matrix verified to be in $\mathcal{G}(n, k)$. Note that the inverse of $f_G$ is also computable: given $Q\in \mathcal{G}(n, k)\cap \mathbb{Q}^{n\times n}$, simply keep checking the matrices in order until one finds $Q$. 

Equipped with such an encoding $f_G$, we can define the complexity of rational matrices $Q$:

\begin{equation*}
    K^A(Q) = \min\{K^A(\sigma): f_G(\sigma)=Q\}.
\end{equation*}

\noindent The following lemma shows that working with rational matrices will be ``robust'', in that planes spanned by rational vectors are given by rational projection matrices. 

\begin{lem}\label{lem:rationalColumns}
    Suppose $Q\in \mathcal{G}(n, k)$ has a basis of vectors $q_1, ..., q_k\in \mathbb{Q}^n$. Then $Q\in \mathbb{Q}^{n\times n}$.
\end{lem}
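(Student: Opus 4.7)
The plan is to exhibit an explicit formula for the projection matrix $Q$ in terms of the rational basis vectors $q_1,\dots,q_k$ and observe that every operation involved preserves rationality. Let $A$ be the $n\times k$ matrix whose columns are $q_1,\dots,q_k$. Since these vectors span the $k$-plane $V$ corresponding to $Q$ and are linearly independent, the standard least-squares formula gives
\begin{equation*}
    \pi_V = A(A^{\mathrm T}A)^{-1}A^{\mathrm T}.
\end{equation*}

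The entries of $A$ are rational by hypothesis, so $A^{\mathrm T}$ and the $k\times k$ Gram matrix $A^{\mathrm T}A$ lie in $\mathbb{Q}^{k\times k}$. Because $q_1,\dots,q_k$ are linearly independent, $A^{\mathrm T}A$ is invertible; and the inverse of an invertible rational matrix is again rational (for instance, via the adjugate formula $(A^{\mathrm T}A)^{-1} = \det(A^{\mathrm T}A)^{-1}\,\operatorname{adj}(A^{\mathrm T}A)$, where both the determinant and the adjugate have rational entries). Therefore $(A^{\mathrm T}A)^{-1}\in\mathbb{Q}^{k\times k}$, and the product $A(A^{\mathrm T}A)^{-1}A^{\mathrm T}$ lies in $\mathbb{Q}^{n\times n}$, which gives $Q\in\mathbb{Q}^{n\times n}$.

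There is no real obstacle here; the only thing to be slightly careful about is justifying that the expression above genuinely equals the orthogonal projection $\pi_V$ (which one checks by verifying that it acts as the identity on the column space of $A$ and annihilates the orthogonal complement), and that $A^{\mathrm T}A$ is invertible (which follows directly from the linear independence of the columns of $A$, since $A^{\mathrm T}Ax=0$ implies $|Ax|^2 = 0$, hence $Ax = 0$, hence $x=0$). Everything else is a routine rationality check.
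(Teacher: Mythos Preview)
Your proof is correct and follows essentially the same approach as the paper: both invoke the projection formula $\pi_V = A(A^{\mathrm T}A)^{-1}A^{\mathrm T}$ with the $q_i$ as columns of $A$ and observe that transposes, products, and inverses of rational matrices are rational. You supply a bit more detail (the adjugate argument for the inverse, the verification that $A^{\mathrm T}A$ is invertible), but the core idea is identical.
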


\begin{proof}
    For orthogonal projections onto the subspace given by the range of a real matrix $A$, we have the following formula:
    \begin{equation*}
        \pi_A = A\left(A^{\text{T}}A\right)^{-1}A^{\text{T}}.
    \end{equation*}
    Setting the columns of $A$ equal to $q_1, ..., q_k$ and noting that the transposes, products, and inverses of rational matrices are rational suffices. 
\end{proof}

The next lemma justifies that, as in the Euclidean case, we will be able to define the complexity at a given precision of a $k$-plane -- using the now known complexity of ``rational'' $k$-planes -- via approximation

\begin{lem}\label{lem:densityG}
    $\mathbb{Q}^{n\times n}\cap \mathcal{G}(n, k)$ is dense in $\mathcal{G}(n, k)$.
\end{lem}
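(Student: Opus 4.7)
The plan is to approximate an arbitrary $V \in \mathcal{G}(n,k)$ by rationally-spanned $k$-planes and then invoke Lemma \ref{lem:rationalColumns} to conclude that the associated projection matrices are rational.

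First I would fix $V \in \mathcal{G}(n,k)$ and $\varepsilon > 0$, and pick an orthonormal basis $v_1, \ldots, v_k$ of $V$. Since $\mathbb{Q}^n$ is dense in $\mathbb{R}^n$, for any $\delta > 0$ I can choose rational vectors $v_1', \ldots, v_k' \in \mathbb{Q}^n$ with $|v_i - v_i'| < \delta$. Because the column matrix formed by the orthonormal basis $v_1,\ldots,v_k$ has smallest nonzero singular value equal to $1$, the singular values depend continuously on the entries, so for $\delta$ sufficiently small the perturbed column matrix still has rank $k$, i.e., $v_1', \ldots, v_k'$ are linearly independent and span some $V' \in \mathcal{G}(n,k)$.

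Next I would apply Lemma \ref{lem:rationalColumns} to the rational basis $v_1', \ldots, v_k'$ of $V'$ to conclude that the projection matrix of $V'$ is in $\mathbb{Q}^{n\times n}$, so $V' \in \mathbb{Q}^{n\times n} \cap \mathcal{G}(n,k)$. To control the distance, I would apply Lemma \ref{lem:singularValue} (once $\delta$ is small enough that the smallest nonzero singular value of the column matrix of the $v_i$ is still at least $1/2$, say) to obtain $m(V, V') \leq 2n\delta$, and then use the equivalence of $m$ and $\rho$ to deduce $\rho(V, V') \leq 2 C_{n,k} n \delta$. Choosing $\delta < \varepsilon / (2 C_{n,k} n)$ then produces a rational projection matrix within $\varepsilon$ of $V$.

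I do not expect any serious obstacle here; the only minor subtlety is ensuring linear independence of the perturbed vectors, which is handled by continuity of singular values (or equivalently of the rank function on matrices of full column rank). Everything else is essentially a bookkeeping application of the two preceding lemmas together with density of $\mathbb{Q}$ in $\mathbb{R}$.
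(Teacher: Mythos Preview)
Your proposal is correct and follows essentially the same route as the paper: perturb a basis of $V$ to nearby rational vectors, invoke Lemma~\ref{lem:rationalColumns} to get a rational projection matrix, bound $m(V,V')$ via Lemma~\ref{lem:singularValue}, and pass to $\rho$ by equivalence of metrics. The only cosmetic differences are that you fix an orthonormal basis (so $\sigma=1$ directly, making your parenthetical about $\sigma\geq 1/2$ unnecessary---Lemma~\ref{lem:singularValue} uses the singular value of the \emph{original} column matrix, not the perturbed one) and that you spell out the linear-independence step via continuity of singular values, which the paper leaves implicit.
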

\begin{proof}
Let $V\in\mathcal{G}(n, k)$ and $\delta>0$ be given. Suppose $v_1, ..., v_k$ is a basis for $V$ and (as in Lemma \ref{lem:singularValue}), let $\sigma$ denote the smallest nonzero singular value of the corresponding column matrix. Let $v^\prime_1, ..., v^\prime_k$ be linearly independent rational vectors such that $\vert v^\prime_i - v_i\vert < \frac{\sigma \delta}{n}$. By Lemma \ref{lem:rationalColumns}, the unique $V^\prime\in \mathcal{G}(n, k)$ spanned by these vectors is in $\mathbb{Q}^{n\times n}\cap \mathcal{G}(n, k)$, and by Lemma \ref{lem:singularValue}, $m(V, V^\prime)<\delta$. Since $m$ and $\rho$ are equivalent as metrics, this completes the proof.

\end{proof}

In fact, more is true of this dense subset; it testifies to the computability of the metric space $\mathcal{G}(n, k)$. In particular, matching \cite{LuLuMay2023PtS}, we say a metric space is computable\footnote{Note that this term is occasionally used in a slightly different sense; see e.g. \cite{weihrauch2000, lacombe1959metric}} if there is some computable function $g:\left(\{0, 1\}^*\right)^2\times \mathbb{Q}^{+} \to \mathbb{Q}$ such that for every $v, w$, and $\delta$, 
\begin{equation*}
    \vert g(v, w, \delta) - \rho(f(v), f(w))\vert <\delta.
\end{equation*}
Intuitively, this says that one can approximate the distance between $k$-planes from the strings encoding them to any given accuracy.  

\begin{lem}\label{lem:GComputable}
$\mathcal{G}(n, k)$ is computable. 
\end{lem}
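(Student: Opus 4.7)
The goal is to exhibit a computable function $g:(\{0,1\}^*)^2\times\Q^+\to\Q$ that approximates $\rho(f_G(v),f_G(w))$ to within any prescribed rational tolerance $\delta$. The plan is to reduce the distance $\rho(Q_1,Q_2)$ between two rational projection matrices $Q_1=f_G(v)$ and $Q_2=f_G(w)$ to the computation of the largest singular value of a rational matrix, and then invoke the fact that the eigenvalues of a rational matrix can be approximated to any desired accuracy by a Turing machine.

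First I would observe that, since $f_G^{-1}$ is computable by the discussion preceding Lemma \ref{lem:densityG}, given the strings $v,w$ the machine can recover the explicit rational matrices $Q_1,Q_2\in\mathcal{G}(n,k)\cap\Q^{n\times n}$. By definition of $\rho$,
\begin{equation*}
\rho(Q_1,Q_2)=\sup_{|x|=1}|(Q_1-Q_2)x|=\|Q_1-Q_2\|_{\mathrm{op}}=\sqrt{\lambda_{\max}(M^{\mathrm T}M)},
\end{equation*}
where $M=Q_1-Q_2$. The matrix $M^{\mathrm T}M$ has rational entries which the machine can compute exactly from $Q_1,Q_2$ via finitely many rational additions and multiplications.

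Next I would use the fact, listed in the bullet points preceding Lemma \ref{lem:rationalColumns}, that a Turing machine can compute the characteristic polynomial $p(\lambda)\in\Q[\lambda]$ of the rational matrix $M^{\mathrm T}M$ exactly. Because $M^{\mathrm T}M$ is symmetric positive semidefinite, its eigenvalues are real and non-negative, so all real roots of $p$ lie in $[0,\|M\|_F^2]$, where $\|M\|_F$ is the (rationally computable) Frobenius norm. Applying a standard root-isolation/zero-finding procedure (e.g.\ Sturm sequences combined with bisection, which is carried out entirely with rational arithmetic on $p$) the machine can produce a rational interval of length at most $\delta^2/4$ containing the largest real root $\lambda_{\max}$. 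From any rational $\tilde\lambda$ within $\delta^2/4$ of $\lambda_{\max}$, a further rational bisection computes a rational $\tilde r$ with $|\tilde r-\sqrt{\lambda_{\max}}|<\delta$; we set $g(v,w,\delta)=\tilde r$.

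The only non-routine step is bounding the error in the square root: I would note that for $a,b\ge 0$, $|\sqrt a-\sqrt b|\le\sqrt{|a-b|}$, so that once $|\tilde\lambda-\lambda_{\max}|<\delta^2/4$ we get $|\sqrt{\tilde\lambda}-\sqrt{\lambda_{\max}}|<\delta/2$, leaving room for the final rational approximation to $\sqrt{\tilde\lambda}$. Everything else is a matter of stringing together computable primitives (rational arithmetic, characteristic polynomial, root isolation), all of which have already been flagged as available to our machines. The main conceptual obstacle is really just the identification $\rho(Q_1,Q_2)=\sqrt{\lambda_{\max}((Q_1-Q_2)^{\mathrm T}(Q_1-Q_2))}$, after which computability follows from standard algorithms for real algebraic numbers.
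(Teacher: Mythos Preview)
Your argument is correct and takes a genuinely different route from the paper's. You recognize that $\rho(Q_1,Q_2)$ is exactly the operator norm $\|Q_1-Q_2\|_{\mathrm{op}}$, hence the square root of the largest eigenvalue of the rational symmetric matrix $(Q_1-Q_2)^{\mathrm T}(Q_1-Q_2)$; computability then follows from exact computation of the characteristic polynomial together with rational root isolation (Sturm sequences and bisection). The paper instead approximates the supremum defining $\rho$ directly: it lays down a finite $\delta'$-net $\mathcal U$ of dyadic rational vectors near the unit sphere, computes $\max_{u\in\mathcal U}|Q_1u-Q_2u|$ exactly (a rational-arithmetic task), and uses the $1$-Lipschitz property of projections to control the discretization error. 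Your algebraic reduction is cleaner and exploits more structure---$\rho$ is a real algebraic number once $Q_1,Q_2$ are rational---whereas the paper's sampling argument is more elementary in that it avoids eigenvalue machinery and would adapt to metrics not given by an operator norm. One small slip: to recover $Q_1,Q_2$ from the input strings you want $f_G$ itself (strings $\to$ matrices), not $f_G^{-1}$; the computability of $f_G$ is immediate from its definition as a Turing machine procedure.
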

\begin{proof}
    We describe such a function $g$ via the corresponding Turing machine $M_g$. First, $M_g$ uses $f_G$ to obtain the projection matrices $f_G(v)$ and $f_G(w)$. Then, $M_g$ calculates 
\begin{equation*}\delta^\prime = \max_{i\in \mathbb{Z}}\{\frac{1}{2^i}: \frac{1}{2^i}<\frac{\delta}{n} \text{ and } \frac{1}{2^i} < \frac{1}{8n}\} 
\end{equation*}
It suffices to check that $\vert g(v, w, \delta) - \rho(f_G(v), f_G(w))\vert <\delta^\prime$.

Next, $M_g$ generates all of the (finitely many) $(\delta^\prime)^3$-dyadic rational vectors $u\in\mathbb{R}^n$ satisfying $1 -(\delta^\prime)^2 \leq \vert u\vert \leq 1 +(\delta^\prime)^2$; call this set $\mathcal{U}$. It is easy to see that membership in $\mathcal{U}$ is decidable. For any rational vector $u$ and rational number $q$, a Turing machine can decide whether $\vert u\vert \leq q$ by first checking whether $\vert v\vert^2=q^2$; a computation only involving rational numbers. If these are not equal, then either $\vert u\vert<q$ or $\vert u\vert>q$, and better and better rational approximations of $\vert u\vert$ will eventually indicate which is the case. 

Using the projection matrices, $M_g$ calculates 
\begin{equation*}
m=\max_{u\in \mathcal{U}} \vert \pi_{f_G(v)}u - \pi_{f_G(w)}u\vert 
\end{equation*}
and outputs some rational approximation $q$ of $m$, specifically some $q$ such that

\begin{equation}\label{eq:rationalApprox}
m - \frac{\delta^\prime}{2}<q< m.
\end{equation}

Since $\mathcal{S}^{n-1}$ is compact, we may choose $u^\prime$ to be a vector of magnitude $1$ such that
\begin{equation}\label{eq:compactEquality}
 \rho(f_G(v), f_G(w))=  \vert \pi_{f_G(v)} u^\prime - \pi_{f_G(w)} u^\prime \vert.
\end{equation}
By the definition of $\mathcal{U}$, there is some $u^{\prime\prime}\in \mathcal{U}$ and $d\in \mathbb{R}^n$ of magnitude at most $(\delta^\prime)^2$ such that $u^\prime = u^{\prime\prime} + d$. Projections are Lipschitz with constant $1$, hence,
\begin{equation}\label{eq:lipApprox}
\left\vert \vert \pi_{f_G(v)} u^\prime - \pi_{f_G(w)} u^\prime \vert - \vert \pi_{f_G(v)} u^{\prime\prime} - \pi_{f_G(w)} u^{\prime\prime} \vert \right\vert \leq 2 (\delta^\prime)^2<\frac{\delta^\prime}{2}.
\end{equation}

Combining \eqref{eq:rationalApprox}, \eqref{eq:compactEquality}, and \eqref{eq:lipApprox} with the triangle inequality completes the proof. 

\end{proof}

\noindent We now introduce the complexity at precision $r$ of elements of the Grassmannian. Let $V\in G(n, k)$. We define

\begin{equation*}
    K_r^A(V) = \min\{K^A(Q): Q\in \mathcal{G}(n, k)\cap \mathbb{Q}^{n\times n}\cap B_{2^{-r}}(V)\},
\end{equation*}

\noindent where the ball is with respect to the metric $\rho$.

This immediately gives us corresponding notions of effective dimension 

\begin{equation*}
    \dim^A(V) = \liminf_{r\to\infty}\frac{K^A_r(V)}{r} \qquad \text{ and } \qquad  \Dim^A(V) = \limsup_{r\to\infty}\frac{K^A_r(V)}{r}.
\end{equation*}

Applying Theorem 4.1 and Theorem 4.2 of \cite{LuLuMay2023PtS} (with the canonical gauge family $\theta_s(\delta) = \delta^s$) gives point-to-set principles on the Grassmannian.

\begin{prop}
For every $\mathcal{V}\subseteq \mathcal{G}(n, k)$, 
\begin{equation*}
    \dim_H(\mathcal{V}) = \min_{A\subseteq\mathbb{N}}\sup_{V\in\mathcal{V}}\dim^A(V) \qquad \text{and} \qquad \dim_P(\mathcal{V}) = \min_{A\subseteq\mathbb{N}}\sup_{V\in\mathcal{V}}\Dim^A(V).
\end{equation*}

\end{prop}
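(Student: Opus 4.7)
The plan is to invoke Theorem 4.1 and Theorem 4.2 of \cite{LuLuMay2023PtS}, which give general point-to-set principles for Hausdorff and packing dimension in any separable computable metric space with respect to a chosen gauge family. The proof is therefore a verification exercise: I need to confirm that $(\mathcal{G}(n,k),\rho)$ together with the encoding $f_G$ satisfies the hypotheses of those theorems, that the complexity $K_r^A(V)$ defined here is the one those theorems use, and that the canonical gauge $\theta_s(\delta) = \delta^s$ recovers classical Hausdorff and packing dimension. The two equalities in the proposition are proved in parallel, the only difference being $\liminf$ versus $\limsup$ in the definition of $\dim^A$ versus $\Dim^A$.

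First I would assemble the structural hypotheses using the three lemmas proved above. Lemma \ref{lem:densityG} exhibits $\mathcal{G}(n,k)\cap \mathbb{Q}^{n\times n}$ as a countable dense subset of $(\mathcal{G}(n,k),\rho)$, so the space is separable. The encoding $f_G$ is a computable surjection of $\{0,1\}^*$ onto this dense subset with a computable inverse, so it supplies the enumeration of basis points required by \cite{LuLuMay2023PtS}. Lemma \ref{lem:GComputable} provides a computable approximation of $\rho$ between any two such basis points to arbitrary rational tolerance $\delta$; taking $\delta = 2^{-r}$ as needed, this is precisely the computability condition on the metric space demanded by \cite{LuLuMay2023PtS}.

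Next I would match up the complexities. In \cite{LuLuMay2023PtS}, the precision-$r$ Kolmogorov complexity of a point is defined as the minimum $K^A$ of a code for an enumerated basis element within distance $2^{-r}$ of the point. Under the encoding $f_G$, the basis elements are exactly the matrices in $\mathcal{G}(n,k)\cap\mathbb{Q}^{n\times n}$, so this abstract complexity coincides with
\begin{equation*}
K_r^A(V) = \min\{K^A(Q) : Q\in \mathcal{G}(n,k)\cap\mathbb{Q}^{n\times n}\cap B_{2^{-r}}(V)\}
\end{equation*}
as defined in this subsection. Hence $\dim^A(V)$ and $\Dim^A(V)$ agree with the abstract effective dimensions of \cite{LuLuMay2023PtS} with respect to the canonical gauge $\theta_s(\delta)=\delta^s$.

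Finally, since $\mathcal{G}(n,k)$ with the metric $\rho$ is a genuine metric space, the $\theta_s$-generalized Hausdorff and packing measures of \cite{LuLuMay2023PtS} reduce to the classical $s$-dimensional Hausdorff and packing measures on $\mathcal{G}(n,k)$, so the corresponding generalized dimensions coincide with $\dim_H$ and $\dim_P$. Both equalities in the proposition then follow directly from Theorem 4.1 and Theorem 4.2 of \cite{LuLuMay2023PtS}. I do not expect a substantive obstacle; the only subtlety is bookkeeping the exact form in which \cite{LuLuMay2023PtS} asks for a computable metric space (total vs.\ partial enumeration, rational vs.\ dyadic tolerances), which amounts to a routine notational translation handled by feeding $\delta = 2^{-r}$ into the function $g$ constructed in Lemma \ref{lem:GComputable}.
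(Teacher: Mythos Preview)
Your proposal is correct and takes essentially the same approach as the paper: both invoke Theorem 4.1 and Theorem 4.2 of \cite{LuLuMay2023PtS} with the canonical gauge family $\theta_s(\delta)=\delta^s$, relying on Lemmas \ref{lem:densityG} and \ref{lem:GComputable} to verify the hypotheses. The paper simply states this in one sentence, whereas you have spelled out the verification in more detail.
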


\subsection{The affine Grassmannian}

At this point, we understand the information content of $k$-planes passing through the origin. However, clearly not every $k$-plane has this form. The set of translates of elements of the Grassmannian is the affine Grassmannian, which we denote $\mathcal{A}(n, k)$. Note that every $P\in \mathcal{A}(n, k)$ can be uniquely representative by a sum $V + t$ where $V\in \mathcal{G}(n, k)$ and $t\in V^\perp$. Hence, this space has dimension $(k+1)(n-k)$. 

This representation suggests a metric for $\mathcal{A}(n, k)$. For $P_1, P_2\in A(n, k)$, write $P_1 = V_1 + t_1$ and $P_2 = V_2 + t_2$. Then,
\begin{equation*}
    \rho(P_1, P_2) = \rho(V_1, V_2) + \vert t_1 - t_2\vert. 
\end{equation*}
This abuse of notation $\rho$ is reasonable because the restriction of this metric to the Grassmannian gives the same metric as in the previous subsection. 

$V^\perp$ is isomorphic to $\mathbb{R}^{n-k}$, but it is necessary to define a basis; our Turing machine does not necessarily understand $t\in V^\perp$ automatically. In particular, we would like our basis to be both computable from $V$ and not too singular. The following lemma shows that (a precise version of) the second condition is decidable. 

\begin{lem}\label{lem:niceComputableBasis}
     There exists a Turing machine $M$ that, given $q_1, q_2, ..., q_k\in\mathbb{Q}^n$, decides whether all of the following are satisfied:
     \begin{enumerate}
         \item Every vector $q_i$ satisfies $\frac{1}{2}\leq\vert q\vert \leq 2$,
         \item The vectors are linearly independent,     
        \item The smallest singular value of the column matrix is at least $\frac{1}{2}$.
     \end{enumerate}
\end{lem}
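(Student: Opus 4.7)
The plan is to decide each of the three conditions using exact rational arithmetic, appealing to Sturm's theorem for the singular-value condition in order to sidestep the potential irrationality of the eigenvalues involved.

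Conditions (1) and (2) are direct. For (1), the machine computes each $|q_i|^2 = \sum_j q_{i,j}^2 \in \Q$ and compares this rational against $1/4$ and $4$ by ordinary rational arithmetic. For (2), letting $A \in \Q^{n\times k}$ denote the column matrix of $q_1,\ldots,q_k$, linear independence is equivalent to $\det(A^{\mathrm T} A) \neq 0$; both $A^{\mathrm T} A$ and its determinant are computable rationals by the bulleted observations preceding the lemma, and testing whether a given rational is nonzero is trivially decidable.

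For condition (3), the machine computes the characteristic polynomial $p(x) \in \Q[x]$ of the rational $k\times k$ matrix $B := A^{\mathrm T} A$, which is symmetric and positive semidefinite. The singular values of $A$ are the nonnegative square roots of the eigenvalues of $B$, so the smallest singular value of $A$ is at least $1/2$ precisely when $p$ has no real root in $(-\infty, 1/4)$. The machine determines a rational Cauchy-type bound $M$ on the magnitudes of the roots of $p$ and applies Sturm's theorem on the interval $(-M, 1/4)$ to obtain the exact number of distinct real roots lying there, accepting iff that number is zero. Since Sturm's procedure acts on a polynomial with rational coefficients and rational interval endpoints, it is a finite exact computation using only rational arithmetic. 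This step is the only genuine obstacle: the eigenvalues of $B$ may well be irrational, so one cannot in general decide a strict inequality about them by numerical approximation alone, but Sturm's theorem produces the required exact integer count without ever having to locate the roots explicitly.
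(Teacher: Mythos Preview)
Your proof is correct. Conditions (1) and (2) are handled essentially as in the paper (the paper phrases (2) as a rank computation, which amounts to your determinant test). For condition (3) the two proofs diverge after computing the characteristic polynomial $p$ of $A^{\mathrm T}A$: the paper first determines the exact multiplicity $j$ of $\tfrac14$ as a root of $p$, then numerically approximates the remaining $k-j$ roots until each is certified to lie strictly above or below $\tfrac14$; you instead apply Sturm's theorem to count roots in $(-M,\tfrac14)$ in one shot. Your route sidesteps any approximation, while the paper's stays within the toolbox it has already explicitly set up (characteristic polynomials plus zero-finding algorithms). One minor caveat: the standard formulation of Sturm's theorem requires the interval endpoints not to be roots of $p$, so when $p(\tfrac14)=0$ you should first divide out the appropriate power of $(x-\tfrac14)$---which is exactly the paper's multiplicity check---before applying Sturm. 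This is routine and does not affect correctness.
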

\begin{proof}
    It is easy to see that $M$ can decide whether the first condition is satisfied. Checking the second and third conditions both involve $M$ calculating the singular values of the matrix $A$ that has columns $q_i$. The $k$ vectors are linearly independent if and only if this matrix has rank $k$, and we have already seen that the rank is computable. 

    $M$ can determine the characteristic polynomial, so it is capable of checking the multiplicity of $\frac{1}{4}$ as a root of this polynomial (which corresponds to $\frac{1}{2}$ as a singular value). Call this multiplicity $j$. $M$ then more and more closely approximates the roots of the characteristic polynomial until its approximations of $k-j$ of the roots are inconsistent with $\frac{1}{4}$; if all of these are more than $\frac{1}{4}$, the third condition is satisfied. 
\end{proof}

With this lemma in hand, it is easy to describe a Turing machine $T$ that generates a basis for $V^\perp$ from $V$. Fix some computable enumeration of sets of $n-k$ vectors in $\mathbb{Q}^n$. For any set $q_1, ..., q_{n-k}$ in this enumeration, $T$ calculates the component of each vector in $V^\perp$, that is, $q_i -Vq_i$, which is clearly in $V^\perp \cap \mathbb{Q}^n$. Then, $T$ uses $M$ to determine whether the conditions of Lemma \ref{lem:niceComputableBasis} are satisfied for the new collection of $n-k$ vectors. As soon as it finds that the conditions are satisfied for some collection of $n-k$ vectors, it halts and outputs them. It is essentially a consequence of Lemma \ref{lem:singularValue} and the existence of orthonormal bases that for any $k$-dimensional subspace, there exists some rational basis of $V^\perp$ satisfying the conditions, so $T$ is guaranteed to halt. 

Denoting these basis vectors $e_1(V), ..., e_{n-k}(V)$, we can now identify $\mathcal{A}(n, k)$ with $\mathcal{G}(n, k)\times \mathbb{R}^{n-k}$. In particular, $(V, v_1, ..., v_{n-k}) := V + \sum_{i=1}^{n-k} v_i e_i(V).$ As such, 
\begin{equation*}
    \rho\left((V, v_1, ..., v_{n-k}),(W, w_1, ..., w_{n-k})\right) = \rho(V, W) + \left\vert \sum_{i=1}^{n-k} v_i e_i(V) - \sum_{i=1}^{n-k} w_i e_i(W)\right\vert.
\end{equation*}

As before, we also need a notion of the ``rational'' elements of $\mathcal{A}(n, k)$; these will be the planes in $\left(\mathcal{G}(n, k)\cap \mathbb{Q}^{n\times n}\right)\times \mathbb{Q}^{n-k}$. There exists a computable encoding of the elements of $\left(\mathcal{G}(n, k)\cap \mathbb{Q}^{n\times n}\right)\times \mathbb{Q}^{n-k}$ as binary strings. We can construct one using the encoding of the Grassmannian, $f_G$, some standard encoding of $\mathbb{Q}^{n-k}$, and the standard pairing function. Denote our new encoding $f_A$. We define the complexity of the rational elements of the affine Grassmannian using $f_A$,

\begin{equation*}
    K^A(Q) = \min\{K^A(\sigma): f_A(\sigma)=Q\}.
\end{equation*}

\noindent Again, these rational elements form a dense subset.

\begin{lem}\label{lem:densityA}
    $\mathcal{G}(n, k)\cap \mathbb{Q}^{n\times n}\times \mathbb{Q}^{n-k}$ is dense in $\mathcal{A}(n ,k)$.
\end{lem}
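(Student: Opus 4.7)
The plan is to approximate an arbitrary $P = V + t \in \mathcal{A}(n,k)$ (with $t \in V^\perp$) in three stages: first approximate $V$ by a rational plane $W$, then project $t$ onto $W^\perp$, and finally express this projection in the Turing-machine-generated basis $e_1(W), \ldots, e_{n-k}(W)$ using rational coefficients. Concretely, given $\delta > 0$, I would first apply Lemma \ref{lem:densityG} to obtain $W \in \mathcal{G}(n,k) \cap \mathbb{Q}^{n\times n}$ with $\rho(V,W) < \delta/3$. Because $T$ was constructed using Lemma \ref{lem:niceComputableBasis}, the vectors $e_i(W) \in W^\perp$ are rational, have norms between $1/2$ and $2$, and the column matrix $E(W)$ they form has smallest singular value at least $1/2$.

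Next, I would control the translation error. Writing the orthogonal complement projection as $\pi_{V^\perp} = I - V$ and $\pi_{W^\perp} = I - W$, the operator norm bound $\|\pi_{V^\perp} - \pi_{W^\perp}\| = \rho(V,W)$ yields $|\pi_{W^\perp}(t) - t| \leq \rho(V,W)\cdot |t|$, which is small. Set $t' := \pi_{W^\perp}(t) \in W^\perp$ and express $t' = \sum_{i=1}^{n-k} a_i e_i(W)$. The singular value lower bound (as used in the proof of Lemma \ref{lem:singularValue}) gives $\max_i |a_i| \leq 2 |t'| \leq 2(|t| + \delta/3 \cdot |t|)$, so the $a_i$ lie in a bounded set depending only on $|t|$.

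Finally, choose rationals $w_i \in \mathbb{Q}$ with $|w_i - a_i| < \delta/(6 n)$. Then
\begin{equation*}
\Bigl| \sum w_i e_i(W) - t' \Bigr| \leq \sum |w_i - a_i|\cdot |e_i(W)| < \frac{\delta}{3},
\end{equation*}
and combining with $|t' - t| < \delta/3$ (for $W$ chosen close enough) and $\rho(V,W) < \delta/3$ gives
\begin{equation*}
\rho\bigl((V, v_1, \ldots, v_{n-k}),\, (W, w_1, \ldots, w_{n-k})\bigr) = \rho(V,W) + \Bigl| t - \sum w_i e_i(W) \Bigr| < \delta.
\end{equation*}
Since $(W, w_1, \ldots, w_{n-k}) \in (\mathcal{G}(n,k) \cap \mathbb{Q}^{n\times n}) \times \mathbb{Q}^{n-k}$, density follows.

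The only mildly subtle point is that the basis $e_i(W)$ produced by $T$ need not be close to $e_i(V)$ even when $W$ is close to $V$, so one cannot simply approximate the $v_i$ coordinate-wise. This is handled by re-expressing $t$ in the $e_i(W)$ basis after first projecting onto $W^\perp$; the singular value guarantee from Lemma \ref{lem:niceComputableBasis} ensures the new coordinates are bounded, so rational approximation of them introduces only a controllable error. All other estimates are routine applications of Lipschitz continuity of orthogonal projection and the triangle inequality.
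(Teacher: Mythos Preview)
Your argument is correct and proceeds along the same lines as the paper's (very brief) proof, which simply invokes Lemma~\ref{lem:densityG}, the density of $\mathbb{Q}^{n-k}$, and the definition of $\rho$. Your version supplies the details the paper omits, in particular the observation that one cannot simply perturb the coordinates $v_i$ because the map $W \mapsto (e_1(W),\ldots,e_{n-k}(W))$ need not be continuous; your workaround of projecting $t$ onto $W^\perp$ and then re-expanding in the $e_i(W)$ basis, with the singular-value bound from Lemma~\ref{lem:niceComputableBasis} controlling the coefficients, is exactly the right fix. One minor presentational point: you initially pick $\rho(V,W) < \delta/3$ but then need $|t'-t| \le \rho(V,W)\,|t| < \delta/3$, which requires the sharper choice $\rho(V,W) < \delta/(3\max\{1,|t|\})$; you acknowledge this with ``for $W$ chosen close enough'', but it would be cleaner to impose the correct bound from the start.
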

\begin{proof}
    This follows immediately from Lemma \ref{lem:densityG}, the density of $\mathbb{Q}^{n-k}$ in $V^\perp$, and the definition of the metric $\rho$. 
\end{proof}

Complexity at precision $r$ in the affine Grassmannian is defined in essentially the same way as it was for the Grassmannian. 

\begin{equation*}
    K_r^A(P) = \min\{K^A(Q): Q\in \left(\mathcal{G}(n, k)\cap \mathbb{Q}^{n\times n}\times \mathbb{Q}^{n-k}\right)\cap B_{2^{-r}}(P)\},
\end{equation*}

\begin{lem}\label{lem:AComputable}
    $\mathcal{A}(n, k)$ is computable.
\end{lem}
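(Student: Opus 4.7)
The plan is to construct a Turing machine $M_A$ realizing the function $g$. Given rational inputs $v$ and $w$ and rational precision $\delta$, $M_A$ first uses $f_A$ to recover decodings $(V, v_1, \ldots, v_{n-k})$ and $(W, w_1, \ldots, w_{n-k})$, where $V, W \in \mathcal{G}(n,k)\cap\mathbb{Q}^{n\times n}$ and all $v_i, w_i \in \mathbb{Q}$. By the definition of the affine metric,
\begin{equation*}
\rho\!\left(f_A(v),f_A(w)\right) = \rho(V,W) + \left|\sum_{i=1}^{n-k} v_i e_i(V) - \sum_{i=1}^{n-k} w_i e_i(W)\right|,
\end{equation*}
so it suffices to approximate each summand to within $\delta/2$ and output their sum.

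For the first summand, $M_A$ invokes the Turing machine from Lemma \ref{lem:GComputable} on the encodings of $V$ and $W$ together with the rational tolerance $\delta/2$, producing $g_1 \in \mathbb{Q}$ with $|g_1 - \rho(V,W)| < \delta/2$. For the second summand, $M_A$ runs the Turing machine $T$ described just after Lemma \ref{lem:niceComputableBasis} on $V$ to obtain rational basis vectors $e_1(V),\ldots,e_{n-k}(V)\in\mathbb{Q}^n$, and likewise on $W$. Since all of the $v_i, w_i$ and all $e_i(V), e_i(W)$ are rational, the vector $u := \sum v_i e_i(V) - \sum w_i e_i(W) \in \mathbb{Q}^n$ can be computed exactly. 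Its squared norm $|u|^2 \in \mathbb{Q}$ is then computed exactly, and $M_A$ produces a rational $q$ with $|q - |u|| < \delta/2$ by sequentially generating rational approximations of $\sqrt{|u|^2}$ (using, e.g., the bisection method, which is decidable over $\mathbb{Q}$ since comparisons $q^2 \lessgtr |u|^2$ involve only rational arithmetic). Finally $M_A$ outputs $g_1 + q$, and the triangle inequality gives $|M_A(v,w,\delta) - \rho(f_A(v), f_A(w))| < \delta$.

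The only point requiring care is that $T$ is guaranteed to halt on every rational input $V \in \mathcal{G}(n,k)\cap\mathbb{Q}^{n\times n}$; this was established in the discussion following Lemma \ref{lem:niceComputableBasis} via Lemma \ref{lem:singularValue} and the existence of orthonormal bases of $V^\perp$. Everything else -- decoding via $f_A$, invoking the Grassmannian metric approximation, rational matrix-vector arithmetic, and rational approximation of $\sqrt{\cdot}$ applied to a nonnegative rational -- consists of routine computable operations of the sort catalogued at the start of Section 3.1. No single step is a substantive obstacle; the content of the lemma is essentially that the affine metric is assembled from two ingredients (the Grassmannian metric and a norm of a rational vector in a computable basis) that are individually computable, so the assembled metric is as well.
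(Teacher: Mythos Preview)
Your proposal is correct and takes essentially the same approach as the paper: decompose $\rho$ into its Grassmannian and translation components, invoke Lemma~\ref{lem:GComputable} for the first, use the Turing machine $T$ to obtain the rational basis vectors $e_i(\cdot)$ for the second, and then approximate the norm of the resulting rational vector. The only quibble is terminological: $v$ and $w$ are binary strings, not ``rational inputs,'' and to feed them to the machine from Lemma~\ref{lem:GComputable} you implicitly rely on the computable invertibility of $f_G$ (noted in Section~3.1) to recover the $f_G$-encodings of $V$ and $W$; but this does not affect correctness.
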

\begin{proof}
    Let $f_Q$ denote the encoding of elements of $\mathbb{Q}^{n-k}$ as finite binary strings that was (implicitly) used in the definition of $f_A$. Given strings $v$ and $w$, our Turing machine can first compute strings $v_1, v_2$ and $w_1, w_2$ such that $f_A(v)= \left(f_G(v_1), f_Q(v_2)\right)$ and $f_A(w)= \left(f_G(w_1), f_Q(w_2)\right)$. In view of the definition of the metric $\rho$, and the proof of Lemma \ref{lem:GComputable} (which handles the distance in the ``first coordinate''), it suffices to construct some Turing machine $M_g$ such that the corresponding function $g$ satisfies 
    \begin{equation*}
    \left\vert g(v, w, \delta) - \left\vert \sum_{i=1}^{n-k} \pi_i (f_Q(v_2)) e_i(f_G(v_1)) - \sum_{i=1}^{n-k} \pi_i (f_Q(w_2)) e_i(f_G(w_1))\right\vert \right\vert <\frac{\delta}{2}.
    \end{equation*}
    In particular, $M_g$ already has access to $v_1, v_2$ and $w_1, w_2$, so it computes the image of these under $f_Q$ and $f_G$ as appropriate. Since our bases for $f_G(v_2)^\perp$ and $f_G(w_2)^\perp$ are computable, at this point the remaining problem amounts to approximating the norm of a known rational vector to a given level of accuracy, which can certainly be accomplished by some Turing machine. 
\end{proof}

Again applying \cite{LuLuMay2023PtS}, we obtain the following point-to-set principle.

\begin{prop}
For every $\mathcal{P}\subseteq \mathcal{A}(n, k)$, 
\begin{equation*}
    \dim_H(\mathcal{P}) = \min_{A\subseteq\mathbb{N}}\sup_{P\in\mathcal{P}}\dim^A(P) \qquad \text{and} \qquad \dim_P(\mathcal{P}) = \min_{A\subseteq\mathbb{N}}\sup_{P\in\mathcal{P}}\Dim^A(P).
\end{equation*}

\end{prop}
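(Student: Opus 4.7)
The plan is to deduce this point-to-set principle directly from Theorems 4.1 and 4.2 of \cite{LuLuMay2023PtS} by verifying their hypotheses for $\mathcal{A}(n,k)$ equipped with the metric $\rho$ and the chosen dense set of ``rational'' elements. Those theorems establish analogous characterizations of Hausdorff and packing dimension in any computable separable metric space whose dense subset is equipped with a computable encoding into binary strings. Thus essentially all of the work for this proposition has already been done earlier in the subsection, and the proof amounts to checking off these ingredients and invoking the general theorem with the canonical gauge family $\theta_s(\delta) = \delta^s$.

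First I would note that $\left(\mathcal{G}(n,k)\cap \mathbb{Q}^{n\times n}\right)\times \mathbb{Q}^{n-k}$ serves as the countable dense set in $\mathcal{A}(n,k)$: density is exactly Lemma \ref{lem:densityA}, and the encoding $f_A$ constructed before that lemma provides the required computable bijection between this dense set and a subset of $\{0,1\}^*$, with computable inverse (as $f_A$ is built from $f_G$, a standard encoding of $\mathbb{Q}^{n-k}$, and the standard pairing function, all of whose inverses are computable). Second, Lemma \ref{lem:AComputable} supplies the approximating function $g$ required by the definition of a computable metric space, so $(\mathcal{A}(n,k),\rho)$ falls under the hypotheses of \cite{LuLuMay2023PtS}.

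With these pieces in place, Theorem 4.1 and Theorem 4.2 of \cite{LuLuMay2023PtS}, applied to the gauge family $\theta_s(\delta)=\delta^s$, give exactly the two equalities in the statement, provided one checks that the notions of effective dimension defined via our $K_r^A(P)$ match the ones appearing in \cite{LuLuMay2023PtS}. That identification is routine: our $K_r^A(P) = \min\{K^A(Q): Q \in \left(\mathcal{G}(n, k)\cap \mathbb{Q}^{n\times n}\times \mathbb{Q}^{n-k}\right)\cap B_{2^{-r}}(P)\}$ is the complexity of a point at precision $r$ in the sense of that paper, so the resulting $\liminf$ and $\limsup$ (divided by $r$) coincide with their effective Hausdorff and packing dimensions on an arbitrary computable metric space.

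I do not anticipate a real obstacle here; the potential friction points are purely notational. The only step where care is needed is confirming that the gauge family $\theta_s(\delta)=\delta^s$ used in \cite{LuLuMay2023PtS} recovers the classical Hausdorff and packing dimensions on $\mathcal{A}(n,k)$ in the sense of our statement, but this is exactly the canonical instance treated in that reference. In fact, the proof of the analogous Proposition for $\mathcal{G}(n,k)$ in the previous subsection is the template: one simply replaces ``Grassmannian'' with ``affine Grassmannian,'' $f_G$ with $f_A$, Lemma \ref{lem:densityG} with Lemma \ref{lem:densityA}, and Lemma \ref{lem:GComputable} with Lemma \ref{lem:AComputable}.
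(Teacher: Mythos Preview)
Your proposal is correct and matches the paper's approach exactly: the paper simply states ``Again applying \cite{LuLuMay2023PtS}, we obtain the following point-to-set principle,'' relying on the previously established density (Lemma \ref{lem:densityA}), encoding $f_A$, and computability (Lemma \ref{lem:AComputable}), just as you outline. Your write-up is in fact more detailed than the paper's, but the content is identical.
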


\subsection{Conditional complexity and the symmetry of information}

Before we begin this section in earnest, we describe some notation. In order to freely work with the spaces $\mathcal{G}(n_1, k_1)$, $\mathcal{A}(n_2, k_2)$, and $\mathbb{R}^{n_3}$, we will use $\mathcal{X}$ and $\mathcal{Y}$ to denote \emph{any} of the spaces for any choice of $n_i$ and $k_i$. In particular, we allow the possibility that $\mathcal{X}=\mathcal{Y}$, and we will use $X$ and $Y$ to denote elements of $\mathcal{X}$ and $\mathcal{Y}$  respectively. Between the preliminaries and the previous two subsections, we have seen the rational elements of each kind of space; $\mathbb{Q}_{\mathcal{X}}$ will be shorthand for the rational elements of the space $\mathcal{X}$. Likewise $f_\mathcal{X}$ denotes the encoding for the space $\mathcal{X}$. 

The preliminaries included a definition of conditional complexity for points in Euclidean space, which we slightly generalize. Let $X\in\mathcal{X}$ and $Y\in\mathcal{Y}$ be given. Let $r, s$ be precisions and $A\subseteq\mathbb{N}$ be an oracle. Then we define
\begin{equation*}
    K^A_{r, s}(X\mid Y) = \max_{P\in B_{2^{-s}}(Y)\cap\mathbb{Q}_{\mathcal{Y}}}\left(\min_{Q\in B_{2^{-r}}(X)\cap\mathbb{Q}_{\mathcal{X}}} K^A(Q \mid P)\right).
\end{equation*}

In addition to conditional complexity, it is also necessary to define the complexity of tuples of such objects. This is slightly less immediate than in the case of Euclidean space. If $q\in\mathbb{Q}^n$ and $p\in\mathbb{Q}^m$, then $(q, p)\in\mathbb{Q}^{n+m}$, but it is not as obvious what larger space the pair of, say, an element of the Grassmannian and an element of the affine Grassmannian lies in. Nevertheless, we can make the following definition for $X\in\mathcal{X}$ and $Y\in\mathcal{Y}$. 
\begin{equation*}
K^A_{r, s}(X, Y) = \min\left\{K^A(\langle\sigma, \tau \rangle): f_{\mathcal{X}}(\sigma)\in B_{2^{-r}} \text{ and } f_{\mathcal{Y}}(\tau)\in B_{2^{-s}}(Y)\right\},
\end{equation*}
where $\langle, \rangle$ denotes a standard pairing function. We can define the complexity of $n$-tuples of such objects similarly, and it is straightforward that this definition agrees with the usual one for $X, Y$ in Euclidean space (up to at most a logarithmic error term in the precisions). 

We are now able to state the symmetry of information. 

\begin{prop}\label{prop:symmetry}
    Let $X\in\mathcal{X}$ and $Y\in\mathcal{Y}$. For every $A\subseteq\mathbb{N}$ and precisions $r, s$, 
    \begin{equation*}
        K^A_{r, s}(X, Y) = K^A_s(Y) + K_{r, s}^A(X\mid Y) \pm O(\log (r + s)).
    \end{equation*}
\end{prop}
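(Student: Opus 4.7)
The plan is to reduce Proposition~\ref{prop:symmetry} to the Euclidean-setting symmetry of information in Lemma~\ref{lem:soIBasic} (equivalently, to the classical string-level symmetry of information, which is the underlying combinatorial fact). The reduction is available because of two features built into Section~3.1 and Section~3.2: first, the encodings $f_{\mathcal{X}}, f_{\mathcal{Y}}$ and their inverses are computable, so the Kolmogorov complexity of a rational element $Q \in \mathbb{Q}_{\mathcal{X}}$ coincides up to an additive constant with the string-level complexity of the shortest $\sigma$ with $f_{\mathcal{X}}(\sigma) = Q$; second, the metrics $\rho$ on $\mathcal{X}$ and $\mathcal{Y}$ are computable (Lemma~\ref{lem:GComputable} and Lemma~\ref{lem:AComputable}), so a Turing machine with oracle access to the bits of any single string $\tau$ encoding a rational in $B_{2^{-s}}(Y)$ can certify membership in, enumerate, and translate between nearby rational approximations of $Y$. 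Modulo these observations, Kolmogorov complexity of points in $\mathcal{X}$ and $\mathcal{Y}$ behaves exactly like Kolmogorov complexity of their encoding strings, up to the usual $O(\log(r+s))$ slack.

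For the upper bound $K^A_{r,s}(X,Y) \leq K^A_s(Y) + K^A_{r,s}(X\mid Y) + O(\log(r+s))$, I would choose a string $\tau$ witnessing $K^A_s(Y)$, so $f_{\mathcal{Y}}(\tau) \in B_{2^{-s}}(Y)$. By the definition of $K^A_{r,s}(X\mid Y)$ as a maximum over rational approximations of $Y$, there exists some $\sigma$ with $f_{\mathcal{X}}(\sigma) \in B_{2^{-r}}(X)$ and $K^A(\sigma \mid \tau) \leq K^A_{r,s}(X \mid Y)$. A standard self-delimiting pairing then yields $K^A(\langle \sigma, \tau\rangle) \leq K^A(\tau) + K^A(\sigma \mid \tau) + O(\log(r+s))$, and by the definition of pair complexity this bounds $K^A_{r,s}(X,Y)$.

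The lower bound $K^A_s(Y) + K^A_{r,s}(X\mid Y) \leq K^A_{r,s}(X,Y) + O(\log(r+s))$ is the substantive direction. Starting from an optimal $\langle \sigma, \tau \rangle$ witnessing $K^A_{r,s}(X,Y)$, unpairing immediately gives $K^A_s(Y) \leq K^A(\tau) + O(1) \leq K^A(\langle \sigma, \tau\rangle) + O(\log(r+s))$. For the conditional term, fix an arbitrary $P \in B_{2^{-s}}(Y) \cap \mathbb{Q}_{\mathcal{Y}}$; by the string-level symmetry of information applied to the pair $\langle \sigma, \tau \rangle$ and the encoding of $P$, one obtains a $\sigma'$ with $f_{\mathcal{X}}(\sigma') \in B_{2^{-r}}(X)$ and $K^A(\sigma' \mid P) \leq K^A_{r,s}(X,Y) - K^A_s(Y) + O(\log(r+s))$, after using computability of $\rho$ to translate from a description relative to the ``optimal'' $\tau$ to a description relative to the arbitrary $P$. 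Taking the max over $P$ and the min over $\sigma'$ gives the required inequality.

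The main obstacle is precisely the quantifier alternation in the definition of $K^A_{r,s}(X \mid Y)$: the $\max$ over $P \in B_{2^{-s}}(Y) \cap \mathbb{Q}_{\mathcal{Y}}$ forces one to produce a good encoding of $X$ relative to every sufficiently good rational approximation of $Y$, not merely the particular $\tau$ extracted from the optimal pair. The resolution is that any two such $P, P'$ are within $2^{1-s}$ of one another in $\rho$, and because $\rho$ is computable a Turing machine can, given $P$ and $O(\log s)$ extra bits specifying which rational element in a ball around $P$ to output, compute any other such $P'$. This is what allows the Euclidean-style argument of Lemma~\ref{lem:soIBasic} to go through verbatim in the computable-metric-space setting, and absorbs the quantifier alternation into the $O(\log(r+s))$ error.
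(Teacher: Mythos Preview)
Your overall strategy --- reduce to string-level symmetry of information via the computable encodings --- is correct in spirit and matches the paper's, but the specific resolution of the quantifier alternation contains a genuine gap. You write that ``a Turing machine can, given $P$ and $O(\log s)$ extra bits specifying which rational element in a ball around $P$ to output, compute any other such $P'$.'' This is false: $\mathbb{Q}_{\mathcal{Y}}$ is \emph{dense} in $\mathcal{Y}$, so there are infinitely many rational elements in $B_{2^{1-s}}(P)$, and no finite number of bits can specify an arbitrary one of them. Computability of $\rho$ lets you \emph{enumerate} rationals near $P$, but it does not make the set finite or give any individual element low conditional complexity given $P$. In particular, the ``optimal'' $\tau$ you extract from the minimizing pair $\langle\sigma,\tau\rangle$ may be an extremely complicated rational (its complexity is pinned to $K^A_s(Y)$, not to $O(\log s)$), and there is no reason $K^A(\tau\mid P)$ should be small for a generic $P$ in the ball. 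Your intuition is calibrated to the Euclidean case, where one silently replaces ``rational in $B_{2^{-s}}(y)$'' by ``$2^{-s}$-dyadic rational near $y$,'' and \emph{those} are locally finite; the Grassmannian setting has no such canonical net built in.

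This is exactly the obstacle the paper flags before proving the proposition, and its resolution is to \emph{construct} a sparse net $\mathcal{D}(\mathcal{Y},s)\subseteq\mathbb{Q}_{\mathcal{Y}}$ playing the role of dyadic rationals: a computably enumerable $2^{-s}$-separated set with the property (Lemma~\ref{lem:uniformityForD}) that any ball of radius $2^{-(s-l)}$ contains at most $C_{\mathcal{Y},l}$ elements of it. One then defines a canonical representative $d(Y,s)\in\mathcal{D}(\mathcal{Y},s)\cap B_{2^{-s}}(Y)$ and proves (Lemma~\ref{lem:rationalsToStrings}) that $K^A_{r,s}(X,Y)$, $K^A_s(Y)$, and $K^A_{r,s}(X\mid Y)$ each agree with $K^A(d(X,r),d(Y,s))$, $K^A(d(Y,s))$, $K^A(d(X,r)\mid d(Y,s))$ up to $O(\log(r+s))$. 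The crucial step your argument is missing is precisely that $d(Y,s)$ \emph{can} be computed from any $P\in B_{2^{-s}}(Y)\cap\mathbb{Q}_{\mathcal{Y}}$ with $O(\log s)$ extra bits, because it is one of only $C_{\mathcal{Y},1}$ net elements within $2^{-(s-1)}$ of $P$. Once this net is in hand, your proof sketch goes through verbatim with $d(Y,s)$ in place of the arbitrary $\tau$, and the string-level symmetry of information finishes the job.
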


The proof we employ is somewhat more complex than in Euclidean space. This is because in that setting the usual definition of the complexity of a point (in terms of $K$-minimizing rationals at precision $r$) is equal to the complexity of the string formed by truncating the point to its first $r$ binary digits. In other words, there is a dyadic rational associated to each point and precision, and we can certainly apply the usual symmetry of information to the strings that encode these dyadic rationals. We have not defined any notion of dyadic rationals in the Grassmannian, and indeed it is less obvious how one should go about this as compared to $\mathbb{R}^n$.\footnote{One possibility would be to treat the projection matrix as an element of $\mathbb{R}^{n\times n}$ and then find the corresponding $2^{-r}$ dyadic rational, but this new matrix is not necessarily a projection matrix. Another possibility would be to give explicit charts for the Grassmannian or affine Grassmannian (as manifolds) and then declare that an element of either is $2^{-r}$-dyadic if its preimage in one of the charts is. For our purposes, it will be simplest to somewhat arbitrarily define a set of rationals elements that share key properties with dyadic rationals in $\mathbb{Q}^n$} 

However, we can define a set of rational elements for each space that serves the same purpose. In particular, we will be able associate an element in $\mathbb{Q}_{\mathcal{X}}$ to each pair $(X, r)$, which we will denote $d(X, r)$. We will provide an explicit definition for $d(X, r)$ shortly, but the main use of this notion will be the following lemma,

\begin{lem}\label{lem:rationalsToStrings}
  Let $X$ and $Y$ be elements of any of $\mathcal{G}(n_1, k_1)$, $\mathcal{A}(n_2, k_2)$, or $\mathbb{R}^{n_3}$. For every $A\subseteq\mathbb{N}$ and precisions $r, s$, 
        \begin{enumerate}
        \item[\textup{(i)}] $K^A_{r, s}(X, Y) = K^A(d(X, r), d(Y, s))\pm O(\log(r + s))$,
        \item[\textup{(ii)}] $K^A_{s}(Y) = K^A(d(Y, s))\pm O(\log(s))$,
        \item[\textup{(iii)}] $K^A_{r, s}(X\mid Y) = K^A(d(X, r)\mid d(Y, s))\pm O(\log(r + s))$.
    \end{enumerate}
\end{lem}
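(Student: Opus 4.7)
The plan is to introduce a canonical rational ``truncation'' $d(X,r)\in\mathbb{Q}_{\mathcal{X}}$ playing the role of the dyadic truncation $x\upharpoonright r$ from Euclidean space, and then establish (i)--(iii) by comparing the two sides of each identity via $d(\cdot,\cdot)$ up to $O(\log(r+s))$ error. Concretely, I would define $d(X,r)$ to be the first rational element in the $f_{\mathcal{X}}$-enumeration of $\mathbb{Q}_{\mathcal{X}}$ lying in $B_{2^{-r}}(X)$; existence is guaranteed by Lemmas \ref{lem:densityG} and \ref{lem:densityA}, and by construction $\rho(X,d(X,r))<2^{-r}$.

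The central technical step is a \emph{reconstruction lemma}: there is a single Turing machine $M$ such that, given as input any rational $Q\in\mathbb{Q}_{\mathcal{X}}$ with $\rho(X,Q)<2^{-r-O(\log r)}$, together with the binary encoding of $r$ and an auxiliary string of length $O(\log r)$, $M$ outputs $d(X,r)$. The idea is that $M$ scans the $f_{\mathcal{X}}$-enumeration and uses the computable metric (Lemmas \ref{lem:GComputable} and \ref{lem:AComputable}) together with the triangle inequality to classify each candidate rational as definitely inside, definitely outside, or borderline with respect to $B_{2^{-r}}(X)$; borderline cases are resolved using the auxiliary string.

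Granting the reconstruction lemma, the three assertions follow from short paired inequalities. For (ii), the $\leq$ direction is immediate from $d(Y,s)\in B_{2^{-s}}(Y)\cap\mathbb{Q}_{\mathcal{Y}}$; for the $\geq$ direction, take a $K^A$-minimizing $P\in B_{2^{-s}}(Y)\cap\mathbb{Q}_{\mathcal{Y}}$, pass by Proposition \ref{prop:caseLutz} to an approximation of $Y$ at precision $2^{-s-O(\log s)}$ at a cost of only $O(\log s)$ bits, and then apply the reconstruction lemma to compute $d(Y,s)$ using $O(\log s)$ further bits of advice. Part (i) is analogous, applied to both coordinates and using a standard pairing function. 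Part (iii) is the subtlest: from the max-min definition of $K^A_{r,s}(X\mid Y)$ one needs that any $P^*\in B_{2^{-s}}(Y)\cap\mathbb{Q}_{\mathcal{Y}}$ and $d(Y,s)$ are mutually reconstructible from one another up to $O(\log s)$ bits (again via the reconstruction lemma in both directions), so that both the outer maximum over $P^*$ and the inner minimum over $Q\in B_{2^{-r}}(X)\cap\mathbb{Q}_{\mathcal{X}}$ collapse onto $d(X,r)$ and $d(Y,s)$ within the allowed error.

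The hard part will be the reconstruction lemma itself, specifically bounding the number of borderline rationals encountered before $d(X,r)$ in the $f_{\mathcal{X}}$-enumeration. The index of $d(X,r)$ can be exponentially large in $r$, so naively there could be exponentially many rationals whose membership in $B_{2^{-r}}(X)$ is ambiguous from a single $2^{-r-O(\log r)}$ approximation, and then $O(\log r)$ advice bits would not suffice. I would handle this by modifying the definition of $d(X,r)$ to enforce a robust margin: take $d(X,r)$ to be the first rational in the enumeration with $\rho(X,\cdot)<2^{-r-1}$ subject to the constraint that every strictly earlier rational $Q'$ satisfies $\rho(X,Q')\notin[2^{-r-1},2^{-r}]$. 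This preserves the defining inclusion $\rho(X,d(X,r))<2^{-r}$ while ensuring that the classification of every earlier candidate is decidable from any approximation at precision $2^{-r-O(\log r)}$, up to only $O(\log r)$ truly borderline indices near $d(X,r)$ itself that the advice string must encode.
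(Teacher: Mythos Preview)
Your overall strategy---choose a canonical rational $d(X,r)$ near $X$ and show that any $2^{-r}$-rational approximation of $X$ determines $d(X,r)$ with $O(\log r)$ bits of advice---is exactly the paper's, and your deductions of (i)--(iii) from such a reconstruction lemma are essentially correct. The gap is in the reconstruction lemma itself, which you rightly flag as the hard part.

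With your initial definition ($d(X,r)=$ first rational in the full $f_{\mathcal X}$-enumeration lying in $B_{2^{-r}}(X)$), you correctly note that exponentially many earlier rationals may be borderline. But your proposed fix---take the first $Q$ with $\rho(X,Q)<2^{-r-1}$ such that every strictly earlier $Q'$ satisfies $\rho(X,Q')\notin[2^{-r-1},2^{-r}]$---is not well-defined. By density of $\mathbb{Q}_{\mathcal X}$, for typical $X$ some early rational in the enumeration will land in that annulus, and once that happens \emph{no} later $Q$ can satisfy your constraint, so $d(X,r)$ fails to exist. Even under a more charitable reading, there is no reason the number of borderline indices preceding $d(X,r)$ should be $O(\log r)$: the annulus has width comparable to $2^{-r}$, so the number of rationals in it up to a given index scales with that index, not with $r$.

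The paper's solution is to abandon the full enumeration of $\mathbb{Q}_{\mathcal X}$ and instead first build a computably enumerable \emph{net} $\mathcal D(\mathcal X,r)\subset\mathbb{Q}_{\mathcal X}$ whose points are pairwise at least $\sim 2^{-r}$ apart, then define $d(X,r)$ as the first net point within $2^{-r}$ of $X$. The separation yields a packing bound (Lemma~\ref{lem:uniformityForD}): any ball of radius $2^{-(r-1)}$ contains at most $C_{\mathcal X}$ net points, a constant depending only on the space. Hence, given any $2^{-r}$-approximation $\bar X$, one enumerates $\mathcal D(\mathcal X,r)$, lists the at most $C_{\mathcal X}$ net points within $2^{-(r-1)}$ of $\bar X$, and a \emph{constant}-size index (plus the precision $r$) identifies $d(X,r)$. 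The ``borderline'' issue never arises because the candidate set is bounded in size by geometry rather than by margin analysis.
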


The symmetry of information for the Grassmannian follows almost immediately from Lemma \ref{lem:rationalsToStrings}; we will momentarily postpone the proof of this lemma.

\begin{proofof}{Proposition \ref{prop:symmetry}}
    Since $d(X, r)$ and $d(Y, s)$ are finite-data objects, the usual symmetry of information applies and we have that 
    \begin{align*}
        K^A(d(X, r), d(Y, s)) &= K^A(d(Y, s)) + K^A(d(X, r)\mid d(Y, s)) \pm O(K^A(K^A(d(X, r), d(Y, s))))\\
        &= K^A(d(Y, s)) + K^A(d(X, r)\mid d(Y, s)) \pm O(\log (K^A(d(X, r), d(Y, s))))\\
        &= K^A(d(Y, s)) + K^A(d(X, r)\mid d(Y, s)) \pm O(\log (r+s)
        ).
    \end{align*}
    Applying the three approximate equalities of Lemma \ref{lem:rationalsToStrings} yields the desired conclusion. \qed
  \end{proofof}

It remains to rigorously define $d(X, r)$, establish some of its properties, and apply those properties in order to prove Lemma \ref{lem:rationalsToStrings}.

Let a space $\mathcal{X}$ and a precision $r$ be given. We describe a Turing machine $M$ that enumerates the elements of a set $\mathcal{D}(\mathcal{X}, r)$. $M$ starts with an enumeration of the rational elements of $\mathcal{X}$ (which we denote $\mathbb{Q}_{\mathcal{X}}$) and from this calculates an enumeration $(q_i, n_i)$ of $\mathbb{Q}_{\mathcal{X}}\times \left(\mathbb{N}\setminus \{0\}\right)$ with the property that for any $q$ and $j$, $(q, j)$ always appears before $(q, j+1)$. $M$ begins by placing $q_1$ in $\mathcal{D}(\mathcal{X}, r)$. When $M$ considers $(q_i, n_i)$, it outputs a rational approximation of the distance between $q_i$ and every one of the points already in $\mathcal{D}(\mathcal{X}, r)$ which differs by no more than $2^{-n_ir}$ from the actual distance; this is possible for $M$ because, by Lemma \ref{lem:GComputable} and Lemma \ref{lem:AComputable}, all of these metric spaces are computable. 

$M$ then checks whether any of these approximate distances is less than $2^{-r + 1} + 2^{-n_ir}$. If so, it moves onto the next element in the enumeration, if not, it adds $q_i$ to $\mathcal{D}(\mathcal{X}, r)$.

Note that this construction shows $\mathcal{D}(\mathcal{X}, r)$ is enumerable by a fixed program given only $\mathcal{X}$ and $r$. We also have the following lemma.

\begin{lem}\label{lem:uniformityForD}
   Let $l\in\mathbb{N}$ be given. For every $X\in\mathcal{X}$ and precision $r$, there exists some constant $C_{\mathcal{X}, l}$ such that for all precisions $r$ and $X\in \mathcal{X}$, 
       \begin{equation*}
           1\leq \#\{B_{2^{-(r-l)}}(X)\cap  \mathcal{D}(\mathcal{X}, r)\} \leq C_{\mathcal{X}, l}.
       \end{equation*}

\end{lem}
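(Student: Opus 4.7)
The plan is to derive the lemma from a single structural fact about the construction together with a standard packing bound. First, by the construction, any two distinct $q, q' \in \mathcal{D}(\mathcal{X}, r)$ satisfy $\rho(q, q') \geq 2^{-r+1}$: when $q_i$ (paired with $n_i$) is added, all approximate distances to points already in $\mathcal{D}$ exceed $2^{-r+1} + 2^{-n_i r}$, and these approximations are within $2^{-n_i r}$ of the true distances, so the true distances are at least $2^{-r+1}$.

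For the lower bound, I will use density of $\mathbb{Q}_\mathcal{X}$ (Lemmas \ref{lem:densityG}, \ref{lem:densityA}, and the obvious fact for $\mathbb{R}^n$) together with the maximality implicit in the construction. Fixing $X \in \mathcal{X}$ and $\delta > 0$ small, choose $q' \in \mathbb{Q}_\mathcal{X}$ with $\rho(q', X) < \delta$. If no point of the (final) set $\mathcal{D}(\mathcal{X}, r)$ lay within $2^{-r+1} + 2\delta$ of $X$, the same would be true at every finite stage of the enumeration, so for every $q$ ever placed in $\mathcal{D}$ we would have $\rho(q, q') > 2^{-r+1} + \delta$. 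Then for sufficiently large $n$, the approximate distances from $q'$ to all members of $\mathcal{D}$ at the moment $(q', n)$ is processed would exceed $2^{-r+1} + 2^{-nr}$, forcing $q'$ itself to be added — a contradiction once $\delta < 2^{-r+1}$. Hence some $q \in \mathcal{D}(\mathcal{X}, r)$ lies within $2^{-r+1} + 2\delta$ of $X$, which is inside $B_{2^{-(r-l)}}(X)$ for $l$ at least $2$ and $\delta$ chosen suitably small.

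For the upper bound, I will pass to a Euclidean embedding. Each of the spaces under consideration admits a bi-Lipschitz embedding into some $\mathbb{R}^{N(\mathcal{X})}$ with constants depending only on $\mathcal{X}$: trivially for $\mathbb{R}^n$; for $\mathcal{G}(n, k)$ by identifying $V$ with its projection matrix $\pi_V \in \mathbb{R}^{n \times n}$ and using the equivalence between the operator $2$-norm (which defines $\rho$) and the Frobenius norm; and for $\mathcal{A}(n, k)$ by sending $V + t$ to $(\pi_V, t) \in \mathbb{R}^{n^2} \times \mathbb{R}^n$, which is compatible with the metric $\rho$ since the latter is by definition a sum of the two factor distances. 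Under this embedding, the set $\mathcal{D}(\mathcal{X}, r) \cap B_{2^{-(r-l)}}(X)$ becomes a $c_1 2^{-r+1}$-separated subset of a Euclidean ball of radius $c_2 2^{-(r-l)}$, and a standard volume argument bounds its cardinality by a constant of the form $(C \cdot 2^{l})^{N(\mathcal{X})}$, giving the desired $C_{\mathcal{X}, l}$ independent of both $X$ and $r$.

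The main obstacle is verifying that the bi-Lipschitz embeddings can be taken with constants independent of $X$ and $r$. For each of the three spaces this reduces to the global equivalence of two norms on a fixed finite-dimensional vector space (operator versus Frobenius on $\mathbb{R}^{n \times n}$, plus the sum-product structure for the affine case), so once the embeddings are in place both bounds are essentially routine.
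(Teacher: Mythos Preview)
Your approach matches the paper's: density of $\mathbb{Q}_{\mathcal{X}}$ plus maximality of the construction for the lower bound, and mutual separation plus a packing count for the upper bound.

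One point to flag on the lower bound: as you acknowledge, your argument only produces an element of $\mathcal{D}(\mathcal{X},r)$ within roughly $2^{-r+1}$ of $X$, so it only handles $l\ge 2$. The lemma is stated for all $l\in\mathbb{N}$, and the paper immediately uses the $l=0$ case to \emph{define} $d(X,r)$ as an element of $\mathcal{D}(\mathcal{X},r)$ within $2^{-r}$ of $X$. This is actually a symptom of an inconsistency in the paper itself: the construction is written with threshold $2^{-r+1}$, but the paper's own proof of the lower bound computes with $2^{-(r+1)}$. With the threshold $2^{-(r+1)}$ your identical argument yields a point within $2^{-(r+1)}+2\delta<2^{-r}$ of $X$ and covers all $l\ge 0$; with the literal threshold $2^{-r+1}$, neither your argument nor the paper's can do better than $l\ge 2$, since any rational close to $X$ would be within $2^{-r+1}$ of an element already present and hence never added.

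For the upper bound, your bi-Lipschitz embedding into Euclidean space followed by a volume count is a clean alternative to the paper's direct appeal to packing estimates on the Grassmannian; both rest on the same $\gtrsim 2^{-r}$ separation of points in $\mathcal{D}(\mathcal{X},r)$ and give a bound of the same form.
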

\begin{proof}
    To see that the lower bound holds, assume that there is no element of $\mathcal{D}(\mathcal{X}, r)$ within a distance $2^{-r}$ of $X$. By Lemma \ref{lem:densityG} and Lemma \ref{lem:densityA}, the rational elements of $\mathcal{X}$ are dense, so let $\bar{X}$ be one within $2^{-(r+1)}$ of $X$. Then $\bar{X}$ is a distance of more than $3*2^{-(r+2)}$ from any element of $\mathcal{D}(\mathcal{X}, r)$. However, if this is the case, then when $ M$ eventually checks $(\bar{X}, 3)$, it will find all of the approximate distances are at least $3*2^{-(r+2)} - 2^{-3r}> 2^{-(r+1)} + 2^{-3r}$ and add $\bar{X}$, which yields a contradiction since $\bar{X}$ is close to $X$. 

    The upper bound follows from the fact that all elements of $\mathcal{D}(\mathcal{X}, r)$ are mutually at least $2^{-(r+1)}$ apart (since $M$ checks a larger lower bound on the distances to existing elements before it adds any element). There are a number of ways this fact can be used. For instance, we can associate to $\mathcal{D}(\mathcal{X}, r)$ a packing by balls of radius $2^{-(r+2)}$. At this point, an upper bound of the desired form follows from even trivial estimates on the density of sphere packings in the Grassmannian (see, for instance, \cite{henkel2003}).
\end{proof}

In view of the second property, we can finally define $d(X, r)$ to be the first (in the order $M$ generates) $d\in\mathcal{D}(\mathcal{X}, r)$ such that the distance between $X$ and $d$ is less than $2^{-r}$. We are now in a position to complete our proof of the symmetry of information. 
  
  \begin{proofof}{Lemma \ref{lem:rationalsToStrings}}
    Recall that the lemma consists of three approximate equalities, which we prove in order.
    
    \noindent \textit{Proof of (i):} First, we show that $K^A_{r, s}(X, Y) = K^A(d(X, r), d(Y, s))\pm O(\log(r + s))$. One direction is immediate; since $d(X, r)$ and $d(Y, s)$ are rational elements lying within $2^{-r}$ of $X$ and $Y$, by definition 
    \begin{equation*}
         K^A_{r, s}(X, Y) \leq  K^A(d(X, r), d(Y, s)).
    \end{equation*}
   The other direction follows if for \emph{every} rational approximation $(\bar{X}, \bar{Y})$ of $X, Y$ within the prescribed distance, 
   \begin{equation*}
       K^A(d(X, r), d(Y, s)) \leq K^A(\bar{X}, \bar{Y}) + O(\log (r + s)),
   \end{equation*}
   where the implicit constant does not depend on $\bar{X}$ or $\bar{Y}$. We describe a Turing machine $M$ that calculates $d(X, r)$ and $d(Y, s)$ given $\bar{X}, \bar{Y}, s, r,$ strings $\sigma_{\mathcal{X}}$ and $\sigma_{\mathcal{Y}}$ encoding the spaces $\mathcal{X}$ and $\mathcal{Y}$ respectively, and natural numbers $m_\mathcal{X}$ and $m_\mathcal{Y}$. $M$ first uses $\mathcal{X}$ and $r$ to enumerate the $d_{\mathcal{X}}(i)\in \mathcal{D}(\mathcal{X}, r)$. As $M$ lists these elements, in parallel, it calculates better and better approximations of the distance between each one and $\bar{X}$, and enumerates the $d_{\mathcal{X}}(i)$ which are found to satisfy $\rho_{\mathcal{X}}(d_i, \bar{X})< 2^{-(r-1)}$. Call this refined list $\hat{d}_{\mathcal{X}}(j)$. $M$ likewise generates a list of $\hat{d}_{\mathcal{Y}}(j)$. $M$ outputs $\hat{d}_{\mathcal{X}}(m_{\mathcal{X}})$ and $\hat{d}_{\mathcal{X}}(m_{\mathcal{Y}})$.

    Note that $d(X, r)$ appears on the list of $\hat{d}_{\mathcal{X}}(i)$, since $d(X, r)\in B_{2^{-r}}(X)\subseteq B_{2^{-(r-1)}}(\bar{X})$; the same is true for $d(Y, s)$. Hence, for the right choice of $m_\mathcal{X}$ and $m_\mathcal{Y}$, $M$ outputs $d(X, r)$ and $d(Y, s)$. Furthermore, by Lemma \ref{lem:uniformityForD},  $m_\mathcal{X}\leq C_{\mathcal{X}, 1}$ and $m_\mathcal{Y}\leq C_{\mathcal{Y}, 1}$. Combining the above information gives the bound
    \begin{align*}
        K^A(d(X, r), d(Y, s))&\leq K^A(\bar{X}, \bar{Y}) + K^A(r) + K^A(s) + K^A(\sigma_{\mathcal{X}}) + K^A(\sigma_{\mathcal{Y}})\\ &\qquad + K^A(m_{\mathcal{X}}) + K^A(m_{\mathcal{Y}}) + c_M\\
        &\leq K^A(\bar{X}, \bar{Y}) + O(\log(r+s)) + K^A(\sigma_{\mathcal{X}}) + K^A(\sigma_{\mathcal{Y}})\\
        &\qquad + O\log(C_{\mathcal{X}, 1}) + O\log(C_{\mathcal{Y}, 1})\\
        &\leq  K^A(\bar{X}, \bar{Y})  + O_{\mathcal{X}, \mathcal{Y}}(\log(r+s)).
    \end{align*}
    This completes our proof of \textit{(i)}. 
    
    \noindent \textit{Proof of (ii):} \textit{(ii)} is easily seen to be a consequence of \textit{(i)}; simply apply \textit{(i)} with $X$ as, say, a rational element of $\mathcal{X}$. It remains only to show that
    $K^A_{r, s}(X\mid Y) = K^A(d(X, r)\mid d(Y, s))\pm O(\log(r + s))$. 

    \noindent \textit{Proof of (iii):} We begin with the direction $K^A_{r, s}(X\mid Y) \leq K^A(d(X, r)\mid d(Y, s))\pm O(\log(r + s))$. Let $P\in B_{2^{-s}}(Y) \cap \mathbb{Q}_\mathcal{Y}$ be such that
    \begin{equation*}
    K^A_{r, s}(X\mid Y) = \min_{Q\in B_{2^{-r}}(X)\cap\mathbb{Q}_{\mathcal{X}}} K^A(Q \mid P).
\end{equation*}
At the cost of a small error term, we can move the (finite) amount of information in $P$ into the oracle and apply \textit{(ii)} relative to $(A, P)$, which gives
 \begin{align*}
    \min_{Q\in B_{2^{-r}}(X)\cap\mathbb{Q}_{\mathcal{X}}} K^A(Q \mid P) &= \min_{Q\in B_{2^{-r}}(X)\cap\mathbb{Q}_{\mathcal{X}}} K_r^{A, P}(Q) + O(\log r)\\
    &= K_r^{A, P}(X) + O(\log r)\\
    &\leq K^{A, P}(d(X, r)) + O(\log r)\\
     &= K^{A}(d(X, r)\mid P) + O(\log r ).
\end{align*}
The proof of the second half of \textit{(i)} shows $d(Y, s)$ is computable from any $2^{-s}$ approximation of $Y$ given little additional information, so $d(Y, s)$ cannot be much more helpful in the computation of $d(X, r)$ than $P$, and in particular we have 
\begin{equation*}
K^{A}(d(X, r)\vert P)\leq K^{A}(d(X, r)\vert d(Y, s)) + O(\log (r+s)),
\end{equation*}
which when combined with the above chain of inequalities completes the proof of the first direction of \textit{(iii)}. 

Moving onto the second direction, for any $P\in B_{2^{-s}}(Y) \cap \mathbb{Q}_\mathcal{Y}$, 

\begin{equation*}
K^A_{r, s}(X\mid Y) \geq \min_{Q\in B_{2^{-r}}(X)\cap\mathbb{Q}_{\mathcal{X}}} K^A(Q\mid P) - O(\log(r + s)).
\end{equation*}
So, in particular, 
\begin{align*}
K^A_{r, s}(X\mid Y) &\geq \min_{Q\in B_{2^{-r}}(X)\cap\mathbb{Q}_{\mathcal{X}}} K^A(Q\mid d(Y, s)) - O(\log(r + s))\\
&= \min_{Q\in B_{2^{-r}}(X)\cap\mathbb{Q}_{\mathcal{X}}} K^{A, d(Y, s)} (Q) - O(\log(r + s))\\
&=  K_r^{A, d(Y, s)} (X) - O(\log(r + s))\\
&\geq  K^{A, d(Y, s)} (d(X, r)) - O(\log(r + s))\\
&=  K^{A} (d(X, r)\mid d(Y, s)) - O(\log(r + s)),
\end{align*}
and the proof is complete. 
    \qed
   \end{proofof}
 
To end this subsection, we note one special case of the symmetry of information which expresses the information content of an element of the affine Grassmannian. 

\begin{cor}\label{cor:specialSoI}
For all $A\subseteq\mathbb{N}$ and $P= V + t$
    \begin{equation*}
    K^A_r(P) = K^A_r(V) + K^A_r(t\mid V) + O(\log r) = K^A_r(t) + K^A_r(V\mid t) + O(\log r).
    \end{equation*}
\end{cor}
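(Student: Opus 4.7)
\medskip

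\noindent\textbf{Proof proposal.} The plan is to reduce the corollary to Proposition \ref{prop:symmetry} by identifying $P = V + t$ with the pair $(V, t) \in \mathcal{G}(n,k) \times \mathbb{R}^n$. More precisely, I will first establish the reduction
\[
K_r^A(P) = K_{r, r}^A(V, t) + O(\log r),
\]
and then apply Proposition \ref{prop:symmetry} to the pair $(V, t)$ twice, once conditioning on $V$ and once conditioning on $t$, to obtain the two stated equalities.

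To prove the reduction, both directions exploit the additive structure of the metric $\rho$ on $\mathcal{A}(n, k)$ together with the fact (established in Section 3.2) that the orthogonal complement basis $e_1(V), \dots, e_{n-k}(V)$ is computable from $V$ and has bounded norms and smallest singular value at least $1/2$. For the direction $K_{r,r}^A(V, t) \leq K_r^A(P) + O(\log r)$, any rational approximation $\bar P = \bar V + \bar t \in B_{2^{-r}}(P)$ (in the metric of $\mathcal{A}(n,k)$) satisfies $\rho(\bar V, V) \leq 2^{-r}$ and $|\bar t - t| \leq 2^{-r}$ by the very definition of $\rho$ on $\mathcal{A}(n,k)$; the rational coefficients describing $\bar P$ together with the encoding $f_A$ computably yield both a rational element of $\mathcal{G}(n,k)$ approximating $V$ and a rational vector in $\mathbb{R}^n$ approximating $t$, at an $O(\log r)$ overhead. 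For the reverse direction, given rational approximations $\bar V$ of $V$ and $\bar t$ of $t$ at precision $r' = r + O(1)$, I would compute $\tilde t = \pi_{\bar V^\perp}(\bar t)$, which is a rational vector in $\bar V^\perp$ (since projections by rational matrices preserve rationality), and express $\tilde t$ in the computable basis $e_i(\bar V)$ to obtain a rational element $\bar P \in \mathcal{A}(n, k)$. The error introduced by the projection is
\[
|\bar t - \tilde t| = |\pi_{\bar V}(\bar t) - \pi_V(t)| \leq |\bar t - t| + \rho(\bar V, V)\,|t| = O\bigl(2^{-r'}(1 + |t|)\bigr),
\]
using $\pi_V(t) = 0$ and the $1$-Lipschitz property of projections. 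Absorbing $|t|$ into the implicit constants (following the convention announced after Lemma \ref{lem:soIBasic}) and choosing $r'$ appropriately gives $\rho(\bar P, P) \leq 2^{-r}$, and the construction is computable at $O(\log r)$ overhead.

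With the reduction in place, the conclusion is immediate: Proposition \ref{prop:symmetry} applied to $(V, t)$ gives
\[
K_{r,r}^A(V, t) = K_r^A(V) + K_{r,r}^A(t \mid V) \pm O(\log r) = K_r^A(t) + K_{r,r}^A(V \mid t) \pm O(\log r),
\]
and combining with the reduction produces the two equalities in the statement. The main technical obstacle is the reduction step itself, specifically verifying that the rational projection $\tilde t = \pi_{\bar V^\perp}(\bar t)$ is close enough to $t$; this is where one needs the fact that $t \in V^\perp$ and that $V, \bar V$ are close, so that the ``forbidden'' component $\pi_{\bar V}(\bar t)$ is small. The rest is a matter of bookkeeping between the encodings $f_G, f_A$ and the computability of the basis $e_i(\cdot)$, both of which are available from Section 3.2.
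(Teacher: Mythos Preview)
Your proposal is correct and follows the approach the paper leaves implicit: the paper states this corollary without proof, calling it ``one special case of the symmetry of information,'' and your reduction $K_r^A(P) = K_{r,r}^A(V,t) + O(\log r)$ followed by two applications of Proposition~\ref{prop:symmetry} is exactly how one makes that remark precise. One minor point: the quantity you bound, $|\bar t - \tilde t|$, is not quite the one you need---you need $|\tilde t - t|$ to control $\rho(\bar P, P)$---but since $|\tilde t - t| \leq |\tilde t - \bar t| + |\bar t - t|$ the same $O\bigl(2^{-r'}(1+|t|)\bigr)$ estimate holds and the argument goes through unchanged.
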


\subsection{Geometric lemmas}

The main geometric lemma we will need shows how well a $k$-plane is determined by (approximations of) points on it. 

\begin{lem}\label{lem:pointsDeterminePlanes}
Let $P = V+t\in \mathcal{A}(n, k)$, and let $p_0, ...,  p_{k}$ be a set of points in $P$. Suppose $p_0, ...,  p_{k}$ is such that $(p_1 - p_{0}), ...,  (p_{k}-p_{0})$ spans $V\in \mathcal{G}(n, k)$. Then for every oracle $A\subseteq\mathbb{N}$ and $r\in \mathbb{N}$ sufficiently large (depending on $p_0, ...,  p_k$),
\begin{equation*}
    K^A_{r}(P, p_0, ...,  p_k)\leq K^A_r(p_0, ..., p_k) + O_{n, k, \sigma, \vert p_0\vert}(\log r),
\end{equation*}
where $\sigma$ is the smallest nonzero singular value of the matrix with $(y_1-y_{k+1}), ...,  (y_{k}-y_{k+1})$ as its columns. 
\end{lem}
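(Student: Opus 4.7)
The plan is to describe a Turing machine $M$ that, from rational approximations $\bar p_0, \ldots, \bar p_k$ of the points $p_0, \ldots, p_k$ at precision $r + c$ (with $c$ a fixed constant depending on $n, k, \sigma,$ and $\vert p_0\vert$), outputs a rational element of $\mathcal{A}(n, k)$ within $\rho$-distance $2^{-r}$ of $P$. Once such an $M$ is in hand, the complexity bound follows by a standard assembly: choose a $K^A$-optimal rational approximation of the tuple of points at precision $r+c$, pair its encoding with the output of $M$, and invoke Proposition \ref{prop:caseLutz} to convert complexity at precision $r+c$ into complexity at precision $r$, absorbing the additive $n(k+1)c$ into the implicit constant.

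The machine $M$ proceeds in four steps. First, it forms the rational difference vectors $w_i = \bar p_i - \bar p_0$ for $i = 1, \ldots, k$. Provided $r$ is large enough that $2^{-(r+c)}$ is much smaller than $\sigma$, the $w_i$ remain linearly independent and span some $V' \in \mathcal{G}(n, k)$; by Lemma \ref{lem:rationalColumns}, $V' \in \Q^{n\times n}$, and the projection matrix onto $V'$ is computed from the $w_i$ via $\pi_{V'} = A(A^{\text{T}} A)^{-1} A^{\text{T}}$. Lemma \ref{lem:singularValue}, combined with the equivalence of $m$ and $\rho$, then gives $\rho(V, V') \leq 2^{-(r+2)}$ for the chosen $c$. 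Second, $M$ computes the rational vector $\hat t = \bar p_0 - \pi_{V'}\bar p_0 \in (V')^\perp$. Since $p_0 \in P$ forces $t = p_0 - \pi_V p_0$, a short triangle-inequality calculation using $\|\pi_V - \pi_{V'}\|_{\text{op}} = \rho(V, V')$ yields $\vert t - \hat t \vert \leq 2 \cdot 2^{-(r+c)} + \rho(V, V')\vert p_0\vert \leq 2^{-(r+2)}$. Third, $M$ runs the Turing machine $T$ from Section 3.2 on input $V'$ to generate the basis $e_1(V'), \ldots, e_{n-k}(V')$ of $(V')^\perp$ supplied by Lemma \ref{lem:niceComputableBasis}. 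Fourth, it solves the rational linear system $\hat t = \sum_i v_i e_i(V')$ for the rational coefficients $v_i$ and outputs $(V', v_1, \ldots, v_{n-k})$, which encodes a rational element of $\mathcal{A}(n, k)$ within $\rho(V, V') + \vert t - \hat t\vert \leq 2^{-r}$ of $P$.

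To derive the bound, let $(\bar p_0, \ldots, \bar p_k)$ be a $K^A$-optimal rational approximation of the points at precision $r+c$. Running $M$ on this input together with the integer $r$ produces the desired $\bar P$; the description of $M$ itself contributes only an $O(1)$ constant depending on $n$ and $k$, while encoding $r$ and performing the standard pairing contribute only $O(\log r)$ bits. Hence
\begin{equation*}
K^A_r(P, p_0, \ldots, p_k) \leq K^A_{r+c}(p_0, \ldots, p_k) + O(\log r),
\end{equation*}
and Proposition \ref{prop:caseLutz} with $s = c$ closes the loop.

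The main obstacle is tracking the error propagation through the nonlinear steps of $M$, especially the selection of the basis $e_i(V')$ by $T$, which need not vary continuously with $V'$. The uniform conditioning guaranteed by Lemma \ref{lem:niceComputableBasis} (smallest singular value at least $\tfrac{1}{2}$) keeps the final coordinate computation well-behaved regardless of which basis $T$ happens to output, and the spanning singular value $\sigma$ of the original configuration enters only as a threshold on how small $2^{-r}$ must be before the approximate differences can be trusted to span a $k$-plane at all. These two features are precisely what force the ``for $r$ sufficiently large'' hypothesis in the statement.
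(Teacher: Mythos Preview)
Your proof is correct and follows essentially the same approach as the paper's: build a Turing machine that computes the projection matrix $A(A^{\text{T}}A)^{-1}A^{\text{T}}$ from the approximated differences, estimate the translation as $\bar p_0 - \pi_{V'}\bar p_0$, express this in the computable basis of $(V')^{\perp}$, and absorb the constant precision shift via Proposition~\ref{prop:caseLutz}. Your only minor variation is solving for the basis coefficients exactly rather than searching an enumeration of $\mathbb{Q}^{n-k}$ for a nearby vector, which is not a substantive difference.
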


\begin{remark} In our applications, it will be necessary to upper bound $K^A_{r}(P, p_0, ...,  p_k)$, which is why we have stated the lemma in this way. However, clearly 
\begin{equation*}
    K^A_{r}(P)\leq K^A_{r}(P, p_0, ...,  p_k)+O(\log r),
\end{equation*}
so Lemma \ref{lem:pointsDeterminePlanes} also establishes the more natural statement that 
\begin{equation*}
K^A_{r}(P)\leq K^A_r(p_0, ..., p_k) + O_{n, k, \sigma, \vert p_0\vert}(\log r).
\end{equation*}
\end{remark}

\begin{proof}
    It suffices to show that a precision $r$ approximation of the points $p_0, ..., p_k$ (which we denote $\bar{p}_0, ..., \bar{p}_k$) and a small amount of additional information is enough to compute a nearly precision $r$ approximation of $P$. We define a Turing machine $M$ that performs this computation. 

    $M$ takes as its input $(\bar{p}_0, ..., \bar{p}_k)$, $r$, $n$, and $k$. First, $M$ computes $\bar{p}_1 - \bar{p}_0, ..., \bar{p}_k - p_0$. Then, $M$ determines the rational projection matrix $\bar{V} = A(r)(A(r)^{\text{T}} A(r))^{-1}A(r)^{\text{T}}$, where $A(r)$'s columns are the vectors $\bar{p}_i - \bar{p}_0$. Note that the above inverse exists (provided that $r$ is sufficiently large) since the vectors $p_i - p_0$ are linearly independent. It follows immediately by Lemma \ref{lem:singularValue} that 
    \begin{equation}\label{eq:firstHalfGeo}
        \rho(V, \bar{V})\leq  C_{n, k} n \sigma^{-1} 2^{-(r-1)}.
    \end{equation}
    It remains to approximate $t$, and we can employ $\bar{V}$ to accomplish this. Specifically, $M$ calculates $\bar{t}^\prime =\bar{p}_0 - \bar{V}\bar{p}_0$. Note that by the triangle inequality, the equivalence of the metrics $m$ and $\rho$, and \eqref{eq:firstHalfGeo},
    \begin{align*}
       \vert t - \bar{t}^\prime\vert &\leq\vert p_0 - \bar{p}_0\vert  + \vert V p_0 - \bar{V}\bar{p}_0\vert \\
       &\leq \vert p_0 - \bar{p}_0 \vert + \vert V p_0  - V \bar{p}_0\vert + \vert V \bar{p}_0 - \bar{V} \bar{p}_0\vert\\
       &\leq 2^{-r} + 2^{-r} + 2 \vert p_0\vert  C_{n, k} \rho(V, \bar{V})\\
       &\leq (1 + 2 \vert p_0\vert  (C_{n, k})^2 n \sigma^{-1}) 2^{-(r-1)}.
    \end{align*}
We are not quite done, since $\bar{t}^\prime\in \mathbb{Q}^n$, but in our formalization, we need the \emph{coordinates} of $\bar{t}^\prime$ in the basis we have associated to $\bar{V}^\perp$. However, this is no major obstruction, since $M$ can compute the basis for $\bar{V}^\perp$ and sequentially test the elements of an enumeration of $\mathbb{Q}^{n-k}$ until it finds coordinates that give a vector $\bar{t}$ within $2^{-r}$ of $\bar{t}^\prime$. Finally, $M$ outputs $\bar{P} = (\bar{V}, \bar{t})$. We have that 

\begin{align*}
    \rho(P, \bar{P)}&= \rho(V, \bar{V}) + \vert t - \bar{t}\vert \\
    &\leq \rho(V, \bar{V})+ \vert \bar{t} - \bar{t}^\prime \vert + \vert t - \bar{t}^\prime\vert   \\
    &\leq \left(C_{n, k} n \sigma^{-1} + 2 +  2 \vert p_0\vert  (C_{n, k})^2 n \sigma^{-1}\right) 2^{-(r-1)}\\
    &:= C_{n, k, \sigma, \vert p_0\vert} 2^{-r}.
\end{align*}
This inequality and the construction of $M$ imply
\begin{equation}\label{eq:endOfFirstGeo}
    K^A_{r - \lceil \log C_{n, k, \sigma, \vert p_0\vert}\rceil}(P, p_0, ...,  p_k)\leq K^A_r(p_0, ..., p_k) + K^A(r) + K^A(n) + K^A(k) + c_M.
\end{equation}
To finish the proof, we need only apply \eqref{eq:endOfFirstGeo} at a higher precision, Proposition \ref{prop:caseLutz}, and some basic bounds on the smaller terms. In particular, letting $r^\prime = r + \lceil \log C_{n, k, \sigma, \vert p_0\vert}\rceil$, we have
\begin{align*}
    K^A_r(P, p_0, ...,  p_k)&\leq K^A_{r^\prime}(p_0, ..., p_k) + K^A(r) + K^A(n) + K^A(k) + c_M\\
    &\leq K^A_{r}(p_0, ..., p_k) + K^A_{r^\prime, r}(p_0, ..., p_k) + K^A(r) + K^A(n) + K^A(k) + c_M\\
    &\leq K^A_{r}(p_0, ..., p_k) + O_{n, k, \sigma, \vert p_0\vert}(\log r).
\end{align*}
\end{proof}
This lemma has as a consequence that an approximation of a $k$-plane allows us to determine an approximation of its orthogonal complement. More precisely
\begin{cor}\label{cor:planeDeterminesComplement}
    Let $V\in\mathcal{G}(n, k)$, a precision $r$, and $A\subseteq\mathbb{N}$ be given. 
\begin{equation*}
    K_r^A(V^\perp)\leq K_r^A(V) + O_{n, k}(\log r).
\end{equation*}
\end{cor}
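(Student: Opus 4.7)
The plan is to exploit the elementary algebraic identity $\pi_{V^\perp} = I - \pi_V$, which says that the orthogonal complement of a $k$-plane through the origin is determined by an essentially trivial computation on its projection matrix. This identity transports rational approximations of $V$ in $\mathcal{G}(n,k)$ to rational approximations of $V^\perp$ in $\mathcal{G}(n, n-k)$ with no loss in precision, which is the key geometric content behind the corollary.

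Concretely, I would describe a Turing machine $M$ that takes as input $n$, $k$, $r$, and a string $\sigma$ with $f_G(\sigma) = \bar{V}$, where $\bar{V}\in\mathcal{G}(n,k)\cap\mathbb{Q}^{n\times n}$ is a $K^A$-minimizing rational approximation of $V$ at precision $r$. The machine decodes $\sigma$ via the $\mathcal{G}(n,k)$ encoding, computes the rational matrix $I - \bar{V}$, verifies (using only operations on rationals, as in Section 3.1) that it is idempotent, symmetric, and has trace $n-k$, and then uses the $\mathcal{G}(n, n-k)$ encoding to output a string $\tau$ with $f_G(\tau) = I - \bar{V}$.

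To check that this string is a valid witness for $K_r^A(V^\perp)$, note that for any unit vector $x\in\mathbb{R}^n$,
\[
|\pi_{V^\perp}x - \pi_{\bar V^\perp}x| = |(I-\pi_V)x - (I-\pi_{\bar V})x| = |\pi_V x - \pi_{\bar V}x|,
\]
so $\rho(V^\perp, \bar V^\perp) = \rho(V, \bar V) < 2^{-r}$. Combining this with the description of $M$ gives
\[
K_r^A(V^\perp) \leq K_r^A(V) + K^A(n) + K^A(k) + K^A(r) + c_M \leq K_r^A(V) + O_{n,k}(\log r),
\]
since $n$ and $k$ contribute only constants depending on themselves, $r$ contributes $O(\log r)$, and $c_M$ is an absolute constant. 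I do not foresee any real obstacle; the only mild subtlety is to keep straight that the encoding $f_G$ on the output side refers to $\mathcal{G}(n, n-k)$ rather than $\mathcal{G}(n,k)$, and there is no need to invoke Proposition \ref{prop:caseLutz} because the distance is preserved exactly rather than degraded.
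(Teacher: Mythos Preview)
Your proof is correct and takes a genuinely different, more elementary route than the paper. The paper proceeds by computing vectors $e_i - \bar V e_i$ lying in $V^\perp$ from an approximation $\bar V$ of $V$, establishing via a compactness argument a uniform lower bound $\hat C_{n,k}$ on the smallest singular value of a suitable subcollection of these vectors, and then invoking Lemma~\ref{lem:pointsDeterminePlanes} to pass from the spanning vectors to the plane $V^\perp$. Your argument instead exploits directly that $V \mapsto I - V$ is an isometry from $(\mathcal{G}(n,k),\rho)$ to $(\mathcal{G}(n,n-k),\rho)$ which restricts to a computable bijection on the rational projection matrices, so the corollary follows without any singular value analysis or appeal to Lemma~\ref{lem:pointsDeterminePlanes}. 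Your route is cleaner for this particular statement (indeed, your machine does not actually need $r$ as input, so the error term could be sharpened to $O_{n,k}(1)$); the paper's route has the modest advantage of reusing the same template as its other geometric lemmas and of exercising Lemma~\ref{lem:pointsDeterminePlanes} in a simple instance.
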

The idea is simply that we can use $V$ to find vectors in $V^\perp$, and Lemma \ref{lem:pointsDeterminePlanes} indicates these vectors are enough to determine $V^\perp$. Formally,
\begin{proof}
    Let $n, k$ be given. Let $\sigma(v_1, ..., v_m)$ denote the smallest singular value of the matrix with $v_1, ..., v_m$ as columns and let $e_i$ denote the $i$th standard basis vector. It is straightforward to observe (say, by using the compactness of $\mathcal{G}(n, k)$) that there is some constant $\hat{C}_{n, k}$ such that,
    \begin{equation}\label{eq:complementSV}
    \inf_{V\in\mathcal{G}(n, k)}\max_{1\leq i_1< ...<i_{n-k}\leq n}\sigma(e_{i_1} - V e_{i_1}, ..., e_{i_1} - V e_{i_{n-k}})\geq \hat{C}_{n, k}>0.
    \end{equation}
    Let $\bar{V}$ be a precision $r$ approximation of $V$. There is a Turing machine $M$ that does the following. Given $n$, $k$, $\bar{V}$, and a string $\tau$ encoding which $n-k$-tuple achieves the maximum for $V$ in \eqref{eq:complementSV}, $M$ outputs the vectors $e_{i_1} - \bar{V} e_{i_1}, ..., e_{i_{n-k}} - \bar{V} e_{i_{n-k}}$. By the definition of the metric on $V$, 
    \begin{equation*}
    \vert (e_{i_1} - V e_{i_1}) - (e_{i_1} - \bar{V} e_{i_1})\vert \leq 2^{-r}.
    \end{equation*}
    Hence, 
    \begin{align*}
        K_r^A(0, e_{i_1} - V e_{i_1}, ..., e_{i_{n-k}} - V e_{i_{n-k}})&\leq K_r^A(V) + K_r^A(\tau) + O_{n, k}(\log r)\\
        &= K_r^A(V) + O_{n, k}(\log r),
    \end{align*}
    where the second line holds because the number of possible values of $\tau$ depends only on $n$ and $k$. The conclusion follows from applying Lemma \ref{lem:pointsDeterminePlanes} to the collection of vectors $0, e_{i_1} - V e_{i_1}, ..., e_{i_{n-k}} - V e_{i_{n-k}}$, which span $V^\perp$. Note that after this application, the logarithmic error term will still only depends on $n$ and $k$ because of the uniform bound of \eqref{eq:complementSV} and the fact that these vectors have norm $\leq 1$.
\end{proof}

We will also have need of the following simple fact, that points on a $k$-plane cannot have complexity too high given an approximation of that $k$-plane. 

\begin{lem}\label{lem:pointsOnAPlane}
    Suppose $P=V+t\in \mathcal{A}(n, k)$, and let $x\in P$. Then
\begin{equation*}
       K_r^A(x\mid P)\leq k r + O(\log r). 
\end{equation*}
\end{lem}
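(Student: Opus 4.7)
The intuition is that $P$ is a $k$-dimensional affine subspace, so any point on $P$ is determined by $k$ real coordinates relative to a basis of $V$, and a rational approximation of each coordinate at precision $r$ costs only $r+O(1)$ bits once the basis is in hand. The plan is therefore to describe a Turing machine $M$ that takes as input a rational approximation $\bar P$ of $P$ (supplied through the conditioning) together with $k$ rational coefficients $\bar\alpha_1,\ldots,\bar\alpha_k$, and outputs a rational vector within $2^{-r}$ of $x$. The bulk of the description length will come from the coefficients, yielding $kr + O(\log r)$.

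To set up the basis, I would argue exactly as in the proof of Corollary \ref{cor:planeDeterminesComplement}: there exist indices $1\le i_1<\cdots<i_k\le n$ (depending only on $V$) such that the vectors $v_j:=V e_{i_j}$ form a basis of $V$ whose smallest singular value is at least some constant $\hat C_{n,k}>0$. Since $V$ is fixed, the tuple $(i_1,\ldots,i_k)$ can be hardcoded into $M$. Expanding $x-t=\sum_{j=1}^k \alpha_j v_j$ then yields real coefficients with $|\alpha_j|\le \hat C_{n,k}^{-1}|x-t|$, a quantity depending on $x$ and $P$ but not on $r$.

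Given $\bar P=(\bar V,\bar t)$ with $\rho(\bar P,P)<2^{-r}$, the machine $M$ reconstructs $\bar t\in \mathbb{R}^n$ from the encoding (using the computable basis of $\bar V^\perp$ from Section 3.2) and computes $\bar v_j:=\bar V e_{i_j}$, which by the definition of the metric on $\mathcal{G}(n,k)$ satisfies $|\bar v_j - v_j|\le \rho(\bar V,V)<2^{-r}$. It then outputs $\bar x := \bar t + \sum_{j=1}^k \bar\alpha_j\, \bar v_j$. A routine triangle-inequality estimate, using the uniform bound on $|\alpha_j|$, shows that if each $\bar\alpha_j$ approximates $\alpha_j$ to precision $r+c$ for a suitable constant $c=c(n,k,x,P)$, then $|\bar x-x|<2^{-r}$. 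Because the $\alpha_j$ are bounded by a constant, each $\bar\alpha_j$ can be encoded as a dyadic rational in $r+O(1)$ bits, so the total program length is $kr+O(\log r)$.

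The main (and essentially only) obstacle is careful bookkeeping: ensuring that the error propagation through the sum $\bar t + \sum \bar\alpha_j \bar v_j$ works out, and that moving between the input precision $r$ and the intermediate precision $r+c$ used for the coefficients contributes at most a logarithmic term. The latter can be handled as at the end of the proof of Lemma \ref{lem:pointsDeterminePlanes}, invoking Proposition \ref{prop:caseLutz} to absorb constant shifts in precision.
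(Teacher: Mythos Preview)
Your proposal is correct and is essentially the paper's own argument, repackaged: the paper picks a standard coordinate $k$-plane $e$ with $\pi_V|_e$ bijective, writes $x=Vx'+t$ for some $x'\in e$, and computes $\bar V q+\bar t$ from a precision-$r$ approximation $q$ of $x'$, concluding via $K_r^A(x')\le kr+O(\log r)$ since $x'$ has only $k$ nonzero coordinates. Your coefficients $\alpha_j$ are precisely those nonzero coordinates (with $e=\mathrm{span}(e_{i_1},\ldots,e_{i_k})$), and your singular-value bookkeeping to bound $|\alpha_j|$ is, if anything, slightly more explicit than the paper's implicit bound on $|x'|$; the concluding use of Proposition~\ref{prop:caseLutz} to absorb the constant precision shift is identical.
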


\begin{proof}
    Let $e$ be one of the standard coordinate $k$-planes such that the projection of $e$ onto $V$ has dimension $k$. Then if $x\in P$, there is some $x^\prime\in e$ such that $V x^\prime+t = x$. Let $r$ be sufficiently large and let $q$ be a precision $2^{-r}$ approximation of $x^\prime$ and $\bar{P} = \bar{V} + \bar{t}$ be a precision $r$ approximation of $P$. Using the fact that projections are $1$-Lipshitz and the definition of the metric on $\mathcal{A}(n, k)$, we have
    \begin{align*}
        \vert x - (\bar{V} q + \bar{t}) \vert &= \vert Vx^\prime+t - (\bar{V} q + \bar{t}) \vert\\
        &\leq \vert V x^\prime - V q\vert + \vert V q + \bar{V} q\vert + \vert t - \bar{t}\vert \\
        &\leq 2^{-r} + 2^{-r}\vert q \vert  + 2^{-r}\\
        &\leq 2^{-(r-2)}(1 + \vert x \vert).
    \end{align*}
    Clearly, there is a Turing machine that can perform the calculation $(\bar{V} q + \bar{t})$ given these approximations. Since precision $r$ approximations of $x^\prime$ and $P$ are sufficient to compute a precision $r^\prime = r-2-\log(1 + \vert x \vert)$ approximation of $x$, in conjunction with Proposition \ref{prop:caseLutz} we have 
\begin{align*}
   K_r^A(x\mid P)&\leq K^A_{r^\prime, r}(x\mid P) + O(\log r)\\
    &\leq K^A_r(x^\prime) + O(\log r).
\end{align*}
Since $x^\prime\in e$, all but $k$ of its coordinates are $0$, hence 
\begin{equation*}
    K_r^A(x^\prime)\leq k r + O(\log r),
\end{equation*}
and the conclusion follows. 
\end{proof}

Effective dimension has been used in the context of lines in higher dimensions; for instance, N. Lutz and Stull proved results for projections onto lines in $\mathbb{R}^n$ \cite{LutStu18Projections}. To end this section, we prove that the notion of complexity on the Grassmannian that we have introduced agrees with more the more intuitive notion for lines and hyperplanes. In particular, for lines, we have the following.
\begin{prop}\label{prop:lineEquiv}
    Let $\ell=V+t\in \mathcal{A}(n, 1)$ be the line given by the equation $\ell(x) = (x, a_1 x + b_1, ...,  a_{n-1} x + b_{n-1})$. Then for every $A\subseteq\mathbb{N}$ and precision $r$, 
    \begin{equation*}
        K^A_r(\ell) = K^A_r(a_1, b_1, ..., a_{n-1}, b_{n-1}) \pm O(\log r).
    \end{equation*}
\end{prop}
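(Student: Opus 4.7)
The plan is to prove the two inequalities separately, each with $O(\log r)$ error, by exhibiting Turing machines that convert between rational approximations of $\ell$ and of the parameter tuple $(a_1,b_1,\ldots,a_{n-1},b_{n-1})$.

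For the inequality $K_r^A(\ell) \leq K_r^A(a_1, b_1, \ldots, a_{n-1}, b_{n-1}) + O(\log r)$, I would choose two specific points on $\ell$ that a Turing machine can produce directly from the parameters: $p_0 = \ell(0) = (0,b_1,\ldots,b_{n-1})$ and $p_1 = \ell(1) = (1, a_1+b_1,\ldots,a_{n-1}+b_{n-1})$. A precision-$r$ approximation of the parameter tuple yields precision-$(r-c)$ approximations of $p_0$ and $p_1$ via a fixed machine, so $K_r^A(p_0,p_1) \leq K_r^A(a_1,b_1,\ldots,a_{n-1},b_{n-1}) + O(\log r)$ after applying Proposition \ref{prop:caseLutz}. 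Since $p_1-p_0 = (1,a_1,\ldots,a_{n-1})$ spans $V$, the $1\times 1$ ``singular value'' of the associated column matrix is $|p_1-p_0|\geq 1$, so Lemma \ref{lem:pointsDeterminePlanes} yields $K_r^A(\ell) \leq K_r^A(\ell,p_0,p_1) \leq K_r^A(p_0,p_1) + O(\log r)$ with no parameter-dependent multiplicative blow-up in the log term.

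For the reverse inequality, I would recover the parameters from a precision-$r$ approximation $\bar{\ell}=(\bar V,\bar t)$ of $\ell$. The direction of $\ell$ is proportional to $v = (1,a_1,\ldots,a_{n-1})$, so $Ve_1 = v/|v|^2$ and in particular $(Ve_1)_1 = 1/|v|^2 > 0$. A Turing machine computes $\bar V e_1$ and divides by its first coordinate to obtain an approximation of $v$, hence of the $a_i$'s. To recover the $b_i$'s, note that $t$ lies on $\ell$ (it is the foot of the perpendicular from the origin) and $v$ has first coordinate $1$, so the point $t - t_1 v$ is still on $\ell$ and equals $(0,b_1,\ldots,b_{n-1})$. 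Performing the analogous computation with $\bar v$ and with $\bar t$ (first converting the coordinates of $\bar t$ from the computable basis for $\bar V^\perp$ back into the standard basis) produces the desired approximation of the $b_i$'s. Combining these yields an approximation of the full parameter tuple at precision $r-c$, and another invocation of Proposition \ref{prop:caseLutz} absorbs the constant precision loss.

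The main obstacle is tracking the precision loss through the division by $(\bar V e_1)_1$, since a small perturbation can be amplified by a factor of $|v|^2$. However, $|v|^2 = 1 + \sum_i a_i^2$ is an $\ell$-dependent constant, so this amplification contributes only a fixed additive $O(\log|v|)$ term to the precision budget, which is absorbed into the implicit constant of the $O(\log r)$ error -- consistent with the convention used throughout Section 3 that hidden constants may depend on the object under consideration but not on $r$.
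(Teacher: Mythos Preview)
Your proposal is correct. For the first inequality, your argument is essentially identical to the paper's: both apply Lemma~\ref{lem:pointsDeterminePlanes} to the points $p_0=\ell(0)$ and $p_1=\ell(1)$, noting that these are computable from the parameter tuple with bounded precision loss.

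For the reverse inequality the two arguments diverge. You extract the parameters \emph{directly} from the projection data: compute $\bar V e_1$, normalize by its first coordinate to recover the direction vector $(1,a_1,\ldots,a_{n-1})$, then form $t - t_1 v$ to read off the intercepts. The paper instead takes a two-step route: it first samples two points $p_1=Ve_i+t$ and $p_2=-Ve_i+t$ on $\ell$ (choosing $i$ to maximize $|Ve_j|$, which gives the uniform separation $|p_1-p_2|\geq 1/(2\sqrt{n})$), and then invokes a separate ``two points determine the slope--intercept form'' computation (equation~\eqref{eq:lineEquivDef1}). Both routes incur an $\ell$-dependent constant---yours through the factor $|v|^2$ in the division, the paper's through the same quantity hidden in the first-coordinate difference of $p_1-p_2$---so neither is more uniform than the other. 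Your approach is more economical; the paper's is more modular, mirroring the use of Lemma~\ref{lem:pointsDeterminePlanes} in the forward direction and reusing a ``points to parameters'' step that generalizes cleanly to the hyperplane case in Proposition~\ref{prop:hyperplaneEquiv}.
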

\begin{proof}

    First, note that the direction $K^A_r(\ell)\leq K^A_r(a_1, b_1, ..., a_{n-1}, b_{n-1}) + O(\log r)$ follows from Lemma \ref{lem:pointsDeterminePlanes} applied to the points $p_0 = (0, b_1, ..., b_{n-1})$ and $p_1 = (1, a_1, ..., a_{n-1})$, along with the observation that
    \begin{equation*}
        K_r^A(0, b_1, ...,  b_n, 1, a_1 + b_1, ..., a_{n-1} + b_{n-1})\leq K_r^A(a_1, b_1, ..., a_{n-1}, b_{n-1}) + O(\log r).
    \end{equation*}

    We now prove the other direction. Assuming that $p_1$ and $p_2$ are on $\ell$, it is straightforward to observe that
    \begin{equation}\label{eq:lineEquivDef1}
    K_{r- \log \vert p_1 - p_2\vert}^A(a_1, b_1, ..., a_{n-1}, b_{n-1})\leq K_r^A(p_1, p_2) + O(\log r).
    \end{equation}
    Briefly, this inequality holds because there is a Turing machine that uses precision $r$ approximations of $p_1$ and $p_2$ to determine an approximation of the line $\ell(s) = p_1 + s (p_2 - p_1)$ and then solves for the values of $s$ that make the first coordinate $0$ and $1$. So, it suffices to show that there exist $p_1, p_2\in\ell$ with approximations that are easily computable from approximations of $\ell$. Let $i$ maximize the norm of the projections of the standard basis vectors $e_j$ onto $V$. Let $p_1 = V e_i +t$ and $p_2 = -V e_i + t$ and note that $\vert p_1 - p_2\vert\geq \frac{1}{2\sqrt{n}}$.
    
    Given a precision $r$ approximation $\bar{\ell} = \bar{V} + \bar{t}$ of $\ell$ along with $i, r$, and $n$, there exists a Turing machine $M$ that calculates $\bar{p}_1 = \bar{V} e_i + \bar{t}^\prime$ and $\bar{p}_2= - \bar{V} e_i + \bar{t}^\prime$, where $\bar{t}^\prime$ is some $2^{-r}$ approximation of $\bar{t}$. Then, by the definition of $\rho$, 
    \begin{align*}
        \vert p_1 - \bar{p}_1\vert&\leq \vert V e_i - \bar{V} e_i\vert + 2^{-(r-1)}\\
        &\leq \rho(V, \bar{V}) + 2^{-(r-1)}\\
        &\leq 2^{-(r-2)},
    \end{align*}
    and likewise for $p_2$. Hence,
    \begin{align*}
        K_{r}^A(p_1, p_2)&\leq K^A_{r-3}(p_1, p_2) + O(\log r)\\
        &\leq K_r^A(\ell) + K^A(r) + K^A(i) + K^A(n) + c_M + O(\log r)\\
        &\leq K_r^A(\ell) + O(\log r).
    \end{align*}
    Combining the above with \eqref{eq:lineEquivDef1} and recalling that the distance between $p_1$ and $p_2$ depended only on $n$ completes the proof. 
    \end{proof}
    
Now we state an analogous result for hyperplanes.
\begin{prop}\label{prop:hyperplaneEquiv}
       Let $P = V+t\in \mathcal{A}(n, n-1)$ be the hyperplane given by $(x_1, ..., x_{n-1}) \mapsto (x_1, ..., x_{n-1}, a_1 x_1 + ... + a_{n-1} x_{n-1} + b)$. Then for every $A\subseteq\mathbb{N}$ and precision $r$, 
    \begin{equation*}
        K^A_r(P) = K^A_r(a_1, ..., a_{n-1}, b) \pm O(\log r).
        \end{equation*}
\end{prop}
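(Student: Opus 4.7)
The plan is to mirror the two-directional argument used in the proof of Proposition \ref{prop:lineEquiv}.

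For the direction $K^A_r(P) \leq K^A_r(a_1, \ldots, a_{n-1}, b) + O(\log r)$, the natural choice is to apply Lemma \ref{lem:pointsDeterminePlanes} to the $n$ points $p_0 = (0, \ldots, 0, b)$ and $p_i = e_i + (a_i + b)e_n$ for $i = 1, \ldots, n-1$, which are the images of $0$ and the standard basis of $\mathbb{R}^{n-1}$ under the given parametrization. The differences $p_i - p_0 = e_i + a_i e_n$ restrict to the standard basis on the first $n-1$ coordinates, so they span $V$ and the column matrix they form satisfies $M^{\text{T}}M = I + aa^{\text{T}}$ with $a = (a_1, \ldots, a_{n-1})^{\text{T}}$, so its smallest singular value is exactly $1$. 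Rational approximations of $p_0, \ldots, p_{n-1}$ are trivially computable from rational approximations of $(a_1, \ldots, a_{n-1}, b)$, so $K^A_r(p_0, \ldots, p_{n-1}) \leq K^A_r(a_1, \ldots, a_{n-1}, b) + O(\log r)$; Lemma \ref{lem:pointsDeterminePlanes} then upgrades this to a bound on $K^A_r(P)$, with an implicit constant depending on $n$ and $|b|$.

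For the reverse direction, the plan is to show that a precision-$r$ approximation of the same points $p_0, \ldots, p_{n-1}$ is computable from a precision-$r$ approximation of $P$ with only a constant loss of precision; the coordinates $b$ and $a_i$ are then read off as $\bar{p}_0 \cdot e_n$ and $(\bar{p}_i - \bar{p}_0) \cdot e_n$ respectively. Given $\bar{P} = \bar{V} + \bar{t}$, I would first invoke Corollary \ref{cor:planeDeterminesComplement} to obtain an approximation $\overline{V^\perp}$ of $V^\perp$, then extract a rational unit vector $\bar{u}$ close to the true unit normal $u \propto (-a_1, \ldots, -a_{n-1}, 1)$, choosing the sign so that the last coordinate is positive. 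Each $p_i$ is the unique intersection of $P$ with the ``vertical'' line $\{(e_i, s) : s \in \mathbb{R}\}$ (using the convention $e_0 = 0$), and the approximate analog is obtained by solving the scalar equation $\bar{u}_i + \bar{u}_n s = \bar{t} \cdot \bar{u}$ for $\bar{s}_i$ and setting $\bar{p}_i = (e_i, \bar{s}_i)$. A Turing machine performs all of these steps, and the error analysis mirrors the one at the end of the proof of Proposition \ref{prop:lineEquiv}.

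The main potential obstacle is the division by $\bar{u}_n$ in the last step. Because $P$ is presented as the graph of a function of $x_1, \ldots, x_{n-1}$, the true value $u_n = (1 + |a|^2)^{-1/2}$ is strictly positive; for $r$ sufficiently large, $|\bar{u}_n|$ is bounded below by a positive constant depending only on $P$, so the division degrades precision by at most a fixed additive constant. Tracking this constant together with the constants introduced by Corollary \ref{cor:planeDeterminesComplement} and by normalizing $\bar{u}$—again absorbing them via a precision shift and Proposition \ref{prop:caseLutz}, exactly as in Proposition \ref{prop:lineEquiv}—yields the desired $O(\log r)$ error.
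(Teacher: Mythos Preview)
Your forward direction is identical to the paper's. For the reverse direction, both you and the paper eventually recover the canonical points $(0,\ldots,0,b)$ and $(e_i,a_i+b)$, but through different intermediaries. The paper first produces \emph{some} spanning tuple directly from the projection data---namely $\bar p_0=\bar t$ and $\bar p_j=\bar V e_j+\bar t$ for $j\ne i$, with $i$ chosen to maximize the smallest singular value of the resulting column matrix---and only then parametrizes the hyperplane to read off $(a,b)$; the error is controlled by Lemma~\ref{lem:singularValue} together with a lower bound on that singular value which is uniform over all hyperplanes of this form. Your route through the unit normal via Corollary~\ref{cor:planeDeterminesComplement} and a one-variable intersection with each vertical line is also correct, and more direct in that it lands immediately on the canonical points. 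The price is the division by $\bar u_n$, which as you note is harmless because $u_n=(1+|a|^2)^{-1/2}$ is a fixed positive constant of $P$. The paper's route avoids any normalization or division and earns a uniformity you do not need here; yours trades that for a shorter computation. Both yield the same $O(\log r)$ conclusion.
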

\begin{proof}
    As in the proof of Proposition \ref{prop:lineEquiv}, the direction $K^A_r(P) \leq K^A_r(a_1,  ..., a_{n-1}, b) + O(\log r)$ follows from Lemma \ref{lem:pointsDeterminePlanes}; in this case we apply the lemma to the points 
    \begin{equation*}
    \{p_0, p_1, ..., p_{n-1}\} = \{(0, ..., 0, b), (1, 0, ..., 0, a_1 + b), ...,(0, ..., 0, 1, a_{n-1} + b)\}. 
    \end{equation*}
    
    The proof of the other direction is also very similar to that of Proposition \ref{prop:lineEquiv}. Provided that the vectors $p_0 - p_1, ..., p_0 - p_{n-1}$ span $V$, we have 
    \begin{equation}\label{eq:hyperplaneEquiv1}
        K_{r- \log  n\sigma^{-1}}^A(a_1, ..., a_{n-1}, b) \leq K^A_r(p_0, ..., p_{n-1}) + O(\log r),
    \end{equation}
    where $\sigma$ is the smallest singular value of the column matrix formed by the vectors $p_i - p_0$. The idea is essentially the same as in Proposition \ref{prop:lineEquiv}: a Turing machine can use approximations of these vectors to give a parameterization of the hyperplane and then generate the points in the first direction of the proof, from which it can read off $a_1, ..., a_{n-1}, b$. In this case, we can apply Lemma \ref{lem:singularValue} to bound the error in the machine's approximation of the hyperplane. 

    It remains to generate $p_0, ..., p_{n-1}$ from $P$. Let $i$ maximize the smallest singular value of the column matrix formed by $V e_1, ..., V e_{j-1}, V e_{j+1}, V e_n$ and note that we can lower bound this maximal smallest singular value in terms of $n$ uniformly for all hyperplanes $P$. There is a Turing machine $M$ that takes $\bar{P} = \bar{V} + \bar{t}, r, i$, and $n$ as inputs and calculates the vectors $\bar{p}_0 = \bar{t}$ and $\bar{p}_j = \bar{V} e_{j} + \bar{t}$; these are approximations of the vectors $p_i$. The same error estimate as in the proof of Proposition \ref{prop:lineEquiv} holds, hence we have that for each $i$,
    \begin{equation*}
        \vert p_i - \bar{p_i}\vert \leq 2^{-(r-2)}, 
    \end{equation*}
    and thus, 
    \begin{equation*}
        K^A_{r-2-\log n}(p_0, ..., p_{n-1})\leq K_r^A(P) + K^A(r) + K^A(n) + K^A(i) + c_M.
    \end{equation*}
Combining this inequality with \eqref{eq:hyperplaneEquiv1} and then applying Proposition \ref{prop:caseLutz} (and basic bounds on the size of the smaller terms) completes the proof.  
\end{proof}

\section{The packing dimension of collections of $k$-planes}

In this section, we prove Theorem \ref{thm:planeUnion} and Theorem \ref{thm:purePackingBound}. As indicated in the introduction, the bound in Theorem \ref{thm:purePackingBound} is weaker because we cannot in general expect the scales at which the set of $k$-planes is large to coincide with the scales at which the subsets of the $k$-planes are large. Consequently, however, its proof is similar to but slightly simpler than the proof of Theorem \ref{thm:planeUnion}. Hence, we begin this section with Theorem \ref{thm:purePackingBound}. 

\begin{T2}
    Suppose $\mathcal{P}\subseteq\mathcal{A}(n, k)$ has packing dimension $t>0$. Let $F=\bigcup_{P\in\mathcal{P}}P^\prime$, where each $P^\prime\subseteq P$ has packing dimension at least $s>0$. Then,
    \begin{equation*}
        \dim_P(F)\geq \max\{s, \frac{t- (k-\lceil s\rceil)(n-k)}{\lceil s\rceil +1}\}.
    \end{equation*}
\end{T2}

Before beginning the proof, we briefly outline it. The optimal situation for us is when $s>k-1$. This means that every $P\in \mathcal{P}$ is spanned by a set of points in $F$. Hence, (relative to an appropriate oracle) we can bound the complexity of these points in terms of $\dim_P(F)$. Since these points determine $P$, Lemma \ref{lem:pointsDeterminePlanes} indicates that $P$'s complexity is no higher than the total complexity of the points. We get the bound
\begin{equation*}
\dim_P(F)\geq \frac{t}{k + 1}
\end{equation*}
by picking $P$ so that it has high complexity relative to a packing oracle for $\mathcal{P}$.

When we are not in this optimal situation, we are potentially forced to use points outside of $F$ to determine $P$, which gives us a worse bound. However, we can do two things in our selection of these extra points that mitigate the damage. First, we can pick them to be in the orthogonal complement of the space spanned by the points in $P^\prime$. Applying Lemma \ref{lem:pointsOnAPlane}, each additional point cannot add \emph{too much} complexity relative to what we already have to compute. Second, even with the above requirement, we still have some freedom in the selection of these extra points, enough that we can ensure each one contains appreciable information \emph{unrelated} to $P$. Since our eventual list of points determines more than just $P$, we get a better lower bound on the complexity of these points than we otherwise would. We now proceed with the formal proof. 

\begin{proof}
    $F$ contains a packing dimension $s$ subset, so $\dim_P(F)\geq s$. To bound $F$ it in terms of $t$, let $A$ be a packing oracle for $F$ and $\mathcal{P}$ and let $\ve>0$ be given. Let $P\in \mathcal{P}$ have effective packing dimension relative to $A$ of at least $t-\ve$. We begin by finding a collection of vectors $p_0, ..., p_k$ that determine $P$ in the sense of Lemma \ref{lem:pointsDeterminePlanes}. We want as many of these vectors as possible to be from $P^\prime$. 
    
    Since $P^\prime$ has packing dimension at least $s$, we can pick $p_0, ..., p_{\lceil s \rceil+1}\in P$ such that $(p_1 -p_0), ..., (p_{\lceil s \rceil} - p_0)$ are linearly independent; let $P_s$ denote the span of these vectors. We will choose $p_{\lceil s \rceil+1}, ..., p_k \in (P-p_0)\cap (P_s)^\perp$, also linearly independent. Additionally, since $(P-p_0)\cap (P_s)^\perp$ is a linear subspace with dimension $k - \lceil s \rceil$, there is a standard coordinate $(k - \lceil s \rceil)$-plane $e$ such that $\pi_e:(P-p_0)\cap (P_s)^\perp\to e$ is bi-Lipschitz. In particular, the image of the projection has \emph{Hausdorff} dimension $k - \lceil s \rceil$. By the point-to-set principle, then, we can require that for each $i$ and for \emph{all} sufficiently large $r$, 
    \begin{equation}\label{eq:purePackingBound1}
    K_r^{A, p_1, ..., p_{i-1}}(\pi_e p_i)\geq (k - \lceil s \rceil - \ve)r.
    \end{equation}

    We will make use of this fact later. For now, note that the vectors $p_0, ...p_{\lceil s \rceil}, p_{\lceil s \rceil+1} + p_0, ..., p_k+p_0$ are as in the statement of Lemma \ref{lem:pointsDeterminePlanes}, and clearly 
    \begin{equation*}
K_r^A(p_0, ...p_{\lceil s \rceil+1}, p_{\lceil s \rceil+1} + p_0, ..., p_k)\leq K_r^A(p_0, ..., p_k)+O(\log r),
    \end{equation*}
    so
    \begin{align}
       K_r^A(P, p_{\lceil s \rceil +1}, ..., p_k)&\leq K_r^A(P, p_0, ..., p_k)+O(\log r)\nonumber\\
       &\leq K_r^A(p_0, ..., p_k) + O(\log r).\label{eq:purePackingPtop}
    \end{align}
Throughout this proof (and the next) we will not explicitly note the dependence of the logarithmic errors on $n, k$ or the properties of the collection of points, since we can assume $r$ is sufficiently large depending on these fixed terms. Now, we apply the symmetry of information, which will let us handle the points from $P^\prime$ differently than the remaining ones we choose.
    \begin{align}
            K_r^A(p_0, ..., p_k) &= K^A_r(p_0,  ..., p_{\lceil s \rceil}) + K^A_r(p_{\lceil s\rceil +1},  ..., p_k\mid p_0,  ..., p_{\lceil s \rceil})  + O(\log r)\nonumber\\
            &\leq\sum_{j=0}^{\lceil s\rceil} K_r^A(p_j) +   \sum_{j=\lceil s\rceil +1}^{k} K_r^A(p_j\mid p_0,  ..., p_{\lceil s \rceil})) + O(\log r).   \label{eq:purePackingSums}
    \end{align}
    Since $A$ is a packing oracle for $F$ and $p_0, ..., p_{\lceil s\rceil}\in P^\prime \subset F$, we can bound the $\lceil s\rceil + 1$ terms in the first sum by $(\dim_P(F) + \ve) r$ (provided that $r$ is sufficiently large). Fix $j\in\{\lceil s\rceil +1, ..., k\}$.  For the remaining terms, we can obtain a better bound than the trivial $\approx nr$ thanks to some of the geometric lemmas. More precisely, by Lemma \ref{lem:pointsDeterminePlanes}, a roughly precision $r$ approximation of $P_s$ is computable from $p_0, ..., p_{\lceil s\rceil}$. Hence,
    \begin{equation*}
        K_r^A(p_j\mid p_0,  ..., p_{\lceil s \rceil}) \leq K_r^A(p_j\mid P_s) + O(\log r).
    \end{equation*}
    Corollary \ref{cor:planeDeterminesComplement} indicates that a roughly precision $r$ approximation of $(P_s)^\perp$ is computable from $P_s$, so
    \begin{equation*}
        K_r^A(p_j\mid p_0,  ..., p_{\lceil s \rceil}) \leq K_r^A(p_j\mid (P_s)^\perp) + O(\log r).
    \end{equation*}
    Now, we can use the fact that $p_j$ was chosen to be in $(P_s)^\perp$, a subspace of dimension $n - \lceil s \rceil$; applying Lemma \ref{lem:pointsOnAPlane} immediately gives that
      \begin{equation}\label{eq:purePackingSecondUB}
        K_r^A(p_j\mid p_0,  ..., p_{\lceil s \rceil}) \leq K_r^A(p_j\mid (P_s)^\perp) + O(\log r)\leq (n - \lceil s\rceil ) r + O(\log r). 
    \end{equation}
    Combining \eqref{eq:purePackingSums} with \eqref{eq:purePackingSecondUB} and the prior observation that $A$ is a packing oracle for $F$ implies
    \begin{equation*}
        K_r^A(p_0, ..., p_k)\leq (\lceil s \rceil + 1) (\dim_P(F) + \ve) + (k - \lceil s \rceil)(n - \lceil s\rceil) r + O(\log r).
    \end{equation*}
    Together with \eqref{eq:purePackingPtop}, if $r$ is sufficiently large, we have 
    \begin{equation}\label{eq:purePackingMainUB}
        K_r^A(P, p_{\lceil s \rceil +1}, ..., p_k) \leq (\lceil s \rceil + 1) \dim_P(F)r  + (k - \lceil s \rceil)(n - \lceil s\rceil) r + n\ve r.
    \end{equation}
    Now, we turn towards lower bounding $K_r^A(P, p_{\lceil s \rceil +1}, ..., p_k)$. Clearly, $K_r^A(P)$ is a lower bound, but we can get an improved bound if we use the fact that $p_{\lceil s \rceil + 1}, ..., p_k$ were chosen to have (projections with) high complexity relative to $A, P$, and all the previous points. Since $e$ was one of the standard coordinate $k$-planes, the projection map $\pi_e$ is computable and thus
    \begin{equation*}
        K_r^A(P, \pi_e p_{\lceil s \rceil +1}, ..., \pi_e p_k)\leq  K_r^A(P, p_{\lceil s \rceil +1}, ..., p_k).
    \end{equation*}
    Our goal is to take apart the left hand side with repeated applications of the symmetry of information. In the following chain of inequalities, we also freely use fact that oracle access is no less helpful than precision $r$ access and the fact that $\pi_e$ is computable,
    \begin{align*}
        K_r^A(P, p_{\lceil s \rceil +1}, ..., p_k)&\geq K^A_r(P) + K^A_r(\pi_ep_{\lceil s\rceil + 1}, ..., \pi_{e}p_k\mid P) + O(\log r)\\
        &\geq K^A_r(P) + K^{A, P}_r(\pi_ep_{\lceil s\rceil + 1}, ..., \pi_{e}p_k) + O(\log r)\\
        &= K^A_r(P) + K^{A, P}_r(\pi_ep_{\lceil s\rceil + 1}) + K^{A, P}_r(\pi_ep_{\lceil s\rceil + 2} ..., \pi_{e}p_k\mid \pi_ep_{\lceil s\rceil + 1}) + O(\log r)\\
        &\geq K^A_r(P) + K^{A, P}_r(\pi_ep_{\lceil s\rceil + 1}) + K^{A, P, \pi_ep_{\lceil s\rceil + 1}}_r(\pi_ep_{\lceil s\rceil + 2} ..., \pi_{e}p_k) + O(\log r)\\
        &\geq K^A_r(P) + K^{A, P}_r(\pi_ep_{\lceil s\rceil + 1}) + K^{A, P, p_{\lceil s\rceil + 1}}_r(\pi_ep_{\lceil s\rceil + 2} ..., \pi_{e}p_k) + O(\log r)\\
        &\qquad ... \qquad ... \qquad ... \qquad ... \qquad ... \qquad ...\\
        &\geq K^A_r(P) + K^{A, P}(\pi_ep_{\lceil s\rceil + 1}) + ... + K^{A, P, p_{\lceil s\rceil + 1}, ..., p_{k - 1}}(\pi_{e}p_k) + O(\log r).
    \end{align*}
    Only the first line requires Proposition \ref{prop:symmetry}; the rest of the applications of the symmetry of information are to points in Euclidean space. We are now able to apply \eqref{eq:purePackingBound1} to each projection, giving
    \begin{equation*}
 K_r^A(P, p_{\lceil s \rceil +1}, ..., p_k)\geq K^A_r(P) + (k - \lceil s \rceil) (k - \lceil s\rceil) r - n \ve r.
 \end{equation*}
 Combining this bound with \eqref{eq:purePackingMainUB} we have that
 \begin{equation*}
     K^A_r(P) + (k - \lceil s \rceil) (k - \lceil s\rceil) r \leq (\lceil s \rceil + 1) \dim_P(F) r + (k - \lceil s \rceil)(n - \lceil s\rceil) r + 2 n\ve r.
 \end{equation*}
 Dividing by $r$, taking the limit superior, and recalling that $P$ had effective packing dimension close to $t$ (relative to $A$), we obtain
  \begin{equation*} 
     t + (k - \lceil s \rceil) (k - \lceil s\rceil) \leq (\lceil s \rceil + 1) \dim_P(F) + (k - \lceil s \rceil)(n - \lceil s\rceil) + 3 n\ve. 
 \end{equation*}
 Rearranging and recalling that $\ve>0$ was arbitrary completes the proof. 
    
\end{proof}
Now we turn our attention to Theorem \ref{thm:planeUnion}, which we restate here for convenience. 

\begin{T1}
    Suppose $\mathcal{P}\subseteq\mathcal{A}(n, k)$ has packing dimension $t>0$. Let $F=\bigcup_{P\in\mathcal{P}}P^\prime$, where each $P^\prime\subseteq P$ has Hausdorff dimension at least $s>0$. Then,
    \begin{equation*}
        \dim_P(F)\geq s + \frac{t - (k - \lceil s\rceil)(n-k)}{\lceil s\rceil +1}.
    \end{equation*}
    In particular, the union of a packing dimension $t$ set of $k$-planes has packing dimension at least $\frac{t}{k+1} + k$.
\end{T1}

In the proof of Theorem \ref{thm:purePackingBound}, we placed no requirements on the $\lceil s \rceil +1$ points we picked from $P^\prime$ other than that they were linearly independent. By contrast, the remaining $k-\lceil s \rceil$ points were chosen more carefully, which was fruitful because the Hausdorff dimension of the set they came from was large, meaning we could ensure their complexity was high relative to whatever we wanted at every sufficiently large precision. Now, the Hausdorff dimension of $P^\prime$ is large, so we can choose the points from $P^\prime$ so that they \emph{also} contain information unrelated to $P$ at every sufficiently large precision. This is essentially the only difference in the argument \emph{mutatis mutandis}, but we still provide a proof of Theorem \ref{thm:planeUnion}.

\begin{proof}
    Let $A$ be a packing oracle for $F$ and $\mathcal{P}$. Let $\ve>0$ be given. By the point-to-set principle, we can pick $P\in\mathcal{P}$ with effective packing dimension relative to $A$ of at least $t-\ve$. Let $e$ denote one of the standard coordinate $k$-planes such that the projection $\pi_eP$ has dimension $k$. Then $\pi_e:P\to e$ is bi-Lipschitz, hence $\pi_eP^\prime$ has Hausdorff dimension $s$. Pick $p_0\in P^\prime$ such that \begin{equation*}
        \dim^{A, P}(\pi_e p_0)\geq s - \ve.
    \end{equation*}  
    Then, for each $i\in\{1, ..., \lceil s \rceil\}$, choose $p_i\in P^\prime$ such that
    \begin{itemize} 
        \item $\dim^{A, P, p_0, ..., p_{i-1}}(\pi_ep_i)\geq s-\frac{\ve}{2}$,
        \item $p_i - p_0$ is not in the span of $p_1 - p_0, ..., p_{i-1} - p_0$.
    \end{itemize}
    By observing that the set spanned by the (differences of the) vectors up to step $i$ has dimension $i\leq \lceil s\rceil -1<s$ and applying the point-to-set principle, it is clear that we can make such a choice. The second property guarantees that $(p_1 - p_{0}), ...,  (p_{\lceil s \rceil}-p_{0})$ spans some $\lceil s \rceil$-dimensional subspace of $P-p_{0}\in \mathcal{G}(n, k)$; let $P_s$ denote this subspace. In order to apply Lemma \ref{lem:pointsDeterminePlanes}, again we need to determine $k-\lceil s \rceil$ additional vectors that together with the previous vectors span $P-p_0$.

    We will choose them in $\hat{P}_s:=(P - p_0)\cap (P_s)^\perp)$ Let $e^\prime$ be one of the standard coordinate $(k-\lceil s\rceil)$-planes such that $\pi_{e^\prime}(\hat{P}_s)$ has full dimension. For each $i\in\{\lceil s\rceil+1, ..., k\}$, choose $p_i\in P^\prime$ such that
    \begin{itemize} 
        \item $\dim^{A, P, p_0, ..., p_{i-1}}(\pi_{e^\prime} p_i)=k - \lceil s \rceil$,
        \item $p_i$ is not in the span of $p_{\lceil s\rceil+1}, ..., p_{i-1}$.
    \end{itemize}
    It is clear that our collection of vectors $p_0, ..., p_k$ is sufficient to determine $P$, hence we may apply Lemma \ref{lem:pointsDeterminePlanes}. In combination with the symmetry of information, this gives
    
    \begin{align*}
        K_r^A(P, p_0, ..., p_k)&\leq K^A_r(p_0,  ..., p_k) + C_{n, k, \sigma} + O(\log r)\\
        &= K^A_r(p_0,  ..., p_{\lceil s \rceil}) + K^A_r(p_{\lceil s\rceil +1},  ..., p_k\mid p_0,  ..., p_{\lceil s \rceil})  + O(\log r).
    \end{align*}
    Upper bounding $K^A_r(p_0,  ..., p_{\lceil s \rceil})$ is again straightforward; computing these vectors together is no more difficult than computing them separately, and $A$ is a packing oracle for $F$ (which contains $p_0, ...,  p_{\lceil s\rceil}$), so for any $\ve>0$, if $r$ is sufficiently large, 
    \begin{equation}\label{eq:firstHalfUpperBound}
        K^A_r(p_0,  ..., p_{\lceil s \rceil})\leq (\lceil s\rceil + 1) \dim_P(F) r+ \frac{\ve}{2} r.
    \end{equation}
    Upper bounding the second term is similar, in that we clearly have
    \begin{equation*}
        K^A_r(p_{\lceil s\rceil +1},  ..., p_k\mid p_0,  ..., p_{\lceil s \rceil}) \leq \sum_{j=\lceil s\rceil + 1}^k K_r^A(p_j\mid p_0,  ..., p_{\lceil s \rceil}) + O(\log r).
    \end{equation*}
    Bounding each term in the sum again follows from an application of Lemma \ref{lem:pointsDeterminePlanes}, Corollary $\ref{cor:planeDeterminesComplement}$, and Lemma \ref{lem:pointsOnAPlane}, which together imply
\begin{equation}\label{eq:secondHalfUpperBound}
        K^A_r(p_j\mid p_0,  ..., p_{\lceil s \rceil})\leq (n-\lceil s \rceil) r + O(\log r).
    \end{equation}
    Combining \eqref{eq:firstHalfUpperBound} and \eqref{eq:secondHalfUpperBound}, we obtain the main upper bound: 
    \begin{equation}\label{eq:unionPlanesLemmaApp}
    K_r^A(P, p_0, ..., p_k)\leq  (\lceil s\rceil + 1)\dim_P(F) r + (k - \lceil s \rceil)(n - \lceil s\rceil) r + \ve r.
    \end{equation}
    
    Now, we transition to lower bounding the same quantity. Since $e$ and $e^\prime$ are standard coordinate planes, the orthogonal projection maps onto them are computable and thus we have
    \begin{equation}\label{eq:projectionBasicBound}
        K_r^A(P, \pi_ep_0, ..., \pi_{e^\prime}p_k) \leq K_r^A(P, p_0, ..., p_k) + O(\log r).
    \end{equation}
    Repeatedly applying the symmetry of information, the fact that oracle access is no less helpful than precision $r$ access, and the fact that $\pi_e$ and $\pi_{e^\prime}$ are computable, we have that
    \begin{align*}
        K_r^A(P, \pi_ep_0, ..., \pi_{e^\prime}p_k)&=K^A_r(P) + K^A_r(\pi_ep_0, ..., \pi_{e^\prime}p_k\mid P) + O(\log r)\\
        &\geq K^A_r(P) + K^{A, P}_r(\pi_ep_0, ..., \pi_{e^\prime}p_k) + O(\log r)\\
        &= K^A_r(P) + K^{A, P}_r(\pi_ep_0) + K^{A, P}_r(\pi_ep_1 ..., \pi_{e^\prime}p_k\mid \pi_ep_0) + O(\log r)\\
        &\geq K^A_r(P) + K^{A, P}_r(\pi_ep_0) + K^{A, P, \pi_ep_0}_r(\pi_ep_1 ..., \pi_{e^\prime}p_k) + O(\log r)\\
        &\geq K^A_r(P) + K^{A, P}_r(\pi_ep_0) + K^{A, P, p_0}_r(\pi_ep_1 ..., \pi_{e^\prime}p_k) + O(\log r)\\
        &\qquad ... \qquad ... \qquad ... \qquad ... \qquad ... \qquad ...\\
        &\geq K^A_r(P) + K^{A, P}(\pi_ep_0) + ... + K^{A, P, p_0, ..., p_{k - 1}}(\pi_{e^\prime}p_k) + O(\log r).
    \end{align*}
    Using the construction of the $p_i$'s and the definition of effective Hausdorff dimension along with \eqref{eq:projectionBasicBound}, we have that for sufficiently large $r$, 
    \begin{equation}\label{eq:unionsOfPlanes1}
        K_r^A(P, p_0, ..., p_k)\geq K^A_r(P) + (\lceil s \rceil +1) (s - \ve) r + (k - \lceil s \rceil) (k - \lceil s\rceil-\ve) r.
    \end{equation}
    Combining \eqref{eq:unionPlanesLemmaApp} and \eqref{eq:unionsOfPlanes1}, we have
    \begin{equation*}
         K^A_r(P)\leq (\lceil s \rceil + 1)(\dim_P(F) - s) r+(k-\lceil s\rceil)(n-k)r + n\ve r.
    \end{equation*}
    Dividing by $r$ and taking the limit superior gives
   \begin{equation*}
         t-\ve\leq (\lceil s \rceil + 1)(\dim_P(F) - s)+(k-\lceil s\rceil)(n-k)+n\ve. 
    \end{equation*}
    Since $\ve>0$ was arbitrary, rearranging gives the desired conclusion.
\end{proof}

\begin{remark}
    We deliberately stated our theorems in a way that depended only on the dimension of the set of planes and the dimension of each subset. However, in both instances, we can obtain a better bound if each $P^\prime\subseteq P$ -- in addition to having Hausdorff or packing dimension at least $s$ -- obeys some suitable non-concentration condition. 
    \begin{itemize}
        \item In the proof of Theorem \ref{thm:purePackingBound}, we only needed our vectors $p_0, ..., p_{\lceil s\rceil}$ to be such that $p_1 - p_0, ..., p_{\lceil s\rceil}-p_0$ were linearly independent. Let $\alpha$ denote the minimum over $P\in\mathcal{P}$ of the dimension of the span (in the above sense) of $P^\prime$. Note that $\alpha\geq s$. The larger $\alpha$ is, the less subsets of $k$-planes can concentrate into lower-dimensional subspaces. Indeed, slightly modifying the proof actually gives
        \begin{equation*}
            \dim_P(F)\geq \max\{s, \frac{t- (k-\alpha)(n-k)}{\alpha +1}\}.
        \end{equation*}
        \item Because we asked for more from our choices of $p_i$ in the proof of Theorem \ref{thm:planeUnion}, the non-concentration condition is not quite as simple in this situation; $\alpha$ as defined above is not relevant. We need to be sure that each extra vector we add leaves a Hausdorff dimension $s$ set outside of the new span, so that we can choose the remaining vectors in $P^\prime$ to have high complexity relative to the appropriate oracles. More precisely, let
        \begin{equation*}
            \beta:=\min_{P\in \mathcal{P}}\max\{m\in\mathbb{N}:\forall p_0,...,  p_m\in P^\prime, \dim_H(P^\prime\setminus S(p_0, ... p_m))\geq s \},
        \end{equation*}
        where $S(p_0, ..., p_m) := p_0 + \text{span}(p_1-p+0, ..., p_m - p_0)$. Note that $\beta\geq s$. The larger $\beta$ is, the less concentrated the subsets of $k$-planes are into lower-dimensional subspaces. Then, following the proof with the necessary modifications yields
        \begin{equation*}
           \dim_P(F)\geq s+ \frac{t- (k-\beta)(n-k)}{\beta +1}.
        \end{equation*}
    \end{itemize}
\end{remark}

\section{Extensions of $k$-planes}

In this section, our main goal is to prove Theorem \ref{thm:basicExtension}, which we restate for convenience

\begin{T3}
Let $E\subseteq\mathbb{R}^n$ and let $F$ denote the union of $E$ with every $k$-plane that intersects $E$ in a set of positive measure. Then either $E=F$, or   

\begin{equation*}
    \dim_P(F)\leq 2 \dim_P(E) - k.
\end{equation*}

\end{T3}

\begin{proof}
We may assume $E$ contains at least on positive measure subset of a $k$-plane, or else clearly $E=F$. First, we reduce to the case that $E$ is the union of positive measure subsets of $k$-planes. Let $E^\prime$ be the maximal subset of $E$ of this form and let $F^\prime$ denote the extension (in the sense of the theorem) of $E^\prime$. Then $F = E \cup F^\prime$, so $\dim_P(F) = \max \{\dim_P(E), \dim_P(F^\prime)\}$. If the first term is larger, we are already done, so we assume the second is larger.

 Assume $E=E^\prime$ and let $\mathcal{I}$ index the $k$-planes intersecting $E$ in a set of positive measure. Applying the point-to-set principle,

\begin{equation*}
    \dim_P (E) = \min_{A \subseteq \mathbb{N}} \sup_{x \in E} \Dim^A(x) = \min_{A \subseteq \mathbb{N}} \sup_{I \in \mathcal{I}} \sup_{x\in E\cap I} \Dim^A(x),
\end{equation*}
and,
\begin{equation*}
    \dim_P (F) = \min_{A \subseteq \mathbb{N}} \sup_{x \in F} \Dim^A(x) = \min_{A \subseteq \mathbb{N}} \sup_{I \in \mathcal{I}} \sup_{x\in I} \Dim^A(x).
\end{equation*}
 So it suffices to show that for every $A\subseteq\mathbb{N}$, $I\in \mathcal{A}(n, k)$, and $S$ a positive measure subset of $I$, 
\begin{equation*}
    \sup_{x \in I} \Dim^A(x) \leq  2 \sup_{x \in S} \Dim^A(x) - k.
\end{equation*}
Using the properties of the limit superior this follows if for every $x\in I$ and $\ve>0$, there exist $y, z\in S$ such that for $r$ sufficiently large, 
\begin{equation}\label{eq:basicExtensions}
K^A_r(x) \leq K^A_r(y) + K^A_r(z) - k r + \ve r.
\end{equation}

 Assume without loss of generality that the projection of $S$ onto the first $k$-coordinates has positive measure. Choose $y$ so that its first $k$ coordinates are random relative to $x$ and $A$; call this vector $\hat{y}$. By Fubini's theorem, we may also assume the line containing $x$ and $y$ intersects $S$ in a set of positive measure. Hence, we may choose $z$ on this line such that one of its first $k$ coordinates is random relative to $x, y$, and $A$ ; call this coordinate $\hat{z}$.

Since $x, y$, and $z$ are collinear, there exists some $t\in \mathbb{R}$ such that $x = y + t(z - y)$. It is clear that a precision $r$ estimate of $y, z$, and $t$ is enough to compute a (nearly) precision $r$ estimate of $x$.\footnote{For instance, once can view this as an application of Lemma \ref{lem:pointsDeterminePlanes}.} Additionally, a Turing machine can read off coordinates of $y$ and $z$, so we have
\begin{align*}
    K^A_r(x, \hat{y}, \hat{z}) &\leq K^A_r(y, z, t) + O(\log r)\\
    &\leq K^A_r(y) + K^A_r(z) + K^A_r(t) + O(\log r)\\
    &\leq K^A_r(y) + K^A_r(z) + r + O(\log r).
\end{align*}
On the other hand, by our choices of $y$ and $z$, 
\begin{align*}
    K^A_r(x, \hat{y}, \hat{z}) &= K^A_r(x) +  K_r^A(\hat{y}, \hat{z})- O(\log r)\\
    &\geq K^A_r(x) +  K_r^{A, x}(\hat{y}) + K_r^{A, x, y}(\hat{z})- O(\log r)\\
    &= K^A_r(x) + kr + r - O(\log r).
\end{align*}
Combining these inequalities gives \eqref{eq:basicExtensions}. 
\end{proof}

    Having completed the proof, we note that the reason we assumed the intersections of $E$ with $k$-planes had positive measure was so we could guarantee that many of the radial slices (centered at $x$) were large. This was necessary so we could choose $z$ to have high complexity. However, in the case $k=1$, there is only one radial slice: the line itself. Thus we have the following improvement.  

\begin{P4}
Let $E\subseteq\mathbb{R}^n$ and let $F$ denote the union of $E$ with every line that intersects $E$ in a set of Hausdorff dimension 1. Then either $E=F$, or   
    \begin{equation*}
    \dim_P(F)\leq 2 \dim_P(E) - 1.
\end{equation*}
\end{P4}
\begin{proof}
    The first portion of the proof is identical to that of Theorem \ref{thm:basicExtension}, so it suffices to show 
    that for every $S\subseteq I\in\mathcal{A}(n, 1)$ of Hausdorff dimension 1, $x\in I$, and $\ve>0$, there exist $y, z\in S$ such that for $r$ sufficiently large, 
\begin{equation*}
K^A_r(x) \leq K^A_r(y) + K^A_r(z) - r + \ve r.
\end{equation*}
Assume without loss of generality that the projection of $S$ onto the first coordinate has Hausdorff dimension 1. By the point-to-set principle, we may choose $y$ and $z$ such that $\dim^{A, x}(\hat{y})\geq 1 - \frac{\ve}{4}$ and $\dim^{A, x, y}(\hat{z})\geq 1 - \frac{\ve}{4}$. Then for $r$ sufficiently large,
\begin{equation*}
    K_r^{A, x}(\hat{y})\geq (1 - \frac{\ve}{3}) r \qquad \text{ and } \qquad K_r^{A, x, y}(\hat{y})\geq (1 - \frac{\ve}{3}) r,
\end{equation*}
which implies 
\begin{align*}
    K^A_r(x, \hat{y}, \hat{z}) &\geq K^A_r(x) -  K_r^{A, x}(\hat{y}, \hat{z})+ O(\log r)\\
    &\geq K^A_r(x) +  K_r^{A, x}(\hat{y}) + K_r^{A, x, y}(\hat{z})- O(\log r)\\
    &\geq K^A_r(x) + 2r -\frac{2 \ve}{3}r - O(\log r).
\end{align*}
As in the case of Theorem \ref{thm:basicExtension}, we have that
\begin{equation*}
    K^A_r(x, \hat{y}, \hat{z}) \leq K^A_r(y) + K^A_r(z) + r + O(\log r).
\end{equation*}
Combining these inequalities completes the proof. 
\end{proof}

\section{Extensions and unions of hyperplanes}

In this section, we improve on some of our previous bounds in the special case that $k=n-1$. Since we are only considering hyperplanes, and to simplify some of the inequalities, we will often use the shorthand $(x, a\cdot x + b)$ for $(x_1, ..., x_{n-1}, a_1x_1+, ..., +a_{n-1} x_{n-1}+b)$. Not every hyperplane has this form, but this is no obstruction, since we can decompose a union of hyperplanes into finitely many pieces and rotate each piece so that all of its hyperplanes are of this form. 

Key to the proofs of both Theorem \ref{thm:optimalExtension} and Theorem \ref{thm:unionHyperplanes} is the following inequality:

\begin{prop}\label{prop:everyPrecisionBound}
    Let $a=(a_1, ..., a_{n-1})\in\mathbb{R}^{n-1}$, $b\in\mathbb{R}$, $x=(x_1, ..., x_{n-1})\in\mathbb{R}^{n-1}$, $\ve>0$ and an oracle $A\subseteq\mathbb{R}^n$ be given. Assume $\dim^{A, a, b}(x)\geq n-1-\frac{\ve}{4}$. Then there exists some constant $C$ such that for every precision $r$,
    \begin{equation*}
        K_r^A(x, a\cdot x+b)\geq (n-1) r  + K^A_{c_r}(a, b)+(r - c_r) -C \sqrt{\ve}r,
    \end{equation*}
    where $c_r$ denotes the largest minimizer of $K^A_t(a, b) - t$ for $t\in[1, r]$. 
\end{prop}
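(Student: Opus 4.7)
My plan is to adapt the every-precision lower bound of Lemma 7 of \cite{lutz2020bounding}---which handles the case of lines in the plane---to hyperplanes in $\mathbb{R}^n$, using the intersection lemma \ref{lem:intersectionGeo} to pass from $n$ dimensions down to $2$. The argument has three components: a symmetry-of-information decomposition, a lower bound coming from the dimension hypothesis, and a $2$-dimensional reduction.

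I first set $p = (x, a\cdot x + b)$ and apply Proposition \ref{prop:symmetry} (symmetry of information) to write
\begin{equation*}
K_r^A(p) = K_r^A(a, b) + K_r^A(p \mid a, b) - K_r^A(a, b \mid p) \pm O(\log r).
\end{equation*}
Since the last coordinate of $p$ is determined by $x$ and $(a, b)$ while the first $n-1$ coordinates of $p$ are just $x$, we have $K_r^A(p \mid a, b) = K_r^{A, a, b}(x) + O(\log r)$. The hypothesis $\dim^{A, a, b}(x) \geq n-1-\varepsilon/4$ then gives $K_r^{A, a, b}(x) \geq (n-1-\varepsilon/4)r$ for all sufficiently large $r$; any finite shortfall at small $r$ is absorbed into the eventual constant $C$.

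The heart of the proof is an upper bound of the form
\begin{equation*}
K_r^A(a, b \mid p) \leq K_r^A(a, b) - K_{c_r}^A(a, b) - (r - c_r) + O(\sqrt{\varepsilon})\, r.
\end{equation*}
To obtain it, I would use Lemma \ref{lem:intersectionGeo} to choose an axis-aligned $2$-plane through $p$ spanned by $e_i$ (for some $1 \leq i \leq n-1$) and $e_n$; on this $2$-plane the ambient hyperplane restricts to a line of slope $a_i$ and intercept $\tilde b_i := a_{-i}\cdot x_{-i} + b$, while the restriction of $p$ is the $2$D point $(x_i, a\cdot x + b)$. Moving $x_{-i}$ into the oracle reduces the problem to the $2$D line setting, where Lemma 7 of \cite{lutz2020bounding} applies at precision $r$ and---after an equivalent rephrasing via symmetry of information---delivers exactly the desired conditional complexity bound. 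The cost of moving $x_{-i}$ into the oracle is controlled using the machinery of Section 3 (in particular Lemma \ref{lem:pointsDeterminePlanes} and Lemma \ref{lem:pointsOnAPlane}), which lets us relate the complexity of $x_{-i}$ at various precisions back to the complexity of $p$.

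Combining the two estimates via the symmetry-of-information identity and absorbing $\varepsilon/4 \leq C'\sqrt{\varepsilon}$ (valid for small $\varepsilon$, with boundary cases pushed into $C$) yields the claimed inequality. The principal obstacle is the $2$D reduction: a naive choice of index $i$ and of the precisions at which $x_{-i}$ is extracted typically produces a $2$D critical precision that differs from the ambient $c_r$ by too much, creating an error of order $r$ rather than $\sqrt{\varepsilon}\,r$. The remedy---the ``careful selection of a suitable sequence of precisions'' alluded to in the introduction---is to choose both $i$ and the precision sequence in accordance with the complexity profile of $(a, b)$ so that the $2$-dimensional and $n$-dimensional minimizers align up to $O(\sqrt{\varepsilon})\, r$.
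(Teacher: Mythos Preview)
Your outline diverges from the paper's proof and contains a genuine gap at the step you yourself flag as the ``principal obstacle.''

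First, some misidentifications. Lemma~7 of \cite{lutz2020bounding} is the planar \emph{intersection} lemma (the $n=2$ case of Lemma~\ref{lem:intersectionGeo}), not an every-precision lower bound; the $n=2$ case of the proposition is inequality~(4.6) in \cite{bushling2025extension}. More importantly, the paper does \emph{not} prove Proposition~\ref{prop:everyPrecisionBound} by reducing to the planar case. The reduction to $\mathbb{R}^2$ happens only inside the proof of the geometric Lemma~\ref{lem:intersectionGeo}. For the proposition itself the paper works directly in $\mathbb{R}^n$: it introduces an auxiliary oracle $D=D(A,n,r,(a,b),\eta)$ from Lemma~\ref{lem:oracleD} with $\eta$ chosen so that, relative to $(A,D)$, the complexity function of $(a,b)$ is flattened to slope roughly~$1$ above $c_r$. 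Lemma~\ref{lem:intersectionGeo} is then used to verify hypothesis~(2) of the enumeration Lemma~\ref{lem:enumeration}, and that lemma (applied relative to $(A,D)$) yields $K_r^{A,D}(x,a\cdot x+b)\ge K_r^{A,D}(x,a,b)-O(\sqrt{\ve}\,r)$; unwinding $D$ gives the stated inequality. No coordinate slicing and no alignment of $2$-dimensional and $n$-dimensional minimizers is required.

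Second, your slicing argument does not close. After you put $x_{-i}$ into the oracle, the relevant $2$-dimensional line is $(a_i,\tilde b_i)$ with $\tilde b_i=a_{-i}\cdot x_{-i}+b$, and its complexity profile relative to $(A,x_{-i})$ need not resemble that of $(a,b)$ relative to $A$: the coordinate $a_i$ may carry almost none of the complexity of $(a,b)$, and conditioning on $x_{-i}$ can further distort the profile of $\tilde b_i$. There is no reason the $2$D minimizer should lie within $O(\sqrt{\ve})\,r$ of the ambient $c_r$, and varying $i$ over $n-1$ choices cannot repair an error that is of order $r$. The phrase ``careful selection of a suitable sequence of precisions'' in the introduction refers to the deduction of Theorem~\ref{thm:unionHyperplanes} \emph{from} Proposition~\ref{prop:everyPrecisionBound}, not to the proof of the proposition; invoking it here is circular. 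The missing idea is precisely the oracle~$D$: rather than trying to match $c_r$ across a dimension reduction, the paper forces the complexity profile of $(a,b)$ to be compatible with the enumeration argument at the single precision~$r$.
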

\noindent Note that, when $n=2$, this is just inequality 4.6 in \cite{bushling2025extension}. 

\subsection{Proofs of the hyperplane theorems assuming Proposition \ref{prop:everyPrecisionBound}:} We will briefly delay the proof of Proposition \ref{prop:everyPrecisionBound} in order to first see how Theorem \ref{thm:optimalExtension} and Theorem \ref{thm:unionHyperplanes} follow from it. 

\begin{T5}
    Let $E\subseteq\mathbb{R}^n$ be given, and let $F$ denote the union of $E$ with every $(n-1)$-plane that intersects $E$ in a set of Hausdorff dimension $n-1$. Then, 
    \begin{equation*}
        \dim_P(F)=\dim_P(E).
    \end{equation*}
\end{T5}

As indicated in the introduction, this proof is largely identical to that of the main planar result in \cite{bushling2025extension}; the novelty in this instance comes more from one of the key tools used in the proof of Proposition \ref{prop:everyPrecisionBound}. For the purpose of completeness, however, we include the proof.

\begin{proof}
The effectivization of Theorem \ref{thm:optimalExtension} is the same as that of Theorem \ref{thm:basicExtension}; in particular, it suffices to show that for every hyperplane $(a, b)$, oracle $A$, and $x_\ve\in\mathbb{R}^{n-1}$ such that $\dim^A(x_\ve)\geq n-1-\ve$,
\begin{equation*}
    \Dim^A(x_\ve, a\cdot x_\ve + b) \geq \sup_{x\in\mathbb{R}^{n-1}} \Dim^A(x, a\cdot x + b) - C \sqrt{\ve},
\end{equation*}
for some constant $C$. This inequality in turn follows if for every $x_\ve$ as above, every $x\in\mathbb{R}^{n-1}$, and every $r$ sufficiently large,
\begin{equation}\label{eq:optimalMain}
    K^A_r(x_\ve, a\cdot x_\ve + b) \geq K^A_r(x, a\cdot x + b) - C\sqrt{\ve} r,
\end{equation}
for some constant $C$. We have at our disposal two facts that let us upper bound $K^A_r(x, a\cdot x + b)$ for arbitrary $x$. Assume $t\leq r$. First, we know
\begin{equation*}
    K^A_t(x, a, b) \geq K_t^A(x, ax + b) - O(\log r).
\end{equation*}
Secondly, 
\begin{equation*}
    n(r-t) \geq K^A_{r, t}(x, ax + b) - O(\log r).
\end{equation*}
Applying these with $t=c_r$ yields
\begin{equation*}
    n(r-c_r) + K^A_{c_r}(x, a, b) \geq  K^A_{r, t}(x, ax + b) - O(\log r).
\end{equation*}
Since $x$ is an $n-1$-dimensional object, we have that
\begin{equation*}
    n(r-c_r) + (n-1) c_r + K^A_{c_r}(a, b) \geq  K^A_{r, t}(x, ax + b) - O(\log r).
\end{equation*}
Comparing this inequality with the bound on $K_r^A(x_\ve, a \cdot x_\ve + b)$ from Proposition \ref{prop:everyPrecisionBound} establishes \eqref{eq:optimalMain} and completes the proof. 
\end{proof}

Now, we move on to our final theorem's proof. 

\begin{T6}
Suppose $\mathcal{P}\subseteq\mathcal{A}(n, n-1)$ has packing dimension $t>0$. Let $F=\bigcup_{P\in\mathcal{P}}P^\prime$, where each $P^\prime\subseteq P$ has Hausdorff dimension $n-1$. Then,
    \begin{equation*}
        \dim_P(F)\geq n-1 + \frac{nt}{(n-1)t +n}.
    \end{equation*}
\end{T6}

As compared to the proof of Theorem \ref{thm:optimalExtension} assuming Proposition \ref{prop:everyPrecisionBound}, which largely mirrored the proof of its two-dimensional analog in \cite{bushling2025extension}, the proof of Theorem \ref{thm:unionHyperplanes} requires some newer ideas. As mentioned in the introduction, we have to be rather careful with the exact precisions at which we apply Proposition \ref{prop:everyPrecisionBound}. Furthermore, we will need to have some control over what $c_r$ actually is in these applications, which will require that we introduce an extra oracle to lower the complexity of $(a, b)$ at certain precisions. Lemma 8 of \cite{lutz2020bounding} (which we state in the form used in \cite{stull2022pinned}) is ideal for this purpose. 

\begin{lem}\label{lem:oracleD}

    Let $A \subseteq \{0,1\}^*$, $r \in \N$, $z \in \R^n$, and $\eta \in \Q^+$. There is an oracle $D = D(A, n, r, z, \eta)$ satisfying the following:
    \begin{enumerate}[label={\normalfont \textbf{(\arabic*)}}, itemsep=1.5pt, topsep=-3pt]
        \item For every natural number $t \leq r$, 
        \begin{equation*}
            K_t^{A,D}(z) = \min \, \{ \eta r, K_t^A(z) \} + O(\log r).
        \end{equation*}
        \item For every $m,t \in \N$ and $y \in \R^m$, 
        \begin{equation*}
            K_{t,r}^{A,D}(y \mid z) = K_{t,r}^A(y \mid z) + O(\log r) \quad \text{and} \quad K_t^{A,D,z}(y) = K_t^{A,z}(y) + O(\log r).
        \end{equation*}
        \item If $B \subseteq \{0,1\}^*$ satisfies $K_r^{A,B}(z) \geq K_r^A(z) - O(\log r)$, then
        \begin{equation*}
            K_r^{A,B,D}(z) \geq K_r^{A,D}(z) - O(\log r).
        \end{equation*}
        \item For every $m,t \in \N$, $u \in \R^n$, and $w \in \R^m$,
        \begin{equation*}
            K_{r,t}^A(u \mid w)\leq K_r^{A,D}(u \mid w) + K_{r,t}^A(z) - \eta r + O(\log r).
        \end{equation*}
    \end{enumerate}
\end{lem}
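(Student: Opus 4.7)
The plan is to construct $D$ explicitly as a carefully chosen initial segment of an optimal $A$-program for $z$ at precision $r$, and then verify the four properties in turn. Let $\sigma^*$ be a shortest $A$-program whose output is a rational $p \in B_{2^{-r}}(z) \cap \Q^n$, so that $|\sigma^*| = K_r^A(z)$. If $K_r^A(z) \le \eta r$ we simply set $D = \emptyset$ and all four conclusions hold (essentially) trivially. Otherwise, define $D$ to encode the prefix of $\sigma^*$ of length $K_r^A(z) - \lfloor \eta r \rfloor$, together with the natural numbers $r$ and $K_r^A(z)$ (only an $O(\log r)$ overhead). The intuition is that $D$ "pre-pays" for all but the last $\lfloor \eta r \rfloor$ bits of the optimal program for $z$, so that $z$ becomes cheap relative to $(A, D)$ but no information about unrelated objects is inadvertently leaked.

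For the upper bound in (1): given $(A,D)$ and any candidate suffix $\tau$ of length $\lfloor \eta r \rfloor$, concatenating $D \cdot \tau$ yields a candidate program whose output, if it halts within the length budget encoded in $D$, is a precision-$r$ rational near $z$; the correct $\tau$ exists, giving $K_r^{A,D}(z) \le \eta r + O(\log r)$. For $t \le r$, either $K_t^A(z) \le \eta r$ (and then we ignore $D$) or we relay the precision-$r$ approximation down to precision $t$ with only an $O(\log r)$ loss via a standard truncation argument. The lower bound in (1) is the first genuinely delicate step: one must show that $D$ cannot compress $z$ further than advertised. This follows from a counting argument: $D$ has description length at most $K_r^A(z) - \eta r + O(\log r)$ relative to $A$, so any $(A,D)$-program for $z$ at precision $t$ translates (by concatenation with $D$ and $O(\log r)$ scaffolding) into an $A$-program for $z$ at precision $t$, forcing $K_t^A(z) \le K_t^{A,D}(z) + K_r^A(z) - \eta r + O(\log r)$; combined with the trivial $K_t^{A,D}(z) \le K_t^A(z) + O(1)$, this gives the claimed minimum.

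For (2), the crucial observation is that $D$ is a computable function of $(A, z \uhr r, K_r^A(z))$. Hence given oracle access to $z$ (or to a precision-$r$ approximation, which $y$-computations at precisions $t$ with appropriate parameters already have) together with $A$, one can reconstruct $D$ modulo $O(\log r)$ bits describing the length parameter. Thus adding $D$ to the oracle cannot appreciably lower the complexity of any $y$ that is already being computed with access to $z$, giving both equalities in (2). Property (3) follows from the same principle in reverse: if $B$ satisfies $K_r^{A,B}(z) \ge K_r^A(z) - O(\log r)$, then the prefix of an optimal $(A,B)$-program for $z$ has essentially the same length as that of the optimal $A$-program, so adding $B$ alongside $D$ cannot substantially reduce $K_r^{A,D}(z)$; formally, one chains the lower-bound counting argument from (1) with the hypothesis on $B$.

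Property (4) is the structural payoff of the whole construction. Given $A$ and a precision-$t$ approximation of $w$, we can assemble a program for $u$ at precision $r$ by (a) running the optimal $(A,D)$-program computing $u$ from $w$, of length $K_r^{A,D}(u \mid w)$, and (b) supplying the missing $\lfloor \eta r \rfloor$ bits of $\sigma^*$ together with the precision-$t$ witness for $(z \mid w)$, whose joint cost is $K_{r,t}^A(z \mid w) + O(\log r) \le K_{r,t}^A(z) + O(\log r)$; the net saving is exactly the $\eta r$ bits encoded in $D$, yielding the bound. The main obstacle throughout is the lower bound in (1): one must be careful that $D$ truly behaves like an "information-hiding" oracle that reveals only $K_r^A(z) - \eta r$ bits about $z$ and nothing else, which is where the prefix-structure of $\sigma^*$ and the explicit length encoding are essential.
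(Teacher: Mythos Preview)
The paper does not prove this lemma: it is quoted verbatim as Lemma~8 of \cite{lutz2020bounding} (in the form stated in \cite{stull2022pinned}), with no argument given. So there is no in-paper proof to compare your proposal against.

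That said, your construction---taking $D$ to be the initial segment of a length-optimal $A$-program for $z$ at precision $r$, of length $K_r^A(z)-\lfloor\eta r\rfloor$, together with $r$ and $K_r^A(z)$---is exactly the construction used in the cited sources, and your verifications of (1)--(4) follow the same outline. One point to tighten: in your argument for (2) you assert that $D$ is computable from $(A, z{\uhr}r, K_r^A(z))$, but as written $D$ depends on a \emph{particular} shortest program $\sigma^*$, and there may be several; you need to fix a canonical choice (e.g., the first to halt under a fixed dovetailing of $A$-programs of the prescribed length, checked against a ball determined by $z$), and this is where having $z$ as an oracle (rather than merely $z{\uhr}r$) matters for deciding membership in $B_{2^{-r}}(z)$ up to the usual $O(\log r)$ slack. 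With that convention made explicit, your sketch is sound.
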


Equipped with this tool, we will find precisions $r_i$ where $K_{r_i}^A(a, b)$ is large and use an oracle $D$ to flatten the complexity function for precisions larger than $r_i$.  Then, we will apply Proposition \ref{prop:everyPrecisionBound} at essentially the least precision $t_i$ at which we can make use of all the complexity $K_{r_i}^A(a, b)$. This strategy ensures the quantity $\frac{K_{t_i}^A(x, a\cdot x + b)}{t_i}$ is large, which will allow us to establish a better bound on the effective packing dimension of $(x, a\cdot x + b)$.

\begin{proofof}{Theorem \ref{thm:unionHyperplanes}}
By the point-to-set principle and Proposition \ref{prop:hyperplaneEquiv}, it suffices to show that for every $(a, b)$, oracle $A$, and $S\subseteq\mathbb{R}^{n-1}$ of Hausdorff dimension $n-1$,
\begin{equation}\label{eq:hyperplaneMainBound}
     \sup_{x\in S} \Dim^A(x, a\cdot x + b) \geq \frac{\Dim^A(a, b) n}{(n-1)\Dim^A(a, b) + n} + (n-1).
\end{equation}
Since $S$ has Hausdorff dimension $(n-1)$, given $\ve>0$, we may assume throughout the proof that $x$ satisfies $\dim^{A, a, b}(x)\geq n-1-\frac{\ve}{4}$. Let $r_i$ be a sequence of precisions such that
\begin{equation}\label{eq:realizesPackingDimension}
\left(\Dim^A(a, b)-\frac{1}{i}\right) r_i \leq K^A_{r_i}(a, b) \leq \left(\Dim^A(a, b)+\frac{1}{i}\right) r_i.
\end{equation}
Define $c_i:=c_{r_i}$. We will consider two cases. First, suppose $c_i \geq (1 - \ve)r_i$. Then, applying Proposition \ref{prop:everyPrecisionBound}, we have
\begin{equation*}
    K^{A}_{r_i}(x, a\cdot x + b)\geq (n-1) r_i + K^{A}_{r_i}(a, b) - O(\sqrt{\ve} r_i)
\end{equation*}
which, with the choice of $r_i$ implies
\begin{equation}\label{eq:hyperplaneCase1}
    K^{A}_{r_i}(x, a\cdot x + b)\geq \left(n - 1 + \Dim^A(a, b) -\frac{1}{i} \right)r_i - O(\sqrt{\ve} r_i).
\end{equation}
We remark that, throughout the proof, the implicit constants do not depend on $i$. 

 Now we consider the case that $c_i<(1 - \ve) r_i$. Morally speaking, the worst scenario is that $K^A_{c_i}(a, b)=0$, but immediately after, the complexity function grows at the maximum possible rate until precision $r_i$. Proposition \ref{prop:everyPrecisionBound} only allows us to extract complexity from $(a, b)$ at a rate of at most $1$, which means applying it at the precision $r_i$ would be very inefficient. Instead, we want to apply this proposition at a larger precision, $t_i$, which will allow us to pick up more of the growth. We will define $t_i$ and several other precisions to reduce to this worst case and establish a bound in this instance.  

 First, let $d_i$ be such that $n(r_i - d_i) = K^A_{r_i}(a, b)$. Since $n>1$, using Proposition \ref{prop:caseLutz}, it is easy to see that 
\begin{equation}\label{eq:alternatePrecision}
c_i - K^A_{c_i}(a, b)\leq d_i + O(\log r_i).
\end{equation}
Second, let $t_i$ be such that $t_i - d_i = K^A_{r_i}(a, b)$. We want to use an oracle $D$ to lower the complexity $K^A_{t_i}(a, b)$ to roughly $K^A_{r_i}(a, b)$. More precisely, let $\eta_i$ be chosen (from some finite set of rationals $Q_\ve$ depending only on $\ve$) such that  
\begin{equation*}
    \dfrac{K^A_{r_i}(a, b)}{t_i} - 2\ve <\eta_i <  \dfrac{K^A_{r_i}(a, b)}{t_i} - \ve.
\end{equation*}
Then define the oracle $D_i:=D(A, n, t_i, (a, b),\eta_i)$  as in Lemma \ref{lem:oracleD}. It is easy to check that our definition of $D_i$ ensures that $c_{t_i}$ (relative to $(A, D_i)$) is approximately $t_i$. To do so, note that if $t^\prime_i$ is such that $K^A_{c_i}(a, b) + t^\prime_i - c_i= K^A_{r_i}(a, b)$, then \eqref{eq:alternatePrecision} guarantees $t_i^\prime\leq t_i + O(\log r)$, so $K^{A, D_i}_s(a, b)\geq K^A_{t_i}(a, b) - t_i + s$ for all $s\in[1, (1 - \ve)r]$. Hence, for sufficiently large $i$, by Proposition \ref{prop:everyPrecisionBound},
\begin{align}
    K^{A}_{t_i}(x, a\cdot x + b) &\geq K^{A, D_i}_{t_i}(x, a\cdot x + b) + O(\log r_i)\nonumber \\
    &\geq (n-1) t_i + K^{A, D_i}_{c_{t_i}}(a, b) + t_i - c_{t_i} - O(\sqrt{\ve} r_i)\nonumber \\
    &\geq (n-1) t_i + K^{A, D_i}_{t_i}(a, b) - O(\sqrt{\ve} r_i)\nonumber \\
    &= (n-1) t_i + K^A_{r_i}(a, b) - O(\sqrt{\ve} r_i) \nonumber \\
    &\geq (n-1) t_i + (\Dim^A(a, b) - \frac{1}{i})r_i - O(\sqrt{\ve} r_i).\label{eq:boundWithTi}
\end{align}
Thus, our remaining goal is to bound $r_i$ in terms of $t_i$. Using \eqref{eq:realizesPackingDimension}, the definitions of $d_i$ and $t_i$, and some basic algebra, we have
\begin{equation*}
    r_i \geq \dfrac{n}{(n-1) \Dim^A(a, b) + n} t_i - O(\frac{r_i}{i}).
\end{equation*}
Combining this with \eqref{eq:boundWithTi} and noting that $r_i$ and $t_i$ are comparable gives
\begin{equation}\label{eq:hyperplaneCase2}
    K^{A}_{t_i}(x, a\cdot x + b)\geq (n-1) t_i +  \dfrac{n \Dim^A(a, b)}{(n-1) \Dim^A(a, b) + n} t_i- O\left(\sqrt{\ve} t_i + \frac{t_i}{i}\right).
\end{equation}

It is easy to see that bound \eqref{eq:hyperplaneCase1} in the first case is always better than the bound in \eqref{eq:hyperplaneCase2} in the second case, so we can conclude that there is an infinite sequence of precisions $s_i$ at which
\begin{equation*}
    K^{A}_{s_i}(x, a\cdot x + b)\geq (n-1) s_i +  \dfrac{n \Dim^A(a, b)}{(n-1) \Dim^A(a, b) + n} s_i- O\left(\sqrt{\ve} s_i + \frac{s_i}{i}\right),
\end{equation*}
which implies
\begin{equation*}
    \Dim^A(x, a\cdot x+  b)\geq n-1 + \dfrac{n \Dim^A(a, b)}{(n-1) \Dim^A(a, b) + n} - O(\sqrt{\ve}).
\end{equation*}
Since we can choose a suitable $x\in S$ for arbitrary $\ve>0$, this establishes  \eqref{eq:hyperplaneMainBound}. \qed

\end{proofof}
\begin{remark}
    Proposition \ref{prop:everyPrecisionBound} is actually enough to establish the sharp Hausdorff dimension version of Theorem \ref{thm:unionHyperplanes} (a special case of the main bound in \cite{heraKeleti2019hausdorff}) without having to define other precisions as we did above. The desired bound in this instance is 
    \begin{equation*}
        \dim_H(F)\geq n-1 + \min\{1, t\}. 
    \end{equation*}
    In Proposition \ref{prop:everyPrecisionBound}, we have the terms 
    \begin{itemize}
    \item $(n-1)r$, 
    \item $K^A_{c_r}(a, b)$, which we can essentially lower bound by $t c_r$, and 
    \item $r - c_r$, which we can obviously lower bound by $\min\{1, t\} (r - c_r)$
    \end{itemize}
    Hence, at every precision 
    \begin{equation*}
    K^A_r(x, a \cdot x + b)\geq (n - 1 )r + \min\{t, 1\} r
    \end{equation*}
    We are unaware of different sharp examples in the packing dimension version of this problem, so we are unsure whether the difference in the Hausdorff and packing bounds we can obtain illustrates a limitation of our strategy or a distinction between the two cases. However, this does seem to be an interesting question. 
\end{remark}

\subsection{Proof of Proposition \ref{prop:everyPrecisionBound}:} Before we prove Proposition \ref{prop:everyPrecisionBound}, we will need two lemmas in addition to the aforementioned Lemma \ref{lem:oracleD}. To start, we will need an enumeration lemma, which gives sufficient conditions under which we can deduce a strong lower bound on the complexity of points lying on a hyperplane.  

\begin{lem}\label{lem:enumeration}
    Suppose $a, x\in\mathbb{R}^{n-1}$, $b \in \R$, $B \subseteq \{0,1\}^*$, $r \in \N$ is sufficiently large depending on $a, x,$ and $b$, $\delta \in \R^+$, and $\ve,\eta \in \Q^+$ satisfy the following:
    \begin{enumerate}
        \item $K_r^B(a,b) \leq (\eta + \ve) r$.
        \item For every $(u,v) \in B((a,b),1)$ such that $u\cdot x+v = a\cdot x+b$,
        \begin{equation*}
            K_r^B(u,v) \geq (\eta - \ve)r + \delta (r-t)
        \end{equation*}
        whenever $t := -\log |(a,b) - (u,v)| \in (1,r]$.
    \end{enumerate}
    Then 
    \begin{equation*}
        K_r^B(x, a\cdot x+b) \geq K_r^B(x,a,b) - \frac{2n\ve}{\delta} r - K^B(\ve) - K^B(\eta) - O(\log r).
    \end{equation*}
    \end{lem}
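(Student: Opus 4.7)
I plan to prove this by an enumeration argument in the style of the arguments of N. Lutz and Stull, adapted to hyperplanes. The goal is to describe an oracle Turing machine $M^B$ which, given as input a shortest program $\pi$ for $(x, a\cdot x + b)$ at precision $r$, the rationals $\ve, \eta$, and a short index $j$, outputs a precision-$r$ approximation of $(x, a, b)$. If $|j| \leq \frac{2n\ve}{\delta} r + O(\log r)$ suffices to pinpoint the correct entry of the enumeration, the definition of Kolmogorov complexity immediately yields $K_r^B(x, a, b) \leq K_r^B(x, a \cdot x + b) + \frac{2n\ve}{\delta} r + K^B(\ve) + K^B(\eta) + O(\log r)$, which rearranges to the desired inequality.

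\textbf{The enumeration.} Running $\pi$ gives $\bar x, \bar w$ with $|\bar x - x| \leq 2^{-r}$ and $|\bar w - (a \cdot x + b)| \leq 2^{-r}$. The machine $M^B$ then enumerates all rational pairs $(u, v) \in \Q^{n-1} \times \Q$ such that: (i) $(u,v)$ lies in the unit ball about a fixed rational reference point supplied as $O(\log r)$ advice; (ii) $(u,v)$ is the output of some $B$-oracle program of length at most $(\eta+\ve) r + O(1)$; and (iii) the approximate hyperplane consistency $|u \cdot \bar x + v - \bar w| \leq C(1 + |x|) 2^{-r}$ holds for a suitable absolute constant $C$. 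By hypothesis (1), some precision-$r$ approximation of $(a,b)$ appears on this list, so $j$ need only specify which entry it is.

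\textbf{Counting via hypothesis (2).} To bound the length of $j$, I bound the list size. For any $(u, v)$ on the list, define the auxiliary $v^* := (a \cdot x + b) - u \cdot x$, so that $u \cdot x + v^* = a \cdot x + b$ holds exactly; combining condition (iii) with the precision of $\bar x, \bar w$ forces $|v - v^*| = O(2^{-r})$, so $K_r^B(u, v^*) \leq (\eta+\ve) r + O(\log r)$. Applying hypothesis (2) to $(u, v^*) \in B((a, b), 1)$ with $t := -\log|(u, v^*) - (a, b)|$, we obtain $(\eta-\ve) r + \delta(r - t) \leq (\eta+\ve)r + O(\log r)$, i.e.\ $t \geq r - 2\ve r/\delta - O(\log r/\delta)$. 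Hence every $(u,v)$ on the list lies in an $n$-dimensional ball of radius $2^{-r + 2\ve r/\delta + O(\log r)}$ centered at $(a,b)$. A standard volume count shows such a ball contains at most $2^{2n\ve r/\delta + O(\log r)}$ rationals enumerable at precision $r$, giving the required bound on $|j|$.

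\textbf{Main obstacle.} The principal care is required in the passage from $(u,v)$ to the exact companion $(u, v^*)$ in the counting step. Although $v^*$ is not literally computable by $M^B$ (it depends on the actual values of $x$ and $a\cdot x + b$), it is used only to \emph{reason} about the enumerated list, not to construct it; the enumeration itself proceeds purely from $\bar x$ and $\bar w$. Once this distinction is handled correctly, the remaining work is routine bookkeeping of the $O(\log r)$ error terms and verifying that the volume-based count on the final ball dominates the trivial complexity-based count (so that bounding $|j|$ by $\log$ of the list size really gives $\frac{2n\ve}{\delta} r$ rather than $(\eta+\ve)r$).
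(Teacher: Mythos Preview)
Your reconstruction matches the Lutz--Stull enumeration argument that the paper cites and omits: enumerate low-complexity $(u,v)$ consistent with $(\bar x,\bar w)$, pass to the exact companion $(u,v^*)$ to invoke hypothesis~(2), and conclude that every list entry lies in the ball of radius $2^{-r+2\ve r/\delta+O(\log r)}$ about $(a,b)$. The one slip is the final count: a ball contains infinitely many rationals, and the list itself may have up to $2^{(\eta+\ve)r}$ entries, so bounding $|j|$ by $\log(\text{list size})$ does not directly give $\frac{2n\ve}{\delta}r$. The fix your phrase ``volume count'' is reaching for is to have $j$ index one of the at most $2^{n(r-t)+O(1)}$ precision-$r$ dyadic cells meeting the ball (equivalently, output the \emph{first} list entry as a precision-$t$ approximation of $(a,b)$ and spend $n(r-t)\leq \frac{2n\ve}{\delta}r+O(\log r)$ further bits refining to precision $r$); with that adjustment the argument goes through exactly as in the paper's reference.
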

    \noindent This is just Lemma 6 in \cite{lutz2020bounding} modified for hyperplanes. Since the proof of this lemma is essentially the same as the proof  in \cite{lutz2020bounding}; we omit it.
    
     Finally, we will need a ``geometric'' lemma that bounds the complexity of other hyperplanes that intersect a given hyperplane $(a, b)$ at a particular point on it.  

\begin{lem}\label{lem:intersectionGeo}
    Let $n\geq 2$ and a hyperplane $(a, b)=(a_1, ..., a_{n-1}, b)$ be given and assume $(u, v)=(u_1, ..., u_{n-1}, v)$ is a hyperplane such that $a\cdot x + b= u\cdot x + v$. Let $t\leq r$ be the largest precision up to which $(a, b)$ and $(u, v)$ agree. Then for every oracle $A$, 
    \begin{equation*}
        K_r^A(u, v)\geq K_t^A(a, b) + \max\{K^A_{r-t, r}(x\mid a, b) - (n-2)(r-t), 0\}- O(\log r)
    \end{equation*}
    where the implicit constant can depend on $x, a$, and $b$. 
\end{lem}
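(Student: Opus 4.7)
The plan is a chain of four inequalities combining symmetry of information with a direct geometric argument about the direction of $(u,v)-(a,b)$, without needing to explicitly invoke the two-dimensional case. Assume $(u,v)\neq(a,b)$, else the bound is trivial. Let $\bar{a} = u - a$ and $\bar{b} = v - b$; the condition $u\cdot x + v = a\cdot x + b$ gives $\bar{a}\cdot x + \bar{b} = 0$, and in particular $\bar{a}\neq 0$. Set $e = \bar{a}/|\bar{a}|\in S^{n-2}$ and $\xi = e\cdot x$. A direct computation yields $|(u,v)-(a,b)| = |\bar{a}|\sqrt{1+\xi^{2}} = 2^{-t}$, and shows that the unit vector $\mathbf{d}/|\mathbf{d}| = (e,-\xi)/\sqrt{1+\xi^{2}}$ is computably equivalent to the pair $(e,\xi)\in S^{n-2}\times\R$.

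First, by the symmetry of information (Proposition~\ref{prop:symmetry}) and the fact that any $2^{-r}$-approximation of $(u,v)$ is also a $2^{-t + O(1)}$-approximation of $(a,b)$,
\[
K_r^A(u,v) \;\geq\; K_t^A(a,b) + K_{r,t}^A((u,v)\mid(a,b)) - O(\log r),
\]
which already handles the case $\max = 0$. Raising conditional precision can only decrease complexity, so $K_{r,t}^A((u,v)\mid(a,b)) \geq K_{r,r}^A((u,v)\mid(a,b))$. A Turing machine given $(a,b)$ and $(u,v)$ each to precision $r$ can compute $\mathbf{d}$ to precision $r-1$ and hence $\mathbf{d}/|\mathbf{d}|$ to precision $r - t + O(1)$, yielding $K_{r,r}^A((u,v)\mid(a,b)) \geq K_{r-t,r}^A(\mathbf{d}/|\mathbf{d}|\mid a,b) - O(\log r)$. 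Finally, the chain-rule bound $K_{r-t,r}^A(x\mid a,b) \leq K_{r-t,r}^A(e,\xi\mid a,b) + K_{r-t,r}^A(x\mid a,b,e,\xi) + O(\log r)$, combined with an argument in the style of Lemma~\ref{lem:pointsOnAPlane} showing that the constraint $x\cdot e = \xi$ confines $x$ to an $(n-2)$-dimensional affine subspace so that $K_{r-t,r}^A(x\mid a,b,e,\xi) \leq (n-2)(r-t) + O(\log r)$, gives the remaining inequality once we use that $\mathbf{d}/|\mathbf{d}|$ and $(e,\xi)$ are computably equivalent.

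The main obstacle I expect is the precision bookkeeping in this last step: showing that $(n-2)(r-t)$ bits really do suffice to specify $x$ given $(e,\xi)$ at precision $r-t$ and $(a,b)$ at precision $r$. This requires mirroring the proof of Lemma~\ref{lem:pointsOnAPlane}: a basis for $\{y:y\cdot e=0\}$ must be computed uniformly from a precision-$(r-t)$ approximation of $e$, and the resulting $(n-2)$ coordinates of $x$ in this basis must be transferred back to an approximation of $x$ without accruing more than logarithmic error. This step --- essentially the content of Lutz--Stull's Lemma~7 in spirit, generalized to slicing along an $(n-1)$-dimensional subspace orthogonal to $e$ --- is where the ``reduction to two dimensions'' mentioned in the excerpt manifests, and where the logarithmic error term picks up its dependence on $|x|$, $|a|$, and $|b|$.
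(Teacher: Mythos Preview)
Your argument is correct and takes a genuinely different, more elementary route than the paper's proof. Both begin with the same symmetry-of-information step to extract $K_t^A(a,b)$, but the paper then works in $\R^n$ with the full $(n-2)$-dimensional intersection plane $P = V + s$ of the two hyperplanes: it shows that $V$ is computable as a composition of the two hyperplane projections, and for the translation $s$ it projects onto the $2$-plane $V^\perp$, obtains two lines there, and invokes the planar Lutz--Stull intersection lemma. Your approach instead stays in the parameter space $\R^{n-1}$: the unit direction of $(u,v)-(a,b)$ is computably equivalent to the pair $(e,\xi)$ with $e\cdot x = \xi$, and this constraint confines $x$ to an affine $(n-2)$-plane in $\R^{n-1}$, at which point a direct application of Lemma~\ref{lem:pointsOnAPlane} (in $\R^{n-1}$) finishes. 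This avoids the detour through $V^\perp$ and the two-dimensional reduction entirely, and it never touches the affine-Grassmannian formalism for the intersection plane.

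One small correction to your self-assessment: the last step you flag as the ``main obstacle'' is not, in fact, ``essentially the content of Lutz--Stull's Lemma~7''. That lemma computes the intersection point of two planar lines from $r$-approximations of the lines, and is precisely what the paper uses to recover $s$; your argument never needs it. What you actually need is exactly Lemma~\ref{lem:pointsOnAPlane} applied in $\R^{n-1}$, and the precision bookkeeping is no harder than in that lemma's proof: from a precision-$(r-t)$ approximation of $e$ one has the projection matrix $I - ee^{\mathrm{T}}$ to the same precision, so choosing a coordinate $(n-2)$-plane on which this projection is bi-Lipschitz and encoding the $(n-2)$ coordinates of $x$ in that frame costs $(n-2)(r-t) + O(\log r)$ bits. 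The dependence of the implicit constant on $|x|$ enters through $|\xi|\le |x|$, exactly as you anticipate.
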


This is a generalization of Lemma 7 from \cite{lutz2020bounding}, and the idea is relatively straightforward. If the hyperplanes $(a, b)$ and $(u, v)$ differ by roughly $2^{-t}$, the symmetry of information allows us to extract the first term on the right hand side of the inequality. Furthermore, the directions $a$ and $u$ also have to differ by roughly $2^{-t}$ (up to a multiplicative constant depending on the norm of $x_1, ..., x_{n-1}$). Hence, the $2^{-r}$-neighborhoods of the hyperplanes intersect in (very roughly) a prism of dimensions $2^{-(r-t)}\times 2^{-r}\times 1\times ... \times 1$. So, $2^{-r}$ approximations of both hyperplanes and at most $(n-2)(r-t)$ bits of additional information determine a precision $(r-t)$ approximation of any point on their intersection. 

It seems possible to prove Lemma \ref{lem:intersectionGeo} directly, but we elect for a somewhat more streamlined proof that heavily leverages the symmetry of information and some of the new tools in Section 3 to gradually reduce the problem to the two-dimensional case handled by N. Lutz and Stull \cite{lutz2020bounding}.

\begin{proof}
  Throughout this proof, we will consider pairs $(a, b)$ and $(u, v)$ to be one object, meaning an expression like $K^A_{r, t}(u, v\mid a, b)$ indicates the difficulty of computing the pair $(u, v)$ to precision $r$ given the pair $(a, b)$ to precision $t$. By the symmetry of information and the definition of $t$,
\begin{align*}
    K^A_r(u, v)&= K^A_t(u, v) + K^A_{r, t}(u, v\mid u, v) - O(\log r)\\
    &= K^A_t(a, b) + K^A_{r, t}(u, v\mid a, b) - O(\log r)\\
    &\geq K^A_t(a, b) + K^A_r(u, v\mid a, b) - O(\log r).
\end{align*}
The second term on the right hand side is certainly nonnegative, so it remains to show 
\begin{equation}\label{eq:geoLemmaMain1}
K^A_r(u, v\mid a, b)\geq K^A_{r-t, r}(x\mid a, b)-(n-2)(r-t) - O(\log r).
\end{equation}
Let $P=V+s$ denote the $(n-2)$-plane determined by the intersection of $(a, b)$ and $(u, v)$. Since $x$ is on $P$, applying Lemma \ref{lem:pointsOnAPlane} we see that \eqref{eq:geoLemmaMain1} follows if
\begin{equation*}
    K_r^A(u, v\mid a, b) + K_{r-t}^A(x\mid P) \geq K^A_{r-t, r}(x\mid a, b) + O(\log r).
\end{equation*}
Our main remaining task will be to prove that approximations of our hyperplanes determine approximations of their intersections. More precisely, we will show that
\begin{equation}\label{eq:hyperplanesDetermineIntersection}
    K^A_{r-t, r, r}(P\mid a, b, u, v)\leq O(\log r),
\end{equation}
since with this inequality in hand,
\begin{align*}
    K_r^A(u, v\mid a, b) + K_{r-t}^A(x\mid P)&= K_r^A(u, v\mid a, b)+  K^A_{r-t, r, r}(P\mid a, b, u, v) + K_{r-t}^A(x\mid P)- O(\log r)\\
    &\geq K^A_{r-t, r}(P\mid a, b)+ K^A_{r-t}(x\mid P)- O(\log r)\\
    &\geq K^A_{r-t, r}(x\mid a, b)-O(\log r).
\end{align*}

We will use the fact that $P=V+s$ to split up and bound $K^A_{r-t, r, r}(P\mid a, b, u, v)$. Trivially, we have that
\begin{align*}
    K^A_{r-t, r, r}(P\mid a, b, u, v)&\leq K_{r, r-t, r, r}^A(V, s\mid a, b, u, v)) + O(\log r)\\
    &\leq K_{r}^A(V\mid a, b, u, v)) +  K_{r-t, r, r}^A(s\mid a, b, u, v)) +O(\log r).
\end{align*}
Hence, \eqref{eq:hyperplanesDetermineIntersection} follows if 
\begin{equation}\label{eq:hyperplanesDetermineIntersection1}
     K_{r}^A(V\mid a, b, u, v))\leq O(\log r),
\end{equation}
and 
\begin{equation}\label{eq:hyperplanesDetermineIntersection2}
     K_{r-t, r, r}^A(s\mid a, b, u, v))\leq O(\log r).
\end{equation}

As in the proof of Proposition \ref{prop:hyperplaneEquiv}, approximations of the hyperplanes $H_1, H_2\in \mathcal{G}(n, k)$ associated to $(a, b)$ and $(u, v)$ respectively can be calculated from $(a, b)$ and $(u, v)$. Hence
\begin{align*}
     K_{r}^A(V\mid a, b, u, v)) &\leq K_{r, r+1, r+1}^A(V\mid a, b, u, v))\\
     &\leq K_{r, r+1, r+1}^A(V\mid H_1, H_2)+O(\log r); 
\end{align*}
in the above, we have also employed Proposition \ref{prop:caseLutz} to slightly improve the precision. Projection onto $V$ is the composition of projection onto $H_1$ and projection onto $H_2$. Let $\bar{H}_1$ and $\bar{H}_2$ denote $2^{-(r+1)}$-approximations of $H_1$ and $H_2$ and let $\bar{V}$ denote their composition. It follows immediately from the definition of the metric on the Grassmannian that 
\begin{equation*}
    \rho(V, \bar{V}) \leq \rho(H_1, \bar{H}_2) + \rho(H_1, \bar{H}_2) \leq 2^{-r}.
\end{equation*}
Since there is a Turing machine that can implement this calculation, this implies
\begin{equation*}
 K_{r, r+1, r+1}^A(V\mid H_1, H_2)\leq O(\log r)
 \end{equation*}
 and establishes \eqref{eq:hyperplanesDetermineIntersection1}.

To show \eqref{eq:hyperplanesDetermineIntersection2}, we begin similarly. Again, a precision $r$ approximation of $V$ is computable from $(a, b)$ and $(u, v)$, and applying Corollary \ref{cor:planeDeterminesComplement} to $V$ implies that
\begin{equation*}
    K_{r-t, r, r}^A(s\mid a, b, u, v))\leq K_{r-t, r, r, r}^A(s\mid a, b, u, v, V^\perp)) + O(\log r).
\end{equation*}
The projections of $(a, b)$ and $(u, v)$ onto $V^\perp$ are lines, call them $\ell_1$ and $\ell_2$ respectively. There are a number of ways to determine precision $r$ approximations $\bar{\ell}_1$ and $\bar{\ell}_2$ from precision $r$ approximations of $(a, b)$ and $(u, v)$, $\bar{V}^\perp$, and a small amount of additional information, say by sampling points on the hyperplanes and then using the matrix $\bar{V}^\perp$ to project a suitable pair for each hyperplane. Hence, 
\begin{equation*}
    K_{r-t, r, r}^A(s\mid a, b, u, v))\leq K_{r-t, r, r, r}^A(s\mid \ell_1, \ell_2, V^\perp)) + O(\log r).
\end{equation*}
Let $e$ be a standard coordinate $2$-planes such that that the projection map from $V^\perp$ to $e$ has maximal Lipschitz constant. Since $\pi_e$ is computable, and a precision $r$ approximation of $V^\perp$ is enough to recover a precision $r$ approximation of $s$ from $\pi_e s$, we have that
\begin{align*}
    K_{r-t, r, r, r}^A(s\mid \ell_1, \ell_2, V^\perp))&\leq K_{r-t, r, r, r}^A(s\mid \pi_e\ell_1, \pi_e\ell_2, V^\perp)) + O(\log r)\\
    &\leq K_{r-t, r, r}^A(\pi_es\mid \pi_e\ell_1, \pi_e\ell_2)) + O(\log r).
\end{align*}
Note that, at this point, we are working entirely in $\mathbb{R}^2$. The main content of Lemma 7 in \cite{lutz2020bounding} is that precision $r$ approximations of two lines in $\mathbb{R}^2$ are sufficient to compute a precision $r-t^\prime$ approximation of their intersection, where $2^{-t^\prime}$ is the approximate distance between the lines. So, it remains only for us to show that $\rho(\pi_e\ell_1, \pi_e\ell_2)\geq C 2^{-t}$ for some $C>0$ depending only on $(a, b)$ and the norm of $x$. 

Recall that $\vert (a, b) - (u, v)\vert\geq 2^{-t}$. There is some $C_1>0$ such that $\rho(\ell_1, ell_2)\geq C_1 2^{-t}$, since $V^\perp$ being orthogonal to $(a, b)$ and $(u, v)$ implies the angle between $\ell_1$ and $\ell_2$ is the same as the angle between $(a, b)$ and $(u, v)$. Furthermore, our choice of $e$ guarantees that there exists some $C_2$ such that $\rho(\pi_e\ell_1, \pi_e\ell_2)\geq C_2 \rho(\ell_1, \ell_2)$. Hence, taking $C=C_1 C_2$ and using Lemma 7 of \cite{lutz2020bounding} implies
\begin{align*}
    K_{r-t, r, r}^A(\pi_es\mid \pi_e\ell_1, \pi_e\ell_2))&\leq K_{r-t+C, r, r}^A(\pi_es\mid \pi_e\ell_1, \pi_e\ell_2))+O(\log r)\\
    &\leq O(\log r),
\end{align*}
which establishes \eqref{eq:hyperplanesDetermineIntersection2} and completes the proof.

\end{proof}

It remains only to use these lemmas to prove Proposition \ref{prop:everyPrecisionBound}, since we have already seen that this inequality implies Theorem \ref{thm:optimalExtension} and Theorem \ref{thm:unionHyperplanes}. Modulo the new geometric lemma, this proof is almost identical to the ``choosing an oracle'' and ``lower bound'' portions of \cite{bushling2025extension}; we include it (in a condensed form) for the purpose of completeness.

\begin{proofof}{Proposition \ref{prop:everyPrecisionBound}}

    Let $\ve>0$ and $r$ sufficiently large be given. Recalling the definition of $c_r$, we will choose $0\leq \eta<1$ such that $\eta =\frac{i}{2^m}$ for $m:=2 - \lceil \log(\ve) \rceil$ and some integer $i$. Letting $d_r:=\frac{r - c_r + K^A(a, b)_{c_r}}{r}$, we pick $\eta$ such that $d_r- 2\sqrt{\ve}<\eta<d_r- \sqrt{\ve}$ if it is possible to do so respecting the previous requirement, and $\eta = 0$ otherwise. Choose the oracle $D:=D(A, n, r, (a, b), \eta)$ as in Lemma \ref{lem:oracleD}. Working relative to this oracle, we have the following two inequalities, which are (4.4) and (4.5) from \cite{bushling2025extension}:
    \begin{equation}\label{eq:boundAtR}
        K^{A, D}_r(a, b)\geq K^A_{c_r}(a, b) + (r - c_r) - 2\sqrt{\ve} r  - O(\log r),
    \end{equation}
    and for $s\in[1, r]$,
    \begin{equation}\label{eq:betterThanTeal}
        K^{A, D}_s(a, b)\geq K^{A, D}_r(a, b) - (1 - \sqrt{\ve})(r - s) - O(\log r).
    \end{equation}
The second inequality, in particular, will allow us to apply Lemma \ref{lem:enumeration} relative to $(A, D)$. The first condition of this lemma is satisfied for sufficiently large $r$ by the definition of the oracle $D$. To see that the second is satisfied, let $(u, v)$ be given. Applying Lemma \ref{lem:intersectionGeo}, we have that
\begin{equation*}
    K_r^{A, D}(u, v)\geq K_t^{A, D}(a, b) + K^{A, D}_{r-t, r}(x\mid a, b) - (n-2)(r-t)- O(\log r),
\end{equation*}
where $t$ is as in the statement of the lemma. The properties of the oracle $D$ and \eqref{eq:betterThanTeal} ensure that
\begin{equation*}
    K_r^{A, D}(u, v)\geq K^A_r(a, b) + (1 - \sqrt{\ve})(r - t) + K_{r - t}^{A, a, b}(x) - (n-2)(r-t) - O(\log r).
\end{equation*}

Recall that $x$ is an $(n-1)$-dimensional object which we assumed was almost random relative to $(A, a, b)$. Provided that $r$ is sufficiently large, then, we have
\begin{equation*}
    K_r^{A, D}(u, v)\geq K^A_r(a, b) + (1 - \sqrt{\ve})(r - t) + (1 - \frac{\ve}{2})(r-t) - O(\log r).
\end{equation*}
which, using the definition of $\eta$, implies
\begin{equation*}
    K_r^{A, D}(u, v)\geq (\eta - \ve) r + \sqrt{\ve}(r - t).
\end{equation*}
This allows us to apply Lemma \ref{lem:enumeration}, yielding
\begin{equation}\label{eq:afterEnumeration}
     K^{A, D}(x, a\cdot x + b)\geq K^{A, D}_r(x, a, b) - 2 n \sqrt{\ve} r - O(\log r).
\end{equation}
Using the symmetry of information and the assumption on $x$, 
\begin{equation*}
    K^{A, D}(x, a, b)\geq (n - 1 - \sqrt{\ve}) r + K_r^{A, D}(a, b) - O(\log r).
\end{equation*}
Combining the above with \eqref{eq:boundAtR} and \eqref{eq:afterEnumeration} for sufficiently large $r$, we have
\begin{align*}
K^A(x, a\cdot x + b)&\geq K^{A, D}(x, a\cdot x + b)+O(\log r)\\
    &\geq (n - 1) r + K^A_{c_r}(a, b) + (r - c_r) - C\sqrt{\ve} r,
\end{align*}
for $C:=(2n + 3)$. \qed 
\end{proofof}

\section{Acknowledgments}
I am very grateful to Betsy Stovall, Don Stull, Patrick Lutz, and Paige Bright for feedback on an earlier draft of this paper. Additionally, I would like to thank Don Stull for helpful conversations about some of the material, particularly in Section 3.

\bibliographystyle{amsplain}
\bibliography{references}

\providecommand{\bysame}{\leavevmode\hbox to3em{\hrulefill}\thinspace}
\providecommand{\MR}{\relax\ifhmode\unskip\space\fi MR }
% \MRhref is called by the amsart/book/proc definition of \MR.
\providecommand{\MRhref}[2]{%
  \href{http://www.ams.org/mathscinet-getitem?mr=#1}{#2}
}
\providecommand{\href}[2]{#2}
\begin{thebibliography}{10}

\bibitem{altaf2023distance}
Iqra Altaf, Ryan Bushling, and Bobby Wilson, \emph{{Distance Sets Bounds for Polyhedral Norms via Effective Dimension}}, Real Analysis Exchange \textbf{50} (2025), no.~1, 1 -- 13.

\bibitem{bourgain1991nkbesicovitch}
Jean Bourgain, \emph{Besicovitch type maximal operators and applications to fourier analysis}, Geometric \& Functional Analysis GAFA \textbf{1} (1991), no.~2, 147--187.

\bibitem{bright2025spreadfurstenbergsets}
Paige Bright and Manik Dhar, \emph{Spread furstenberg sets}, arXiv preprint 2412.18193 (2024).

\bibitem{Bushling24Projections}
Ryan E.~G. Bushling, \emph{Exceptional projections of sets exhibiting almost dimension conservation}, Annales Fennici Mathematici \textbf{49} (2024), no.~1, 65–79.

\bibitem{bushling2025extension}
Ryan E.~G. Bushling and Jacob~B. Fiedler, \emph{Bounds on the dimension of lineal extensions}, J.~Fractal Geom. \textbf{12} (2025), no.~1/2, 105–--133.

\bibitem{case2015dimension}
Adam Case and Jack~H. Lutz, \emph{Mutual dimension}, ACM Trans.~Comput.~Theory \textbf{7} (2015), no.~3, 1--26.

\bibitem{dabrowski2022}
Damian Dabrowski, Tuomas Orponen, and Michele Villa, \emph{Integrability of orthogonal projections, and applications to furstenberg sets}, Advances in Mathematics \textbf{407} (2022), 108567.

\bibitem{downey2010}
Rodney~G. Downey and Denis~R. Hirschfeldt, \emph{Algorithmic randomness and complexity}, Springer, New York, 2010.

\bibitem{FalconerHowroyd1996}
K.~J. Falconer and J.~D. Howroyd, \emph{Projection theorems for box and packing dimensions}, Mathematical Proceedings of the Cambridge Philosophical Society \textbf{119} (1996), no.~2, 287–295.

\bibitem{falconer2016strong}
Kenneth~J. Falconer and Pertti Mattila, \emph{Strong marstrand theorems and dimensions of sets formed by subsets of hyperplanes}, J.~Fractal Geom. \textbf{3} (2016), no.~4, 319–--329.

\bibitem{fraser2024variantsfurstenbergsetproblem}
Jonathan~M. Fraser, \emph{On variants of the furstenberg set problem}, arXiv preprint 2409.03678 (2024).

\bibitem{gan2023hausdorffdimensionunionskplanes}
Shengwen Gan, \emph{Hausdorff dimension of unions of $k$-planes}, arXiv preprint 2305.14544 (2023).

\bibitem{henkel2003}
Oliver Henkel, \emph{Sphere-packing bounds in the grassmann and stiefel manifolds}, IEEE Transactions on Information Theory \textbf{51} (2005), no.~10, 3445--3456.

\bibitem{heraKeleti2019hausdorff}
Korn\'{e}lia H\'{e}ra, Tam\'{a}s Keleti, and Andr\'{a}s M\'{a}th\'{e}, \emph{Hausdorff dimension of unions of affine subspaces and of {F}urstenberg-type sets}, J.~Fractal Geom. \textbf{6} (2019), no.~3, 263--284.

\bibitem{Hera2019}
Kornélia Héra, \emph{Hausdorff dimension of furstenberg-type sets associated to families of affine subspaces}, Annales Fennici Mathematici \textbf{44} (2019), no.~2, 903–923.

\bibitem{keleti2016lines}
Tam\'{a}s Keleti, \emph{Are lines much bigger than line segments?}, Proc.~Amer.~Math.~Soc. \textbf{144} (2016), no.~4, 1535--1541.

\bibitem{keleti2022equivalences}
Tam\'{a}s Keleti and Andr\'{a}s M\'{a}th\'{e}, \emph{Equivalences between different forms of the {K}akeya conjecture and duality of {H}ausdorff and packing dimensions for additive complements}, arXiv preprint arXiv:2203.15731 (2022).

\bibitem{lacombe1959metric}
Daniel Lacombe, \emph{Quelques proc{\'e}d{\'e}s de d{\'e}finition en topologie r{\'e}cursive}, Constructivity in {Mathematics}, {Proc}. {Colloq}. {Amsterdam} 1957, 129-158 (1959)., 1959.

\bibitem{lai2020simplergrassmannianoptimization}
Zehua Lai, Lek-Heng Lim, and Ke~Ye, \emph{Simpler grassmannian optimization}, arXiv preprint 2009.13502 (2020).

\bibitem{Lutz03a}
Jack~H. Lutz, \emph{Dimension in complexity classes}, {SIAM} J. Comput. \textbf{32} (2003), no.~5, 1236--1259.

\bibitem{Lutz03b}
\bysame, \emph{The dimensions of individual strings and sequences}, Inf. Comput. \textbf{187} (2003), no.~1, 49--79.

\bibitem{lutz2018algorithmic}
Jack~H. Lutz and Neil Lutz, \emph{Algorithmic information, plane {K}akeya sets, and conditional dimension}, ACM Trans.~Comput.~Theory \textbf{10} (2018), no.~2, 1--22.

\bibitem{LuLuMay2023PtS}
Jack~H. Lutz, Neil Lutz, and Elvira Mayordomo, \emph{Extending the reach of the point-to-set principle}, Information and Computation \textbf{294} (2023), 105078.

\bibitem{LutStu18Projections}
Neil Lutz and D.~M. Stull, \emph{Projection theorems using effective dimension}, 43rd International Symposium on Mathematical Foundations of Computer Science (MFCS 2018), 2018.

\bibitem{lutz2020bounding}
Neil Lutz and Donald~M. Stull, \emph{Bounding the dimension of points on a line}, Inform.~and Comput. \textbf{275} (2020), 104601.

\bibitem{Mayordomo02}
Elvira Mayordomo, \emph{A {K}olmogorov complexity characterization of constructive {H}ausdorff dimension}, Inf. Process. Lett. \textbf{84} (2002), no.~1, 1--3.

\bibitem{molter2012furstenberg}
Ursula Molter and Ezequiel Rela, \emph{Furstenberg sets for a fractal set of directions}, Proc.~Amer.~Math.~Soc. \textbf{140} (2012), no.~8, 2753--2765.

\bibitem{doberlin2007hyperplanes}
Daniel Oberlin, \emph{Unions of hyperplanes, unions of spheres, and some related estimates}, Illinois Journal of Mathematics \textbf{51} (2007), no.~4, 1265--1274.

\bibitem{doberlin2014kplanes}
\bysame, \emph{Exceptional sets of projections, unions of k-planes and associated transforms}, Israel Journal of Mathematics \textbf{202} (2014), no.~1, 331--342.

\bibitem{roberlin2007nkBesicovitch}
Richard Oberlin, \emph{Bounds for kakeya-type maximal operators associated with k-planes}, Mathematical Research Letters \textbf{14} (2007), no.~1, 87--97.

\bibitem{orponen2015exceptional}
Tuomas Orponen, \emph{On the packing dimension and category of exceptional sets of orthogonal projections}, Ann.~Mat.~Pura Appl. \textbf{194} (2015), no.~3, 843--880.

\bibitem{orponen2023projectionsfurstenbergsetsabc}
Tuomas Orponen and Pablo Shmerkin, \emph{Projections, furstenberg sets, and the $abc$ sum-product problem}, arXiv preprint 2301.10199 (2023).

\bibitem{ren2023furstenberg}
Kevin Ren and Hong Wang, \emph{Furstenberg sets estimate in the plane}, arXiv preprint 2308.08819 (2023).

\bibitem{stull2022pinned}
Donald~M. Stull, \emph{Pinned distance sets using effective dimension}, arXiv preprint 2207.12501 (2022).

\bibitem{wang2025volumeestimatesunionsconvex}
Hong Wang and Joshua Zahl, \emph{Volume estimates for unions of convex sets, and the kakeya set conjecture in three dimensions}, arXiv preprint 2502.17655 (2025).

\bibitem{weihrauch2000}
Klaus Weihrauch, \emph{Computable analysis}, Springer Berlin, Heidelberg, 2000.

\bibitem{Wolff1999RecentWC}
Thomas~H. Wolff, \emph{Recent work connected with the kakeya problem}, Prospects in mathematics: invited talks on the occasion of the 250th anniversary of Princeton University, 1999.

\bibitem{zahl2022}
Joshua Zahl, \emph{Unions of lines in $\mathbb{R}^n$}, Mathematika \textbf{69} (2023), no.~2, 473--481.

\end{thebibliography}

 \end{document}